\theoremstyle{plain}
\newtheorem{theorem}{Theorem}[section]
\newtheorem{proposition}[theorem]{Proposition}
\newtheorem{lemma}[theorem]{Lemma}
\newtheorem{corollary}[theorem]{Corollary}
\theoremstyle{definition}
\newtheorem{assumption}[theorem]{Assumption}
\theoremstyle{remark}
\newtheorem{remark}[theorem]{Remark}
\newtheorem{fact}{Fact}
\newtheorem{example}{Example}
\DeclareMathOperator*{\argmin}{argmin}
\DeclareMathOperator{\Graph}{Gra}
\DeclareMathOperator{\Zeros}{Zer}
\DeclareMathOperator{\Fixpoint}{Fix}
\DeclareMathOperator{\proximal}{prox}
\newcommand{\notshow}[1]{{}}
\newcommand{\AutoAdjust}[3]{{ \mathchoice{ \left #1 #2  \right #3}{#1 #2 #3}{#1 #2 #3}{#1 #2 #3} }}
\newcommand{\Xcomment}[1]{{}}
\newcommand{\InParentheses}[1]{\AutoAdjust{(}{#1}{)}}
\newcommand{\InBrackets}[1]{\AutoAdjust{[}{#1}{]}}
\newcommand{\InAngles}[1]{\AutoAdjust{\langle}{#1}{\rangle}}
\newcommand{\InNorms}[1]{\AutoAdjust{\|}{#1}{\|}}
\newcommand{\R}{\mathbbm{R}}
\newcommand{\X}{\mathcal{X}}
\newcommand{\Y}{\mathcal{Y}}
\newcommand{\Z}{\mathcal{Z}}
\newcommand{\half}{\frac{1}{2}}
\newcommand{\boldzero}{\boldsymbol{0}}
\newcommand{\indSet}{\mathbbm{I}}
\newcommand{\real}{\mathbbm{R}}
\newcommand{\gap}{\textsc{Gap}}
\newcommand{\LHSI}{\text{LHS of Inequality}}
\newcommand{\ball}[2]{\mathcal{B}({#1},{#2})}
\newenvironment{nalign}{    
\begin{equation}
    \begin{aligned}
}{
    \end{aligned}
    \end{equation}
    \ignorespacesafterend
}
\def\+#1{\mathcal{#1}}
\def\-#1{\mathbb{#1}}
\newcommand{\Ham}{r^{tan}}
\newcommand{\CMI}{\ref{CMI}\xspace}
\newcommand{\cmark}{\ding{51}}%
\newcommand{\xmark}{\ding{55}}%
\icmltitlerunning{Accelerated Algorithms for Constrained Nonconvex-Nonconcave Min-Max Optimization and Comonotone Inclusion}
\begin{document}

\twocolumn[
\icmltitle{Accelerated Algorithms for Constrained Nonconvex-Nonconcave Min-Max Optimization and Comonotone Inclusion}



\icmlsetsymbol{equal}{*}

\begin{icmlauthorlist}
\icmlauthor{Yang Cai}{equal,yyy}
\icmlauthor{Argyris Oikonomou}{equal,yyy}
\icmlauthor{Weiqiang Zheng}{equal,yyy}
\end{icmlauthorlist}

\icmlaffiliation{yyy}{Department of Computer Science, Yale University, New Haven, USA}

\icmlcorrespondingauthor{Yang Cai}{yang,cai@yale.edu}
\icmlcorrespondingauthor{Argyris Oikonomou}{argyris.oikonomou@yale.edu}
\icmlcorrespondingauthor{Weiqiang Zheng}{weiqiang.zheng@yale.edu}

\icmlkeywords{Machine Learning, ICML, Minimax Optimization, Last-Iterate Convergence}

\vskip 0.3in
]



\printAffiliationsAndNotice{\icmlEqualContribution} 

\begin{abstract}
We study constrained comonotone min-max optimization, a structured class of nonconvex-nonconcave min-max optimization problems, and their generalization to comonotone inclusion. 
In our first contribution, we extend the \emph{Extra Anchored Gradient} (EAG) algorithm, originally proposed by~\citet{yoon2021accelerated} for unconstrained min-max optimization, to constrained comonotone min-max optimization and comonotone inclusion, achieving an optimal convergence rate of $O\left(\frac{1}{T}\right)$ among all first-order methods. Additionally, we prove that the algorithm's iterations converge to a point in the solution set. In our second contribution, we extend the \emph{Fast Extra Gradient} (FEG) algorithm, as developed by \citet{lee2021fast}, to constrained comonotone min-max optimization and comonotone inclusion, achieving the same $O\left(\frac{1}{T}\right)$ convergence rate. This rate is applicable to the broadest set of comonotone inclusion problems yet studied in the literature. Our analyses are based on simple potential function arguments, which might be useful for analyzing other accelerated algorithms. 
\end{abstract}

\section{Introduction}\label{intro}
Min-max optimization is a cornerstone in game theory, optimization, and online learning. While classical theoretical studies have primarily focused on the convex-concave case,\footnote{For closed convex sets $\X$ and $\Y$, and a smooth function $f(\cdot,\cdot)$, the corresponding min-max optimization problem is formulated as:
$
    \min_{x\in \X} \max_{y\in \Y} f(x,y),
$. A min-max optimization problem is convex-concave when additionally, function $f(\cdot,\cdot)$ is convex in $x$ and concave $y$.}   there has been growing interest in nonconvex-nonconcave min-max optimization problems within the machine learning and optimization community. This surge in attention is attributed to a range of innovative applications of
nonconvex-nonconcave min-max such as generative adversarial networks (GANs) (see~\citep{goodfellow_generative_2014,arjovsky_wasserstein_2017}), adversarial examples (see~\citep{madry_towards_2017}), robust optimization (see~\citep{ben-tal_robust_2009}), and reinforcement learning (see~\citep{du_stochastic_2017,dai_sbeed_2018}). 

Unfortunately, even finding a first-order stationary point is generally intractable for nonconvex-nonconcave min-max optimization problems~ \citep{daskalakis2021complexity}. In light of this obstacle, recent research has shifted focus towards classes of nonconvex-nonconcave min-max optimization problems that exhibit more structure. One class that has  received extensive attention is the comonotone min-max optimization problem proposed by \citet{bauschke2021generalized} (see \Cref{ex:minmax} for the definition), which captures the convex-concave setting as a special case. 

During the past few years, we have witnessed a series of exciting advancements in comonotone min-max optimization~\citep{diakonikolas2021efficient,pethick2022escaping,lee2021fast,yoon2022accelerated,InIterate23,Z23-ogda,Z23-single,tran2023extragradient,GorbunovTHG23}. However, the algorithms proposed in these studies often exhibit one or more of the following  limitations: (a) they suffer a \emph{sub-optimal} convergence rate, (b) they only apply to the \emph{unconstrained setting} or (c) they \emph{lack point convergence guarantees}.\footnote{An algorithm has point convergence if its iterates has a limit.} Addressing these challenges, the primary goal of our paper is to design algorithms that simultaneously surmount all three of these limitations. More specifically, we seek to answer the following question:

\smallskip\noindent
\null\hfill
\begin{minipage}{0.45\textwidth}
\textbf{Question:} Can we design algorithms for comonotone min-max optimization that have (i) the \textbf{optimal convergence rate} in the \textbf{constrained setting} and (ii) \textbf{point convergence} to the solution set?
\end{minipage}
\hfill\null
\smallskip

We provide an affirmative answer to this question. In particular, we design algorithms that exhibit the optimal convergence rate of $O(1/T)$ in the constrained setting for comonotone min-max optimization, and our algorithms are guaranteed to converge to a point within the solution set. See \Cref{tab:table} for comparison between our algorithms and the ones proposed in the literature. Indeed, our findings extend to the more general problem of \emph{Comonotone  Inclusion}. More specifically, we consider composite inclusion problems that involve a single-valued and Lipschitz continuous operator $F: \mathbb{R}^n \rightarrow \mathbb{R}^n$, and a maximally monotone set-valued operator $A: \mathbb{R}^n \rightarrow \mathcal{P}(\mathbb{R}^n)$.\footnote{The powerset of $\mathbb{R}^n$ is denoted as $\mathcal{P}(\mathbb{R}^n)$.} The goal is to find a point $z^*\in \real^n$ such that $\boldzero \in F(z^*)+ A(z^*)$. If the composite operator satisfies the $\rho$-comonotonicity condition:
\begin{align*}
    &\InAngles{u - u',z-z'} \ge \rho \InNorms{u-u'}^2, \\
     &\quad\forall z, z' \in \mathbb{R}^n \text{ and } \forall u \in F(z)+A(z), u' \in F(z') + A(z'),
\end{align*}
we refer to these problems as Comonotone Inclusion Problems (\CMI). It is not hard to see that both the unconstrained comonotone min-max optimization and the monotone variational inequality fall under the umbrella of \CMI. The general formulation of \CMI enables us to capture more complex settings, such as constrained or non-smooth min-max optimization. For a more detailed discussion, see \Cref{sec:prelim}.

\notshow{\footnote{Given a $\rho$-comonotone min-max optimization problem on convex sets $\X$ and $\Y$, if we set $\Z = \X\times \Y$,  and $F(x,y) = \begin{pmatrix}
  \nabla_x f(x,y)\\
  -\nabla_y f(x,y)
\end{pmatrix}$, then (i) $F(x,y)$ is a Lipschitz and $\rho$-comonotone operator, and (ii) the set of saddle points coincide with the solutions to Comonotone Inclusion Problem~\CMI problem for operators $F$ and $A = \partial \indSet_\Z$ (where $\partial \indSet_\Z$ is the subdifferential operator of  the the indicator function for set $\Z$, $\indSet_\Z(\cdot)$). }}

\subsection{Our Contributions}\label{sec:contribution}
In this paper, we evaluate the quality of a solution by measuring the norm of the composite operator. Specifically, a point $z\in\real^n$ is an $\epsilon$-approximate solution if $ \boldzero\in F(z)+A(z)+\ball{\boldzero}{\epsilon},$ where $\ball{\boldzero}{\epsilon}$ is the ball with radius $\epsilon$ centered at $\boldzero$. As discussed in  Section~\ref{sec:convergence criteria}, this criterion is equivalent to the tangent residual of $z$, a notion introduced in~\cite{cai2022finite}, being no more than $\epsilon$.

First, we extend the Extra Anchored Gradient algorithm \eqref{composite-EAG}, originally proposed by~\citet{yoon2021accelerated} for unconstrained convex-concave min-max problems, to solve $\rho$-comonotone composite inclusion problems. 

\smallskip\noindent
\hspace{.15in}\begin{minipage}{0.45\textwidth}\textbf{Contribution 1:}  We show that the $T$-th iterate of \eqref{composite-EAG} is an $O(\frac{1}{T})$-approximate solution to \CMI if (i) the single-valued operator $F$ is $L$-Lipshitz, (ii) the set-valued operator $A$ is maximally monotone, and (iii) the composite operator $F+A$ is $\rho>-\frac{1}{20L}$-comonotone. Additionally, our algorithm ensures point convergence to a point within the solution set when condition (i), (ii) and (iii) hold.
\end{minipage}
\smallskip
\noindent 

Next, we extend the Fast Extra-Gradient (FEG) algorithm~\citet{lee2021fast} originally designed for unconstrained comonotone convex-concave min-max optimization to comonotone inclusion problems. 
\smallskip\noindent
\hspace{.15in}\begin{minipage}{0.45\textwidth}\textbf{Contribution 2:} We show that the $T$-th iterate of \eqref{composite-FEG} is an $O(\frac{1}{T})$-approximate solution to \CMI if (i) the single-valued operator $F$ is $L$-Lipschitz, (ii) the set-valued operator $A$ is maximally monotone, and (iii) the composite operator $F+A$ is $\rho>-\frac{1}{2L}$-comonotone. Additionally, our algorithm is guaranteed point convergence if (i) and (ii) hold, and the composite operator $F+A$ is monotone, i.e., $0$-comonotone.
\end{minipage}

Note that the convergence rate of our algorithms \ref{composite-EAG} and \ref{composite-FEG} matches the lower bound by \cite{diakonikolas2020halpern,yoon2021accelerated}, and is therefore optimal for any first-order method. Some further remarks are in order.
\begin{itemize}[leftmargin=*]
    \item Our algorithms \ref{composite-EAG} and \ref{composite-FEG} are the \emph{first} first-order algorithms with optimal \(O\left(\frac{1}{T}\right)\) convergence rates for the constrained and negatively comonotone setting. We remark that \citet{kovalev2022first} concurrently propose \ref{composite-EAG} and prove its convergence in the constrained and monotone setting.
    \item Our algorithm~\ref{composite-FEG} achieves the optimal convergence rate in the constrained and negatively monotone setting with \(\rho > -\frac{1}{2L}\), which represents the widest range of \(\rho\) values among all single-loop algorithms.\footnote{Very recently, some works propose algorithms that work for a wider range of $\rho > -\frac{1}{L}$ in the unconstrained~\citep{cai2024variance, fan2024weaker} and constrained setting~\cite{alacaoglu2024extending}. However, these algorithms are \emph{double-loop} and are sub-optimal by $\log T$ in terms of the convergence rates. It is an open question to design a single-loop algorithm that works for $\rho > -\frac{1}{L}$ with no spurious log factors.}
    \item We are the \emph{first} to provide first-order methods that achieve both optimal convergence rates and point convergence in the constrained and negatively comonotone setting.
\end{itemize}

\begin{table*}[t]
\centering
\caption{Existing results for comonotone inclusion problem. The convergence rate is in terms of the operator norm (in the unconstrained setting) and the residual (in the constrained setting).  ($\dagger$): The $O(\frac{1}{\sqrt{T}})$ convergence rates hold under the weak Minty assumption, which is implied by the comonotonicity assumption.}
\label{tab:table}

\begin{tabular}{@{}ccccc@{}}
\toprule
Algorithm &
  Rate &
  Range of $\rho$ &
  Constraints &
  Point Convergence \\ \midrule
\begin{tabular}[c]{@{}c@{}}EG+~\citep{diakonikolas2021efficient}\\ \citep{GorbunovTHG23}$\dagger$\end{tabular} &
  \cellcolor[HTML]{C0C0C0}$O\InParentheses{\frac{1}{\sqrt{T}}}$ &
  $\left(- \frac{1}{2L},+\infty\right)$ &
  \cellcolor[HTML]{C0C0C0}\xmark &
  \cmark \\ \midrule
CEG+~\citep{pethick2022escaping,stablencnc}$\dagger$ &
  \cellcolor[HTML]{C0C0C0}$O\InParentheses{\frac{1}{\sqrt{T}}}$ &
  $\left(- \frac{1}{2L},+\infty\right)$ &
  \cmark &
  \cmark \\ \midrule
\begin{tabular}[c]{@{}c@{}}Double-Loop Halpern~\citep{diakonikolas2020halpern} \\ \citep{kohlenbach2022proximal}\end{tabular} &
  \cellcolor[HTML]{C0C0C0}$O(\frac{\log\InParentheses{T}}{T})$ &
  $ (-\frac{1}{2L},+\infty]$ &
  \cmark &
  \cellcolor[HTML]{C0C0C0}Unknown \\ \midrule
\text{APG}$^*$~\citep{yoon2022accelerated} &
  \cellcolor[HTML]{C0C0C0}$O(\frac{\log\InParentheses{T}}{T})$ &
  \cellcolor[HTML]{C0C0C0}$ [0,+\infty]$ &
  \cmark &
  \cmark \\ \midrule 
EAG~\citep{yoon2021accelerated} &
  $O(\frac{1}{T})$ &
  \cellcolor[HTML]{C0C0C0}$ [0,+\infty]$ &
  \cellcolor[HTML]{C0C0C0}\xmark &
  \cmark \\ \midrule
FEG~\citep{lee2021fast} &
  $O(\frac{1}{T})$ &
  $\left(- \frac{1}{2L},+\infty\right)$ &
  \cellcolor[HTML]{C0C0C0}\xmark &
  \cmark (for $\rho\geq 0)$ \\ \midrule
ARG~\citep{Z23-single} &
  $O(\frac{1}{T})$ &
  $\left[- \frac{1}{60L},+\infty\right)$ &
  \cmark &
  \cellcolor[HTML]{C0C0C0}Unknown \\ \midrule
AOG~\citep{Z23-ogda} &
  $O(\frac{1}{T})$ &
  \cellcolor[HTML]{C0C0C0}$ [0,+\infty]$ &
  \cmark &
  \cellcolor[HTML]{C0C0C0}Unknown \\ \midrule
fOGDA~\citep{InIterate23} &
  $O(\frac{1}{T})$ &
  \cellcolor[HTML]{C0C0C0}$ [0,+\infty]$ &
  \cmark &
  \cmark \\ \midrule \midrule
\ref{composite-EAG}~[{\bf This paper}] &
  $O(\frac{1}{T})$ &
  $\left[- \frac{1}{20L},+\infty\right)$ &
  \cmark &
  \cmark \\ \midrule
\ref{composite-FEG}~[{\bf This paper}] &
  $O(\frac{1}{T})$ &
  $\left(- \frac{1}{2L},+\infty\right)$ &
  \cmark &
  \cmark (for $\rho\geq 0)$ \\ \bottomrule
\end{tabular}%

\end{table*}

\subsection{Related Work}\label{sec:related work}
There is a vast literature on {inclusion problems} and variational inequalities. See~\citep{facchinei_finite-dimensional_2003,bauschke_convex_2011,ryu2016primer} and the references therein. We provide a brief discussion of the most relevant results here and postpone the rest to \Cref{apx:related work}.

\paragraph{Accelerated Convergence Rate for Convex-Concave Min-Max Optimization and Monotone Variational Inequalities.} We overview results that achieve the $O(\frac{1}{T})$ accelerated convergence rate in terms of the operator norm or residual. Note that these results imply $O(\frac{1}{T})$ convergence rate in terms of the gap function. We describe these results in the language of inclusion problems. 

Recent results show accelerated rates through Halpern iteration~\citep{halpern1967fixed} or a similar mechanism -- anchoring. Implicit versions of Halpern iteration have $O(\frac{1}{T})$ convergence rate~\citep{kim_accelerated_2021,lieder2021convergence,park2022exact} for monotone operators and explicit variants of Halpern iteration achieve the same convergence rate when $F$ is cocoercive and $A$ is the subdifferential of the indicator function of the feasible set~\citep{diakonikolas2020halpern,kim_accelerated_2021}.
\citet{diakonikolas2020halpern} also provides a double-loop implementation of the algorithm for monotone operators at the expense of an additional logarithmic factor in the convergence rate.    
\citet{yoon2021accelerated} proposes the extra anchored gradient (EAG) method, which is the first explicit method with an accelerated $O(\frac{1}{T})$ rate in the unconstrained setting for monotone operators, i.e., $F$ is monotone and $A=0$.\footnote{We use $A=0$ to denote the set operator that maps any point to $\emptyset$}. They also establish a matching $\Omega(\frac{1}{T})$ lower bound that holds for all first-order methods.
Convergence analysis of the past extragradient method with anchoring in the unconstrained setting is provided by \citet{tran2021halpern}. More recently, \citet{tran2022connection} studies the connection between Halpern iteration and Nesterov's accelerated method and provides new algorithms for monotone 
operators and alternative analyses for EAG in the unconstrained setting.  \citet{InIterate23} analyzes the fast optimistic gradient descent ascent (fOGDA) and shows that the algorithms achieve $O(\frac{1}{T})$ convergence rate for constrained monotone variational inequalities, i.e., $F$ is monotone and $A$ is the subdifferential of the indicator function of the feasible set.  \citet{Z23-ogda} proposes an anchored variant of optimistic gradient descent ascent (AOG) for monotone games, and their analysis extends to constrained monotone variational inequalities.

\paragraph{Accelerated Convergence Rate for Comonotone Min-Max Optimization and Comonotone Inclusion.} 
For unconstrained comonotone min-max optimization problems, i.e., $F$ is comonotone and $A=0$, \citet{lee2021fast} proposes a generalization of EAG called fast extraradient (FEG), which achieves $O(\frac{1}{T})$ convergence rate. For constrained comonotone min-max optimization problems, convergence of \emph{implicit methods} such as the proximal point algorithm and the Halpern iteration has been analyzed~\citep{bauschke2021generalized,kohlenbach2022proximal}. Moreover, the explicit double-loop variants of the Halpern iteration achieves $O(\frac{\log T}{T})$ convergence rates~\citep{diakonikolas2020halpern, kohlenbach2022proximal}. \citet{Z23-single} analyzes the anchored variant of Reflected Gradient (ARG) and shows a $O(\frac{1}{T})$ fast convergence rate for a subset of comonotone inclusion problems covered by our algorithms.\footnote{See the paragraph of ``Preliminary Version of this Paper'' for discussion between their results and ours.} It is unknown whether their algorithm has point convergence.

\paragraph{Point Convergence in the Convex-Concave and Monotone Setting.} For unconstrained convex-concave min-max optimization problems, \citet{yoon2022accelerated} shows that a range of fast algorithms, namely EAG \cite{yoon2021accelerated}, FEG \citep{lee2021fast} and anchored Popov's scheme \citep{tran2021halpern}, all converge to the Optimized Halpern Iteration \citep{lieder2021convergence,kim_accelerated_2021} -- an implicit method. Consequentially, this implies that all aforementioned methods have point convergence, as the Optimized Halpern Iteration has point convergence. Additionally, \citet{InIterate23} shows that the fOGDA has point convergence to the solution set for constrained monotone variational inequalities.

\paragraph{Preliminary Version of this Paper.} We would like to mention that the initial version of this paper was first made available online in June 2022. This early version included an algorithm similar to \ref{composite-EAG} that is capable of achieving an $O(\frac{1}{T})$ accelerated convergence rate for \CMI. In concurrent work, \citet{kovalev2022first} propose \ref{composite-EAG} and prove its convergence for \CMI with $\rho \ge 0$. The current version not only includes this result but also introduces a novel point convergence analysis. Note that \citet{InIterate23,Z23-ogda,Z23-single} appeared after our manuscript was available online. In particular, the analyses used in \citet{Z23-ogda,Z23-single} are similar to the one first proposed in this paper, building upon certain key lemmas and inequalities established here. \citet{tran2023extragradient} provides an alternative proof for the convergence rates of our algorithm \ref{composite-FEG} but does not provide any point convergence guarantees.\footnote{\ref{composite-FEG} was proposed in the second version of this paper, which appeared online in August 2022.} 

We summarize all results discussed in this section and our results in Table~\ref{tab:table}. 

\section{Preliminaries}\label{sec:prelim}
We consider the Euclidean Space $(\R^n, \InNorms{\cdot})$, where $\InNorms{\cdot}$ is the $\ell_2$ norm and $\InAngles{\cdot,\cdot}$ denotes inner product on $\R^n$. 

\paragraph{Basic Notions about Operators.} A set-valued operator $A:\R^n \rightrightarrows \R^n$ maps $z\in \R^n$ to a subset $A(z) \subseteq \R^n$. We say $A$ is \emph{single-valued} if $|A(z)| = 1$ for all $z \in \R^n$. The \emph{graph} of an operator $A$ is defined as $\Graph_A = \{(z, u): z\in \R^n, u \in A(z)\}$. The \emph{inverse operator} of $A$ is denoted as $A^{-1}$ whose graph is $\Graph_{A^{-1}} = \{(u,z): (z, u) \in \Graph_A\}$. We also denote $\Zeros(A) = \{z \in \R^n: 0 \in A(z)\}$. For two operators $A$ and $B$, we denote $A + B$ as the operator with graph $ \Graph_{A+B} = \{(z, u_A + u_B): (z, u_A) \in \Graph_A, (z, u_B) \in \Graph_B\}$.
We denote the identity operator as $I : \R^n \rightarrow \R^n$. For $L \in (0, \infty)$, a single-valued operator $A : \R^n \rightarrow \R^n $ is \emph{$L$-Lipschitz} if 
\begin{align*}
    \InNorms{A(z) - A(z')} \le L \cdot \InNorms{z - z'}, \quad \forall z,z'\in \R^n. 
\end{align*}
Moreover, $A$ is \emph{non-expansive} if it is $1$-Lipschitz. For a non-expansive operator $A$, we denote its fix points $\Fixpoint(A) = \{z \in \R^n: z = A(z)\}$.  For a closed convex set $\Z \subseteq \R^n$ and point $z \in \R^n$, we denote the \emph{normal cone operator} as $N_\Z$:
\begin{align*}
    N_\Z(z) = 
    \begin{cases}
    \emptyset, & z \notin \Z, \\
    \{v \in \R^n : \InAngles{v, z' - z} \le 0, \forall z' \in \Z\}, & z \in \Z.
    \end{cases}
\end{align*}
Define the indicator function \begin{equation*}
\indSet_\Z(z) = \begin{cases}
0 & \quad \text{if }z\in\Z, \\
+\infty & \quad \text{otherwise}.
\end{cases}
\end{equation*}
Then it is not hard to see that the subdifferential operator $\partial \indSet_\Z = N_\Z$. The projection operator $\Pi_\Z:\R^n \rightarrow \R^n$ is defined as $\Pi_\Z [z] := \argmin_{z' \in \Z} \InNorms{z - z'}^2$.

\paragraph{(Co)monotone Operators} For $\rho \in \R$, an operator $A: \R^n \rightrightarrows \R^n$ is $\rho$-comonotone~\citep{bauschke2021generalized},\footnote{ when $\rho < 0$, $\rho$-comonotonicity is also known as $|\rho|$-cohypomonotonicity~\citep{combettes2004proximal}.} if 
\begin{align*}
    \InAngles{u - u', z - z'} \ge \rho \InNorms{u-u'}^2, \quad \forall (z,u),(z',u') \in \Graph_A.
\end{align*}
If $A$ is $0$-comonotone, then $A$ is \emph{monotone}. If $A$ is $\rho$-comonotone for $\rho > 0$, we also say $A$ is \emph{$\rho$-cocoercive} (a stronger assumption than monotonicity). When $A$ satisfies negative comonotonicity, i.e., $\rho$-comonotonicity with $\rho < 0$, then $A$ is possibly non-monotone.  Negative comonotonicity is the focus of this paper in the non-monotone setting. 

$A$ is \emph{maximally $\rho$-comonotone} if $A$ is $\rho$-comonotone and there is no other $\rho$-comonotone operator $B$ such that $\Graph_A \subset \Graph_B$ strictly. $A$ is maximally monotone if it is maximally $0$-comonotone. When $f:\R^n \rightarrow \R$ is a convex closed proper function, then its subdifferential operator $\partial f$ is maximally monotone. As an example, $\partial \indSet_\Z = N_\Z$ is maximally monotone. 
\paragraph{Resolvent Operators} We denote the \textbf{resolvent} of an operator $A$ as $J_{A}:= (I + A)^{-1}$. 
\begin{proposition}\citep{rockafellar_monotone_1976,ryu2016primer,bauschke_convex_2011, Ryu22Large}
    When $A$ is maximally monotone, its resolvent $J_{\eta A}$ has the following properties:
     for any $\eta > 0$, \begin{enumerate}
    \item $J_{\eta A}$ is well-defined on $\R^n$ and  non-expansive;
    \item if $z = J_{\eta A}(z')$,  $\frac{z' - z}{\eta} \in A(z)$;
    \item when $A = N_\Z$ is the normal cone operator of a closed convex set $\Z \subseteq \R^n$, $J_{\eta A} = \Pi_\Z$ is the projection operator.
\end{enumerate}
\end{proposition}

\subsection{Inclusion Problems with Negatively Comonotone Operators.} 
Given a single-valued and possibly \emph{non-monotone} operator $F$ and a \emph{set-valued maximally monotone} operator $A$, we denote $E = F + A$. The comonotone inclusion problem is to find a point $z^* \in \R^n$ that satisfies 
\begin{equation}\label{CMI}\tag{CMI}
    \begin{gathered}
        \boldsymbol{0} \in E(z^*) = F(z^*) + A(z^*).
    \end{gathered}
\end{equation}
We say $z$ is an $\epsilon$-approximate solution to \CMI if $\boldsymbol{0} \in F(z) + A(z) + \mathcal{B}(\boldsymbol{0}, \epsilon)$. We summarize the assumptions on \CMI below.
\begin{assumption}\label{assumption:CMI}
In \CMI, 
    \begin{enumerate}
        \item $F:\R^n \rightarrow \R^n$ is $L$-Lipschitz. 
        \item $A:\R^n \rightrightarrows \R^n$ is maximally monotone.
        \item $E = F+A$ is $\rho$-comonotone, i.e., there exists $\rho \le 0$ such that 
        \begin{align*}
            \InAngles{u - u', z - z'} \ge \rho \InNorms{u - u'}^2, \forall (z, u), (z', u') \in \Graph_E.
        \end{align*}
        \item There exists a solution $z^* \in \R^n$ such that $\boldsymbol{0} \in E(z^*)$.
    \end{enumerate}
\end{assumption}
The formulation of \CMI provides a unified treatment for a range of problems including variational inequalities (see \Cref{sec:preliminary-MI&VI}), min-max optimization, and multi-player games. We present one detailed example below and refer readers to \citep{facchinei_finite-dimensional_2003} for more examples.
\begin{example}[Min-Max Optimization]\label{ex:minmax}
The following structured min-max optimization problem captures a wide range of applications in machine learning such as GANs, adversarial examples, robust optimization, and reinforcement learning:
\begin{align}\label{eq:min-max}
    \min_{x \in \R^{n_x}} \max_{y \in \R^{n_y}} f(x,y) + g(x) - h(y),
\end{align}
where $f(\cdot,\cdot)$ is possibly non-convex in $x$ and non-concave in $y$. Regularized and constrained min-max problems are covered by appropriate choices of lower semicontinuous and convex functions $g$ and $h$. Examples include $\ell_1$-norm, $\ell_2$-norm, and indicator function of a convex feasible set. Let $z=(x,y)$, we define $F(z) = (\partial_x f(x,y), -\partial_y f(x,y))$ and $A(z) = (\partial g(x), \partial h(y))$. If $F$ and $A$ satisfies \Cref{assumption:CMI}, then we call it a \emph{comonotone min-max optimization} problem. See \citep[Example 1]{lee2021fast} for examples of nonconvex-nonconcave conditions that imply negative comonotonicity. We also note that the resolvent of $\partial g$ is also known as the \emph{proximal operator} $\proximal_{\eta g} = (I + \lambda \eta \partial g)^{-1}$. The resolvent $J_{\eta \partial g}$ is efficiently computatable for $g$ being the $\ell_1$-norm, $\ell_2$-norm, matrix norms, an indicator function for a convex feasible set, among many others (see \citep[Ch 6, 7]{parikh2014proximal} for a comprehensive overview of proximal operators and their efficient computation).
\end{example}

\subsection{Convergence Criteria}\label{sec:convergence criteria}
An appropriate convergence criterion is the \emph{tangent residual} $\Ham_{F,A}(z) := \min_{c \in A(z)} \InNorms{F(z) + c}$, which is an extension of the operator norm $\InNorms{F(z)}$ in the unconstrained setting ($A = 0$) to the composite setting. This is a natural quantity for inclusion problems, as $\Ham_{F,A}(z) \le \epsilon$ is equivalent to $z$ being an $\epsilon$-approximate solution to \CMI. In this paper, following \citep{cai2022finite}, we refer to this quantity as the ``tangent residual". Another commonly-used convergence criterion that captures the stationarity of a solution is the \emph{natural residual} $r^{nat}_{F,A}:= \InNorms{z - J_A\InBrackets{z - F(z)}}$. Note that $z^*$ is a solution to \CMI iff $z^* = J_A \InBrackets{z^*- F(z^*)}$. The following fact shows that natural residual is upper bounded by the tangent residual (see proof in \Cref{app:ts<=ns}).

\begin{fact}\label{fact:tangent residual smaller than natural residual}
In \CMI, $r^{nat}_{F,A}(z) \le \Ham_{F,A}(z)$. 
\end{fact}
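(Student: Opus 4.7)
The plan is to unwind the two residual definitions and exploit the monotonicity of the operator $A$. Let $w := J_A[z-F(z)]$, so by the definition of the resolvent, $w$ satisfies $z-F(z) \in w + A(w)$, i.e.\ $z-F(z)-w \in A(w)$. On the other hand, let $c^\star \in A(z)$ be a minimizer in the tangent residual, so that $r^{tan}_{F,A}(z) = \|F(z)+c^\star\|$ (the minimum exists because $A(z)$ is closed and convex, as $A$ is maximally monotone, and $\|F(z)+\cdot\|$ is continuous and coercive; alternatively, just pick any $c \in A(z)$ and the same argument gives $\|z-w\| \le \|F(z)+c\|$, then infimize over $c$).

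Since $A$ is monotone and both $(z, c^\star)$ and $(w, z-F(z)-w)$ lie in $\Graph_A$, we have
\begin{equation*}
    \langle c^\star - (z-F(z)-w),\; z - w \rangle \ge 0.
\end{equation*}
Expanding and moving the $\langle w-z, z-w\rangle = -\|z-w\|^2$ term to the other side yields
\begin{equation*}
    \|z-w\|^2 \le \langle F(z) + c^\star,\; z-w\rangle.
\end{equation*}
Applying Cauchy--Schwarz and (if $\|z-w\|\neq 0$) dividing by $\|z-w\|$ gives $\|z-w\| \le \|F(z)+c^\star\|$, which is exactly $r^{nat}_{F,A}(z) \le r^{tan}_{F,A}(z)$; the case $\|z-w\| = 0$ is trivial.

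There is no real obstacle here: the only subtle point is remembering that the resolvent identity $w = J_A[z-F(z)]$ produces a concrete element of $A(w)$, namely $z-F(z)-w$, which is precisely what lets us invoke monotonicity of $A$ against the element $c^\star \in A(z)$ used in the tangent residual. Everything else is one application of Cauchy--Schwarz. The same template (pair the resolvent-defined element of $A(w)$ against a minimizing $c^\star \in A(z)$, apply monotonicity, Cauchy--Schwarz) will likely reappear whenever tangent and natural residuals need to be compared, so it is worth stating as a standalone fact.
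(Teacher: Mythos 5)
Your proof is correct. It takes the same conceptual route as the paper's, but where you argue from scratch, the paper uses one additional small trick that makes the computation shorter: it notes that for any $c\in A(z)$ one has $z = J_A[z+c]$ (since $z+c-z = c\in A(z)$), so the natural residual can be rewritten as $\InNorms{J_A[z+c]-J_A[z-F(z)]}$ and non-expansiveness of $J_A$ immediately gives the bound $\InNorms{F(z)+c}$, after which one minimizes over $c$. What you have done is essentially re-derive the (firm) non-expansiveness of $J_A$ inline: pairing the resolvent-produced element $z-F(z)-w\in A(w)$ against $c^\star\in A(z)$, applying monotonicity, and finishing with Cauchy--Schwarz is precisely the standard proof that resolvents of monotone operators are non-expansive. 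So nothing is missing; your argument is just slightly less packaged. One side remark: you do not actually need the minimum in the tangent residual to be attained, and your parenthetical about infimizing over arbitrary $c\in A(z)$ is the cleaner way to state it (it avoids having to justify attainment), which is also the form the paper uses.
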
 

In this paper, we state our convergence rates in terms of the tangent residual $\Ham_{F,A}(z)$, which implies (i) convergence rates in terms of the natural residual $r_{F,A}^{nat}(z)$, and (ii) $z$ is an approximate solution to \CMI and the variational inequality problems including \ref{SVI} and \ref{MVI} (see Definitions in \Cref{sec:preliminary-MI&VI}).

\section{\ref{composite-EAG} for Comonotone Inclusion with Point Convergence}
\label{sec:composite-EAG}

\begin{algorithm*}[!ht]
    \caption{\ref{composite-EAG} for \CMI with $\rho > -\frac{1}{2L}$}
    \begin{algorithmic} \label{alg:composite-EAG}
        \STATE \textbf{Initialization:}  $z^0 \in \R^n$. Step size $\eta > 0$. $\beta_k = \frac{1}{k+1}$ for $k \ge 0$. Set \begin{align*}
            z_1 = J_{\eta A}[z_0 - \eta F(z_0)], \quad c_1 = \frac{z_0 - \eta F(z_0) - z_1}{\eta}
        \end{align*}
        \FOR{$k \ge 1$}
        \STATE \begin{equation}\label{composite-EAG}\tag{composite-EAG}
                \begin{aligned}
                    z_{k+\half} &= \beta_k z_0 + (1- \beta_k) z_k - \eta F(z_k) - \eta c_k \\
                    z_{k+1} &= J_{\eta A}[\beta_k z_0 + (1- \beta_k) z_k - \eta F(z_{k+\half})] \\
                    c_{k+1} &= \frac{\beta_k z_0 + (1- \beta_k) z_k - \eta F(z_{k+\half}) - z_{k+1}}{\eta}
                \end{aligned} 
            \end{equation}
        \ENDFOR
    \end{algorithmic}
\end{algorithm*}

We study \CMI under \Cref{assumption:CMI} with $\rho$-comonotone operators. The extra anchored gradient (EAG) algorithm~\citep{yoon2021accelerated} has been shown to have optimal last-iterate convergence rate and point convergence~\citep{yoon2022accelerated}. However, these results only hold when (i) $A = 0$, i.e., the \emph{unconstrained} setting and (ii) $F$ is \emph{monotone ($\rho \ge 0)$}. The state-of-the-art work of \citet{InIterate23} achieves similar results in the constrained setting, i.e., $A=\partial\indSet_S$ for the feasible set $S$, but still requires the monotonicity assumption on $F$. Thus the problem of achieving $O(1/T)$ convergence rate and point convergence without the monotonicity assumption is open even in the unconstrained case. In this section, we improve both results from \citep{yoon2022accelerated, InIterate23} by proposing the composite extra anchored gradient algorithm \ref{composite-EAG} that achieves both the optimal $O(1/T)$ last-iterate convergence rate (\Cref{thm: composite-EAG last-iterate rate}) and point convergence (\Cref{thm: composite-EAG merging path}) for \CMI with $\rho \ge -\frac{1}{20L}$. 
\paragraph{composite-EAG} 
We propose \emph{composite extra anchored gradient (\ref{composite-EAG})} algorithm (\Cref{alg:composite-EAG}). Specifically, given an arbitrary $z_0$ and step size $\eta > 0$, \ref{composite-EAG} first set $z_1 = J_{\eta A}[z_0 - \eta F(z_0)]$ and $c_1 = \frac{z_0 - \eta F(z_0) - z_1}{\eta}$; then for any $k \ge 1$, it updates as shown in \Cref{alg:composite-EAG}.
Recall that by definition of the resolvent $J_{\eta A}$, we have $c_k \in A(z_k)$. All missing proofs in this section can be found in \Cref{app: composite-EAG}.

\begin{remark}
    A keen reader may notice that in the important special case of constrained variational inequality problem where $A = N_\Z$ is the normal cone of a convex set $\Z \subseteq \R^n$, the $z_{k+\half}$ produced by \ref{composite-EAG} may not lie in the set $\Z$. In \Cref{sec:proj-EAG}, we present \ref{proj-EAG}, a variant of \ref{composite-EAG}, designed to consistently produce iterates within the set $\Z$. Furthermore, \ref{proj-EAG} preserves the $O(1/T)$ convergence rate alongside point convergence.
\end{remark}

\subsection{Technical Novelty}
We illustrate the main technical challenges and techniques developed in extending the algorithms and their convergence guarantees to the composite and comonotone settings, which may also be applicable to analyzing other algorithms.

\paragraph{Constraints.} One key challenge is to design the correct update rule of \emph{composite-EAG} in the composite setting. Standard approaches, such as adding two backward steps (i.e.,  projections) to EAG, do not yield algorithms that admit  simple analysis. In \ref{composite-EAG}, we introduce a new element $c_k \in A(z_k)$ to ensure that  $F(z_k) + c_k \in (F + A)(z_k)$. If we consider \(F + A\) as a single operator, the update rule of \ref{composite-EAG} simplifies to that of EAG in the unconstrained setting. 
This property enables us to smoothly extend the analysis from the unconstrained setting to the composite setting. The design of \ref{composite-EAG} also appears in the concurrent work of~\citet {kovalev2022first}. Our approach to handling constraints has inspired subsequent work by \citet{InIterate23}, which extends fast OGDA from the unconstrained to the constrained setting. 

\paragraph{Negative Comonotonicity.} Negative comonotonicity introduces an additional quadratic term in the analysis of \ref{composite-EAG}, and it is unclear how to cancel this term. We address it through a weighted sum of terms, employing Young’s inequality and a carefully balanced set of coefficients. Although this approach enables our analysis, it results in a more restrictive range of \(\rho \ge -\frac{1}{20L}\) compared to \ref{composite-FEG}.

\subsection{$O(1/T)$ Last-Iterate Convergence Rate}
In this section, we show that \ref{composite-EAG} has $O(1/T)$ last-iterate convergence rate with respect to the tangent residual, which matches the lower bound for any first-order methods~\citep{diakonikolas2020halpern, yoon2021accelerated}. 

\begin{theorem}[Last-iterate convergence rate of \ref{composite-EAG}]\label{thm: composite-EAG last-iterate rate}
Consider a comonotone inclusion problem with \Cref{assumption:CMI} holds with $\rho \ge -\frac{1}{20L}$. Let $\eta = \frac{0.31}{L}$ and $\{z_k, z_{k+\half}\}_{k \ge 1}$ the iterates generated by \ref{composite-EAG}. Then the following holds for any $T \ge 1$, 
\[
r^{tan}_{F,A}(z_T)^2 \le \InNorms{F(z_T) + c_T}^2 \le \frac{20H^2}{\eta^2 T^2},
\]     
where $H^2 := 6\InNorms{z_1 - z_0}^2 + \InNorms{z_0 - z^*}^2 \le 6\eta^2 r^{tan}_{F, A}(z_0)^2 + \InNorms{z_0 - z^*}^2$.
\end{theorem}

\paragraph{Proof Sketch} We apply a potential function argument. Recall the definition of the tangent residual $r^{tan}_{F,A}(z):= \min_{c \in A(z)} \InNorms{F(z) + c}$, which involves an optimization problem and can be hard to analyze directly. Since $c_k \in A(z_k)$, we have $\InNorms{F(z_k) + c_k} \ge r^{tan}_{F,A}(z_k)$ and $\InNorms{F(z_k) + c_k}$ can be used as a proxy for the tangent residual. We construct a potential function $V_k$ that is of the order $\Omega(k^2 \cdot \Ham_{F,A}(z_k)^2)$ (\Cref{lemma:composite-EAG lower bound V_k}). Although the potential function may not always decrease, we manage to prove that the increment in each iteration $k$ is sufficiently small: $V_{k+1} - V_k \le O(1)\cdot\InNorms{F(z_k) + c_k}^2$ in \Cref{thm:composite-EAG monotone potential}. We then apply this \emph{approximately non-increasing} property of the potential function to conclude $O(1/T)$ last-iterate convergence rate of $\InNorms{F(z_k) + c_k}$ and thus $r^{tan}_{F,A}(z_k)$. 

\subsubsection{Proof of \Cref{thm: composite-EAG last-iterate rate}}
\paragraph{Potential function} We formally define the potential function $V_k$ for $k \ge 1$ as $V_k = \frac{k(k+1)}{2} \InNorms{\eta F(z_k) + \eta c_k}^2 + k \InAngles{\eta F(z_k) + \eta c_k, z_k - z_0}$.
A bound on $V_1$ is immediate. 

\begin{lemma}[Upper bound of $V_1$]\label{lemma:composite-EAG V_1 bound}
    In the same setup as \Cref{thm: composite-EAG last-iterate rate}, we have $V_1 \le  6\InNorms{z_1 - z_0}^2 \le 6 \eta^2 r^{tan}_{F, A}(z_0)^2$ 
\end{lemma}

The following result that proves that $V_k$ is approximately non-increasing is the core of the analysis.
\begin{theorem}[Approximately non-increasing potential]
\label{thm:composite-EAG monotone potential}
    In the same setup as \Cref{thm: composite-EAG last-iterate rate}, we have for all $k \ge 1$, 
    \begin{align*}
        &V_{k+1} - V_k  \\
        &\le \frac{1}{2}\InNorms{\eta F(z_{k+1}) + \eta c_{k+1}}^2 - \frac{9k(k+1)}{4000}\InNorms{z_{k+\half} - z_{k+1}}^2.
    \end{align*}
\end{theorem}

The following lemma shows that $V_k$ is of order $\Omega(k^2 \cdot r^{tan}(z_k)^2)$. 
\begin{lemma}\label{lemma:composite-EAG lower bound V_k}
    In the same setup as \Cref{thm: composite-EAG last-iterate rate}, for any $k \ge 1$, we have
    \[
    \frac{k(k+1)}{4} \InNorms{\eta F(z_k) + \eta c_k}^2 \le V_k + \InNorms{z_0 - z^*}^2
    \]
    In particular, $V_k \ge - \InNorms{z_0 -z^*}^2$.
\end{lemma}

\paragraph{Proof of \Cref{thm: composite-EAG last-iterate rate}} Recall that $H^2 \ge V_1 + \InNorms{z_0 - z^*}^2$. For $k = 1$, from \Cref{lemma:composite-EAG lower bound V_k}, we directly get that $\InNorms{\eta F(z_1) + \eta c_1}^2 \le 4(V_1 + \InNorms{z_0 - z^*}^2)$.
    
Now fix any $k \ge 2$. Combining \Cref{thm:composite-EAG monotone potential} and \Cref{lemma:composite-EAG lower bound V_k}, we have 
\begin{align*}
    &\frac{k(k+1)}{4}\InNorms{\eta F(z_k) + \eta c_k}^2 \\
    &\le V_k + \InNorms{z_0 - z^*}^2\\
    &\le V_1 +  \InNorms{z_0 - z^*}^2 + \frac{1}{2} \cdot \sum_{t=2}^k \InNorms{\eta F(z_t) + \eta c_t}^2.
\end{align*}
Subtracting $\frac{1}{2}\InNorms{F(z_k) + \eta c_k}^2$ from both sides and noting that $\frac{1}{4}k \ge \frac{1}{2}$ for $k \ge 2$ gives
\begin{align*}
    &\frac{k^2}{4} \InNorms{\eta F(z_k) + \eta c_k}^2 \\
    &\le V_1 +  \InNorms{z_0 - z^*}^2 + \frac{1}{2} \cdot \sum_{t=2}^{k-1} \InNorms{\eta F(z_t) + \eta c_t}^2.
\end{align*}
Now we can apply \Cref{prop:1/2 sequence analysis} with $C_1 = V_1 + \InNorms{z_0 - z^*}^2$ to conclude that $\InNorms{\eta F(z_k) + \eta c_k}^2 \le \frac{20(V_1 + \InNorms{z_0 - z^*}^2)}{k^2} \le \frac{20H^2}{k^2}$. 

\begin{corollary}\label{corollary:composite-EAG bounded summation}
    In the same setup as \Cref{thm:composite-EAG monotone potential}, we have
    \[
     \sum_{k=1}^\infty k(k+1) \InNorms{z_{k+1} - z_{k+\half}}^2 \le 4000 H^2.
    \]
    It also implies $\InNorms{z_{k+1} - z_{k+\half}}^2 \le \frac{8000 H^2}{(k+1)^2}$ for all $k \ge 1$.
\end{corollary}

\subsection{ Point Convergence of \ref{composite-EAG}}
To further show point convergence of \ref{composite-EAG}, we identify a \emph{merging path (MP)} property, a notion proposed in~\citep{yoon2022accelerated}, between \ref{composite-EAG} and the \emph{optimized Halpern's method \eqref{OHM}}~\citep{lieder2021convergence, kim_accelerated_2021}. We first present the definition of the merging path property and results for \ref{OHM}.

\paragraph{The Merging Path Property~\citep{yoon2022accelerated}} Let $\mathfrak{R}$ be a deterministic algorithm. We use $\mathfrak{R}(x_0;\+P)=\{x_0, x_1, \ldots\}$ to denote that $\mathfrak{R}$ applied to problem $\+P$ with initial point $x_0$ produces iterates $x_0, x_1, \ldots$. We say two algorithms $\mathfrak{R}_1$ and $\mathfrak{R}_1$ have \emph{$O(r(k))$-merging path} if for any problem $\+P$ and point $x_0 \in \R^n$, the iterates $\mathfrak{R}_\ell(x_0; \+P) = (x_0, x_1^\ell, x_2^\ell, \ldots)$ for $\ell= 1, 2$ satisfy $\InNorms{x_k^1 - x_k^2}^2 = O(r(k))$. We say $\mathfrak{R}_1$ and $\mathfrak{R}_1$  are $O(r(k))$-MP if they have $O(r(k))$-merging paths. The merging path property is a strong notion that quantifies the near-equivalence of algorithms. \citet{yoon2022accelerated} establishes $O(1/k^2)$-MP for EAG, FEG, and the optimized Halpern's method \eqref{OHM} in the unconstrained ($A = 0$) and monotone ($\rho \ge 0$) setting. Since it is well-known that \ref{OHM} has point convergence, the merging path property implies point convergence of EAG and FEG in the unconstrained and monotone setting. 

\paragraph{Optimized Halpern's Method} For the inclusion problem of $F + A$, \ref{OHM} is an implicit method: let $w_0 = z_0$, it updates in each iteration $k \ge 0$:
\begin{equation}\label{OHM}\tag{OHM}
    \begin{aligned}
        w_{k+\half} &= \beta_k w_0 + (1- \beta_k) w_k \\
     w_{k+1} &= J_{\eta (F + A)}[w_{k+\half}]
    \end{aligned} 
\end{equation}
where we denote $\beta_k = \frac{1}{k+1}$. For the ease of analysis, we define $d_{k+1} = \frac{w_{k+\half} - \eta F(w_{k+1}) - w_{k+1}}{\eta}$. By definition of the resolvent $J_{\eta (F + A)}$, we have $d_{k+1} \in A(w_{k+1})$.  We note that when $F+A$ is maximally $\rho$-comonotone with $\rho > - \frac{1}{2L}$, its resolvent $J_{\eta (F + A)}$ is nonexpansive~\citep{bauschke2021generalized}. Thus, classical results on Halpern's iteration show that \ref{OHM} converges~\citep[Theorem 30.1]{bauschke2017convex} and has optimal last-iterate convergence rate~\citep{lieder2021convergence}. We summarize these results below.

\begin{theorem}[Theorem 30.1 of \citep{bauschke2017convex},  Theorem 2.1 of \citep{lieder2021convergence}]\label{thm:OHM} Let $w_0 = z_0 \in \Z$ be any point. If $J_{\eta(F + A)}$ has a fixed point,\footnote{$J_{\eta(F + A)}$ has a 
fixed point
is equivalent to $\Zeros(F+A)$ is nonempty.} then the iterates $\{w_k\}_{k\ge 0}$ of \ref{OHM} converge to $\Pi_{\Zeros(F + A)}[z_0] = \argmin_{z \in \Zeros(F + A)}\InNorms{z - z_0}$. Moreover, for any $k \ge 1$, it holds that 
    \begin{align*}
        \InNorms{\eta F(w_{k+1}) + \eta d_{k+1}}^2 &=  \InNorms{w_{k+\half} - J_{\eta (F + N_\Z)}[w_{k+\half}]}^2 \\
        &\le \frac{4 \InNorms{w_0 - w^*}^2}{(k+1)^2}.
    \end{align*}
\end{theorem}

\paragraph{Merging Path between \ref{composite-EAG} and \ref{OHM}} The main result of this section is to establish that \ref{composite-EAG} and \ref{OHM} are $O(\frac{1}{\sqrt{k}})$-MP, i.e., $\InNorms{z_{k+1} - w_{k+1}}^2 = O(\frac{1}{\sqrt{k}})$ (\Cref{thm: composite-EAG merging path}). 
Combining this result with the convergence guarantee of \ref{OHM} (\Cref{thm:OHM}) directly implies that \ref{composite-EAG} converges to the solution closest to the initial point $z_0$. 

\begin{theorem}[Merging path between \ref{composite-EAG} and \ref{OHM}]
\label{thm: composite-EAG merging path}
    Consider a comonotone inclusion problem with \Cref{assumption:CMI} holds with $\rho \ge -\frac{1}{20L}$. Let $\eta = \frac{0.31}{L}$. Denote $\{z_k\}_{k\ge 0}$ the iterates of \ref{composite-EAG} and $\{w_k\}_{k \ge 0}$ the iterates of \ref{OHM}. Let $w_0 = z_0$, then for any $k \ge 1$, 
    \[
    \InNorms{z_{k+1} - w_{k+1}}^2 \le \frac{40000 H^2}{(k+1)^{\half}} + \frac{30000H^2}{(k+1)^2}.
    \]
\end{theorem}
The rate of $O(\frac{1}{\sqrt{k}})$-MP is worse than $O(1/k^2)$-MP obtained by \citet{yoon2022accelerated} in the unconstrained and monotone setting. This is due to the need to tackle additional technical challenges introduced by the constraints and negative comonotonicity. Understanding the precise dependence on $k$ for the merging path property is an interesting open problem. To our knowledge, \ref{composite-EAG} is the first explicit algorithm with both optimal convergence rate and point convergence guarantee for \CMI. Indeed, such guarantees were not known even in the unconstrained setting. 

\paragraph{Proof Sketch} Fix any $k$, we conduct a single-step analysis and prove the following inequality
\begin{align*}
    (k+1)^2\InNorms{z_{k+1}-w_{k+1}}^2 \le k^2\InNorms{z_k - w_k}^2 + (k+1)^2 E_k
\end{align*}
where $E_k$ represents certain terms. By telescoping the above inequality, it remains to show that $\sum_{t=1}^k (k+1)^2 E_k = O(k^{3/2})$, which follows from combining the $O(1/k)$ last-iterate convergence rate (\Cref{thm: composite-EAG last-iterate rate}) and \Cref{corollary:composite-EAG bounded summation}.

\section{\ref{composite-FEG}: Optimal Convergence Rate for a Wider Range of $\rho$}\label{sec:composite-FEG}
The \ref{composite-EAG} algorithm has optimal last-iterate convergence rate as well as guaranteed point convergence, but only for \CMI with $\rho \ge -\frac{1}{20L}$. In this section, we propose the composite Fast Extra Gradient \eqref{composite-FEG} method, an explicit method that is applicable to \CMI with $\rho > - \frac{1}{2L}$. We show that \ref{composite-FEG} achieves optimal $O(1/T)$ last-iterate convergence rate. We remark that \CMI with $\rho > -\frac{1}{2L}$ is currently the largest class of non-monotone problems efficiently solvable by any single-loop algorithms. Our result generalizes previous results~\citep{yoon2021accelerated, lee2021fast}, which are limited to the unconstrained setting.  Additionally, we show that \ref{composite-FEG} has point convergence in the monotone setting ($\rho = 0$), which matches the state-of-the-art result by \citep{InIterate23}.

\paragraph{Extension of FEG for Comonotone Inclusion} 
Given any initial point $z_0 \in \R^n$ and step size $\eta >0$, \ref{composite-FEG} sets $c_0 = 0$ and updates $\{z_{k+\half}, z_{k+1}, c_{k+1}\}_{k\ge 0}$ as shown in \Cref{alg:composite-FEG}. Note that by definition, we have $c_k \in A(z_k)$ for all $k \ge 1$. Our algorithm is inspired by FEG~\cite{lee2021fast}. In particular, when $A(z)= 0$, i.e., the unconstrained setting, $c_k$ is always $\boldzero$, and our algorithm is identical to FEG.
\begin{algorithm*}[!ht]
    \caption{\ref{composite-FEG} for \CMI with $\rho > -\frac{1}{2L}$}
    \begin{algorithmic} \label{alg:composite-FEG}
        \STATE \textbf{Initialization:}  $z^0 \in \R^n$, $c_0 = 0$, $\beta_k = \frac{1}{k+1}$ for $k \ge 0$
        \FOR{$k \ge 0$}
        \STATE \begin{equation}\label{composite-FEG}\tag{composite-FEG}
            \begin{aligned}
                &z_{k+\half} = \beta_k z_0 + (1-\beta_k)z_k - (1-\beta_k)(\eta + 2\rho) (F(z_k) + c_k) \\
                &z_{k+1} = J_{\eta A} \InBrackets{\beta_k z_0 + (1-\beta_k)z_k -  \eta F(z_{k+\half}) - 2(1-\beta_k)\rho (F(z_k)+c_k) }\\
                &c_{k+1} = \frac{\beta_k z_0 + (1-\beta_k)z_k -  \eta F(z_{k+\half}) - 2(1-\beta_k)\rho (F(z_k)+c_k)- z_{k+1}}{\eta}
            \end{aligned}
        \end{equation}
        \ENDFOR
    \end{algorithmic}
\end{algorithm*}

Our main results of the section shows that \ref{composite-FEG} has optimal $O(\frac{1}{T})$ for the range of comonotone inclusion problems with $\rho > -\frac{1}{2L}$. Moreover, \ref{composite-FEG} enjoys point convergence in the monotone setting. All missing proofs in this section can be found in \Cref{app:composite FEG}.

\subsection{Last-Iterate Convergence Rate of \ref{composite-FEG}}
\begin{theorem}\label{thm: composite-FEG last-iterate rate}
Suppose Assumption~\ref{assumption:CMI} holds for some $\rho \in (-\frac{1}{2L}, 0]$. Let $z_0 \in \R^n$ be any starting point and $\{z_k, z_{k+\half}\}_{k\ge 1}$ be the iterates of \ref{composite-FEG} with step size $\eta \in (max(0,-2\rho),\frac{1}{L}]$. Denote $H_0^2 = \min_{z^* \in \Zeros(F+A)}\InNorms{z_0  -z^*}^2$.
Then for any $T \ge 1$, 
\begin{align*}
    \Ham_{F,A}(z_T)^2  = \min_{c \in A(z_T)}\InNorms{F(z_T) + c}^2 \le \frac{4}{(\eta + 2\rho)^2 L^2} \frac{H_0^2 L^2}{T^2}.
\end{align*}
\end{theorem}
\begin{remark}
To interpret the convergence rate, one can think of a properly selected $\eta$ such that $(\eta + 2\rho) L$ is an absolute constant, and $\Ham_{F,A}(z_T)^2$ will be $O\InParentheses{\frac{\InNorms{z_0-z^*}^2 L^2}{T^2}}$. In particular, when $\rho = 0$ (the monotone case), we can choose $\eta = \frac{1}{L}$ and get $\Ham_{F,A}(z_T)^2 \le \frac{4\InNorms{z_0 - z^*}^2 L^2}{T^2}$.
\end{remark}

We show that \ref{composite-FEG} enjoys the optimal convergence rate of $O(\frac{1}{T})$ via a potential function argument.
\paragraph{Potential function} To analyze our algorithm, we adopt the following potential function: for $k \ge 1$, 
\begin{align*}
    U_k &:= \InParentheses{ \frac{k^2}{2}\InParentheses{1+\frac{2\rho}{\eta}} - \frac{\rho}{\eta} k} \cdot \InNorms{\eta F(z_k) + \eta c_k}^2 \\
    &+ k \cdot \InAngles{\eta F(z_k) + \eta c_k, z_k - z_0}.
\end{align*}

The potential function is the same as the one used in the analysis for FEG~\cite{lee2021fast} when $c_k$ is always $0$, and we adapt it for non-zero $c_k$'s.
The analysis builds on the following two properties of the potential function:  Proposition~\ref{prop:U1 bound} establishes an upper bound of  $U_1$; Lemma~\ref{lem:composite-FEG monotone potential} shows $U_{k+1} \le U_k$ for all $k \ge 1$.
\begin{proposition}\label{prop:U1 bound}
$U_1 \le \frac{\eta^2L^2-1}{2}\InNorms{z_1-z_0}^2$. In particular, if $\eta \le \frac{1}{L}$, then $U_1\le 0$.
\end{proposition}

\begin{lemma}\label{lem:composite-FEG monotone potential}
Assume Assumption~\ref{assumption:CMI} holds for 
some $\rho$. Let $z_0 \in \R^n$ be any initial point and  $\{z_k, z_{k+\half}\}_{k\ge 1}$ be the iterates of \ref{composite-FEG} with step size 
$\eta>-2\rho$.\footnote{Lemma~\ref{lem:composite-FEG monotone potential} holds for all step size $\eta$, but our potential function is no longer useful when $\eta\leq -2\rho$.}
 Then for all $k \ge 1$, we have 
 \[
 U_{k+1} \le U_k - \frac{(1-\eta^2 L^2)(k+1)^2}{2}\InNorms{z_{k+1} - z_{k+\half}}^2.
 \]
\end{lemma}

The following lemma shows that $U_t$ is of order $\Omega(t^2 \cdot r_{F,A}^{tan}(z_t)^2)$. 
\begin{lemma}\label{lemma:composite-FEG lower bound V_k}
    In the same setup as \Cref{thm: composite-FEG last-iterate rate}, for any $t \ge 1$ and any $z^* \in \Zeros(F+A)$, we have
    \[
    \frac{\eta \InParentheses{\eta + 2\rho}t^2}{4} \cdot \InNorms{F(z_t) + c_t}^2 \le U_t + \frac{\eta}{\eta + 2\rho} \InNorms{z_0 - z^*}^2
    \]
    In particular, $U_t \ge - \InNorms{z_0 -z^*}^2$ if $\rho = 0$.
\end{lemma}

Now we are ready to prove $O(1/T)$ last-iterate convergence rate of \ref{composite-FEG}.
\paragraph{Proof of \Cref{thm: composite-FEG last-iterate rate}}
\notshow{
It is clear that $\InNorms{F(z_1) + c_1} \le \frac{1+\eta L}{\eta} \InNorms{z_1 - z_0}$ (See proof in Proposition~\ref{prop:U1 bound}). Since $c_1 \in A(z_1)$, we have 
\begin{align*}
    \min_{c \in A(z_1)}\InNorms{F(z_1) + c}^2 = \Ham_{F,A}(z_1)^2 \le \frac{(1+\eta L)^2}{\eta^2} \InNorms{z_1 - z_0}^2.
\end{align*}
Thus the claim holds for $T = 1$. 
}
By Proposition~\ref{prop:U1 bound} and the assumption that $\eta L < 1$, $U_1 \le \frac{\eta^2L^2-1}{2}\InNorms{z_1 - z_0}^2 \le 0$. Now combining \Cref{lem:composite-FEG monotone potential} and \Cref{lemma:composite-FEG lower bound V_k} gives
\begin{align*}
    &\InNorms{F(z_T) + c_T}^2 \\
    &\le \frac{4}{\eta (\eta + 2\rho)T^2} \InParentheses{ U_T + \frac{\eta}{\eta + 2\rho} \InNorms{z_0 - z^*}^2 } \\
    &\le \frac{4}{(\eta +2\rho)^2T^2} \InParentheses{ \frac{\eta + 2\rho}{\eta}U_1 + \InNorms{z_0 - z^*}^2 } \\
    & \le \frac{4}{(\eta +2\rho)^2T^2} \InNorms{z_0 - z^*}^2 \tag{$\rho \le 0$}. 
\end{align*}
This completes the proof since the above holds for all $T$ and any solution $z^*$ of the inclusion problem.

\subsection{Point Convergence of \ref{composite-FEG}}

In this section, we show that the sequence of iterates of \ref{composite-FEG} converges when $F$ is monotone, i.e., $\rho = 0$. We adopt the same idea as in the previous section. Specifically, we prove the merging path (MP) property between \ref{composite-FEG} and the \emph{optimized Halpern's method \eqref{OHM}}. We remark that MP property between FEG and OHM was established only in the unconstrained setting~\citep{yoon2022accelerated}.

\paragraph{Merging Path between \ref{composite-FEG} and \ref{OHM}} The main result of this section is to establish that \ref{composite-FEG} and \ref{OHM} are $O(\frac{1}{\sqrt{k}})$-MP, i.e., $\InNorms{z_{k+1} - w_{k+1}} = O(\frac{1}{\sqrt{k}})$.  Combining this result with the convergence guarantee of \ref{OHM} (\Cref{thm:OHM}) directly proves that \ref{composite-EAG} converges to the solution closest to the initial point $z_0$. 
\begin{theorem}[Merging path between \ref{composite-FEG} and \ref{OHM}]
\label{thm: composite-FEG merging path}
    Suppose Assumption~\ref{assumption:CMI} holds for $\rho = 0$. Let $z_0 = w_0$ and $\eta \in (0, \frac{1}{L})$. Denote $\{z_k\}_{k\ge 0}$ the iterates of \ref{composite-FEG} and $\{w_k\}_{k \ge 0}$ the iterates of \ref{OHM}.  Denote $H_0^2 = \min_{z^* \in \Zeros(F+A)}\InNorms{z_0 - z^*}^2$. Then for any $k \ge 1$, 
    \[
    \InNorms{z_{k+1} - w_{k+1}}^2 \le \frac{H_0^2}{1-\eta^2 L^2} \cdot\InParentheses{\frac{24}{(k+1)^{\half}} + \frac{36}{(k+1)^2}}.
    \]
\end{theorem}
\section{Conclusion and Discussion}
In this work, we propose two novel algorithms for solving comonotone inclusion problems with provable convergence guarantees. 
The \ref{composite-EAG} algorithm recovers EAG from \citep{yoon2021accelerated} in the unconstrained case and has $O(1/T)$ last-iterate convergence rate and the favorable point convergence for $\rho$-comonotone with $\rho \ge -\frac{1}{20L}$. The \ref{composite-FEG} algorithm recovers FEG from \citep{lee2021fast} in the unconstrained case, and has $O(1/T)$ last-iterate convergence rate for $\rho$-comonotone problems with $\rho > -\frac{1}{2L}$ while maintains point convergence in the monotone case. An interesting open problem is to design an algorithm that obtains both optimal convergence rate and point convergence for a wider range of $\rho$-comonotone problems.

\section*{Impact Statement}
This paper presents work that aims to advance the field of Machine Learning. Our work has many potential societal consequences, none of which we feel must be specifically highlighted here.

\section*{Acknowledgment}
We thank the anonymous ICML reviewers for their constructive comments, which improved the presentation of the work. WZ thanks Ruicheng Jiang for an observation, which leads to an improved constant in \Cref{thm: composite-FEG last-iterate rate}. YC and WZ are supported by the NSF Awards CCF-1942583 (CAREER) and CCF-2342642. AO is supported by NSF Award CCF-1942583 (CAREER) and a Meta Ph.D. Research Fellowship. WZ also receives a research fellowship from the Center for Algorithms, Data, and Market Design at Yale (CADMY).
 
\bibliography{ref, references}

\begin{thebibliography}{54}
\providecommand{\natexlab}[1]{#1}
\providecommand{\url}[1]{\texttt{#1}}
\expandafter\ifx\csname urlstyle\endcsname\relax
  \providecommand{\doi}[1]{doi: #1}\else
  \providecommand{\doi}{doi: \begingroup \urlstyle{rm}\Url}\fi

\bibitem[Alacaoglu et~al.(2024)Alacaoglu, Kim, and Wright]{alacaoglu2024extending}
Alacaoglu, A., Kim, D., and Wright, S.~J.
\newblock Extending the reach of first-order algorithms for nonconvex min-max problems with cohypomonotonicity.
\newblock \emph{arXiv preprint arXiv:2402.05071}, 2024.

\bibitem[Arjovsky et~al.(2017)Arjovsky, Chintala, and Bottou]{arjovsky_wasserstein_2017}
Arjovsky, M., Chintala, S., and Bottou, L.
\newblock Wasserstein {Generative} {Adversarial} {Networks}.
\newblock In \emph{Proceedings of the 34th {International} {Conference} on {Machine} {Learning}}, pp.\  214--223. PMLR, July 2017.
\newblock URL \url{https://proceedings.mlr.press/v70/arjovsky17a.html}.
\newblock ISSN: 2640-3498.

\bibitem[Bauschke \& Combettes(2011)Bauschke and Combettes]{bauschke_convex_2011}
Bauschke, H.~H. and Combettes, P.~L.
\newblock \emph{Convex {Analysis} and {Monotone} {Operator} {Theory} in {Hilbert} {Spaces}}.
\newblock Springer, 2011.
\newblock URL \url{https://link.springer.com/book/10.1007/978-1-4419-9467-7}.

\bibitem[Bauschke et~al.(2017)Bauschke, Combettes, Bauschke, and Combettes]{bauschke2017convex}
Bauschke, H.~H., Combettes, P.~L., Bauschke, H.~H., and Combettes, P.~L.
\newblock \emph{Convex Analysis and Monotone Operator Theory in Hilbert Spaces}.
\newblock Springer, 2017.

\bibitem[Bauschke et~al.(2021)Bauschke, Moursi, and Wang]{bauschke2021generalized}
Bauschke, H.~H., Moursi, W.~M., and Wang, X.
\newblock Generalized monotone operators and their averaged resolvents.
\newblock \emph{Mathematical Programming}, 189\penalty0 (1):\penalty0 55--74, 2021.

\bibitem[Ben-Tal et~al.(2009)Ben-Tal, El~Ghaoui, and Nemirovski]{ben-tal_robust_2009}
Ben-Tal, A., El~Ghaoui, L., and Nemirovski, A.
\newblock \emph{Robust {Optimization}}.
\newblock Princeton University Press, December 2009.
\newblock ISBN 978-1-4008-3105-0.
\newblock \doi{10.1515/9781400831050}.
\newblock URL \url{https://www.degruyter.com/document/doi/10.1515/9781400831050/html}.

\bibitem[B{\"o}hm(2022)]{bohm2022solving}
B{\"o}hm, A.
\newblock Solving nonconvex-nonconcave min-max problems exhibiting weak minty solutions.
\newblock \emph{arXiv preprint arXiv:2201.12247}, 2022.

\bibitem[Cai et~al.(2024)Cai, Alacaoglu, and Diakonikolas]{cai2024variance}
Cai, X., Alacaoglu, A., and Diakonikolas, J.
\newblock Variance reduced halpern iteration for finite-sum monotone inclusions.
\newblock In \emph{The Twelfth International Conference on Learning Representations}, 2024.
\newblock URL \url{https://openreview.net/forum?id=0i6Z9N5MLY}.

\bibitem[Cai \& Zheng(2023{\natexlab{a}})Cai and Zheng]{Z23-ogda}
Cai, Y. and Zheng, W.
\newblock Doubly optimal no-regret learning in monotone games.
\newblock In Krause, A., Brunskill, E., Cho, K., Engelhardt, B., Sabato, S., and Scarlett, J. (eds.), \emph{International Conference on Machine Learning, {ICML} 2023, 23-29 July 2023, Honolulu, Hawaii, {USA}}, volume 202 of \emph{Proceedings of Machine Learning Research}, pp.\  3507--3524. {PMLR}, 2023{\natexlab{a}}.
\newblock URL \url{https://proceedings.mlr.press/v202/cai23g.html}.

\bibitem[Cai \& Zheng(2023{\natexlab{b}})Cai and Zheng]{Z23-single}
Cai, Y. and Zheng, W.
\newblock Accelerated single-call methods for constrained min-max optimization.
\newblock In \emph{The Eleventh International Conference on Learning Representations, {ICLR} 2023, Kigali, Rwanda, May 1-5, 2023}. OpenReview.net, 2023{\natexlab{b}}.
\newblock URL \url{https://openreview.net/pdf?id=HRwN7IQLUKA}.

\bibitem[Cai et~al.(2022)Cai, Oikonomou, and Zheng]{cai2022finite}
Cai, Y., Oikonomou, A., and Zheng, W.
\newblock Finite-time last-iterate convergence for learning in multi-player games.
\newblock In \emph{Advances in Neural Information Processing Systems (NeurIPS)}, 2022.

\bibitem[Combettes \& Pennanen(2004)Combettes and Pennanen]{combettes2004proximal}
Combettes, P.~L. and Pennanen, T.
\newblock Proximal methods for cohypomonotone operators.
\newblock \emph{SIAM journal on control and optimization}, 43\penalty0 (2):\penalty0 731--742, 2004.

\bibitem[Dai et~al.(2018)Dai, Shaw, Li, Xiao, He, Liu, Chen, and Song]{dai_sbeed_2018}
Dai, B., Shaw, A., Li, L., Xiao, L., He, N., Liu, Z., Chen, J., and Song, L.
\newblock Sbeed: {Convergent} reinforcement learning with nonlinear function approximation.
\newblock In \emph{International {Conference} on {Machine} {Learning}}, pp.\  1125--1134. PMLR, 2018.

\bibitem[Dang \& Lan(2015)Dang and Lan]{dang2015convergence}
Dang, C.~D. and Lan, G.
\newblock On the convergence properties of non-euclidean extragradient methods for variational inequalities with generalized monotone operators.
\newblock \emph{Computational Optimization and applications}, 60\penalty0 (2):\penalty0 277--310, 2015.

\bibitem[Daskalakis et~al.(2021)Daskalakis, Skoulakis, and Zampetakis]{daskalakis2021complexity}
Daskalakis, C., Skoulakis, S., and Zampetakis, M.
\newblock The complexity of constrained min-max optimization.
\newblock In \emph{Proceedings of the 53rd Annual ACM SIGACT Symposium on Theory of Computing (STOC)}, 2021.

\bibitem[Diakonikolas(2020)]{diakonikolas2020halpern}
Diakonikolas, J.
\newblock Halpern iteration for near-optimal and parameter-free monotone inclusion and strong solutions to variational inequalities.
\newblock In \emph{Conference on Learning Theory (COLT)}, 2020.

\bibitem[Diakonikolas et~al.(2021)Diakonikolas, Daskalakis, and Jordan]{diakonikolas2021efficient}
Diakonikolas, J., Daskalakis, C., and Jordan, M.
\newblock Efficient methods for structured nonconvex-nonconcave min-max optimization.
\newblock \emph{International Conference on Artificial Intelligence and Statistics}, 2021.

\bibitem[Du et~al.(2017)Du, Chen, Li, Xiao, and Zhou]{du_stochastic_2017}
Du, S.~S., Chen, J., Li, L., Xiao, L., and Zhou, D.
\newblock Stochastic variance reduction methods for policy evaluation.
\newblock In \emph{International {Conference} on {Machine} {Learning}}, pp.\  1049--1058. PMLR, 2017.

\bibitem[Facchinei \& Pang(2003)Facchinei and Pang]{facchinei_finite-dimensional_2003}
Facchinei, F. and Pang, J.-S.
\newblock \emph{Finite-{Dimensional} {Variational} {Inequalities} and {Complementarity} {Problems}}.
\newblock Springer, 2003.
\newblock URL \url{https://link.springer.com/book/10.1007/b97544}.

\bibitem[Fan et~al.(2024)Fan, Li, and Chen]{fan2024weaker}
Fan, Y., Li, Y., and Chen, B.
\newblock Weaker {MVI} condition: Extragradient methods with multi-step exploration.
\newblock In \emph{The Twelfth International Conference on Learning Representations}, 2024.
\newblock URL \url{https://openreview.net/forum?id=RNGUbTYSjk}.

\bibitem[Golowich et~al.(2020{\natexlab{a}})Golowich, Pattathil, and Daskalakis]{golowich2020tight}
Golowich, N., Pattathil, S., and Daskalakis, C.
\newblock Tight last-iterate convergence rates for no-regret learning in multi-player games.
\newblock \emph{Advances in neural information processing systems (NeurIPS)}, 2020{\natexlab{a}}.

\bibitem[Golowich et~al.(2020{\natexlab{b}})Golowich, Pattathil, Daskalakis, and Ozdaglar]{golowich2020last}
Golowich, N., Pattathil, S., Daskalakis, C., and Ozdaglar, A.
\newblock Last iterate is slower than averaged iterate in smooth convex-concave saddle point problems.
\newblock In \emph{Conference on Learning Theory (COLT)}, 2020{\natexlab{b}}.

\bibitem[Goodfellow et~al.(2014)Goodfellow, Pouget-Abadie, Mirza, Xu, Warde-Farley, Ozair, Courville, and Bengio]{goodfellow_generative_2014}
Goodfellow, I., Pouget-Abadie, J., Mirza, M., Xu, B., Warde-Farley, D., Ozair, S., Courville, A., and Bengio, Y.
\newblock Generative adversarial nets.
\newblock \emph{Advances in neural information processing systems}, 27, 2014.

\bibitem[Gorbunov et~al.(2022)Gorbunov, Loizou, and Gidel]{gorbunov2022extragradient}
Gorbunov, E., Loizou, N., and Gidel, G.
\newblock Extragradient method: $\mathcal{O}(1/k)$ last-iterate convergence for monotone variational inequalities and connections with cocoercivity.
\newblock In \emph{International Conference on Artificial Intelligence and Statistics (AISTATS)}, pp.\  366--402. PMLR, 2022.

\bibitem[Gorbunov et~al.(2023)Gorbunov, Taylor, Horv{\'{a}}th, and Gidel]{GorbunovTHG23}
Gorbunov, E., Taylor, A.~B., Horv{\'{a}}th, S., and Gidel, G.
\newblock Convergence of proximal point and extragradient-based methods beyond monotonicity: the case of negative comonotonicity.
\newblock In Krause, A., Brunskill, E., Cho, K., Engelhardt, B., Sabato, S., and Scarlett, J. (eds.), \emph{International Conference on Machine Learning, {ICML} 2023, 23-29 July 2023, Honolulu, Hawaii, {USA}}, volume 202 of \emph{Proceedings of Machine Learning Research}, pp.\  11614--11641. {PMLR}, 2023.
\newblock URL \url{https://proceedings.mlr.press/v202/gorbunov23a.html}.

\bibitem[Halpern(1967)]{halpern1967fixed}
Halpern, B.
\newblock Fixed points of nonexpanding maps.
\newblock \emph{Bulletin of the American Mathematical Society}, 73\penalty0 (6):\penalty0 957--961, 1967.

\bibitem[Kim(2021)]{kim_accelerated_2021}
Kim, D.
\newblock Accelerated proximal point method for maximally monotone operators.
\newblock \emph{Mathematical Programming}, 190\penalty0 (1):\penalty0 57--87, November 2021.
\newblock ISSN 1436-4646.
\newblock \doi{10.1007/s10107-021-01643-0}.
\newblock URL \url{https://doi.org/10.1007/s10107-021-01643-0}.

\bibitem[Kohlenbach(2022)]{kohlenbach2022proximal}
Kohlenbach, U.
\newblock On the proximal point algorithm and its halpern-type variant for generalized monotone operators in hilbert space.
\newblock \emph{Optimization Letters}, 16\penalty0 (2):\penalty0 611--621, 2022.

\bibitem[Korpelevich(1976)]{korpelevich_extragradient_1976}
Korpelevich, G.~M.
\newblock The extragradient method for finding saddle points and other problems.
\newblock \emph{Matecon}, 12:\penalty0 747--756, 1976.
\newblock URL \url{https://ci.nii.ac.jp/naid/10017556617/}.

\bibitem[Kovalev \& Gasnikov(2022)Kovalev and Gasnikov]{kovalev2022first}
Kovalev, D. and Gasnikov, A.
\newblock The first optimal algorithm for smooth and strongly-convex-strongly-concave minimax optimization.
\newblock \emph{Advances in Neural Information Processing Systems}, 35:\penalty0 14691--14703, 2022.

\bibitem[Lee \& Kim(2021)Lee and Kim]{lee2021fast}
Lee, S. and Kim, D.
\newblock Fast extra gradient methods for smooth structured nonconvex-nonconcave minimax problems.
\newblock In \emph{Annual Conference on Neural Information Processing Systems (NeurIPS)}, 2021.

\bibitem[Lieder(2021)]{lieder2021convergence}
Lieder, F.
\newblock On the convergence rate of the halpern-iteration.
\newblock \emph{Optimization Letters}, 15\penalty0 (2):\penalty0 405--418, 2021.

\bibitem[Liu et~al.(2019)Liu, Mroueh, Ross, Zhang, Cui, Das, and Yang]{liu2019towards}
Liu, M., Mroueh, Y., Ross, J., Zhang, W., Cui, X., Das, P., and Yang, T.
\newblock Towards better understanding of adaptive gradient algorithms in generative adversarial nets.
\newblock In \emph{International Conference on Learning Representations (ICLR)}, 2019.

\bibitem[Liu et~al.(2021)Liu, Rafique, Lin, and Yang]{liu2021first}
Liu, M., Rafique, H., Lin, Q., and Yang, T.
\newblock First-order convergence theory for weakly-convex-weakly-concave min-max problems.
\newblock \emph{Journal of Machine Learning Research}, 22\penalty0 (169):\penalty0 1--34, 2021.

\bibitem[Madry et~al.(2017)Madry, Makelov, Schmidt, Tsipras, and Vladu]{madry_towards_2017}
Madry, A., Makelov, A., Schmidt, L., Tsipras, D., and Vladu, A.
\newblock Towards deep learning models resistant to adversarial attacks.
\newblock \emph{arXiv preprint arXiv:1706.06083}, 2017.

\bibitem[Malitsky(2020)]{malitsky2020golden}
Malitsky, Y.
\newblock Golden ratio algorithms for variational inequalities.
\newblock \emph{Mathematical Programming}, 184\penalty0 (1):\penalty0 383--410, 2020.

\bibitem[Nemirovski(2004)]{nemirovski_prox-method_2004}
Nemirovski, A.
\newblock Prox-method with rate of convergence {O} (1/t) for variational inequalities with {Lipschitz} continuous monotone operators and smooth convex-concave saddle point problems.
\newblock \emph{SIAM Journal on Optimization}, 15\penalty0 (1):\penalty0 229--251, 2004.
\newblock Publisher: SIAM.

\bibitem[Nesterov(2007)]{nesterov_dual_2007}
Nesterov, Y.
\newblock Dual extrapolation and its applications to solving variational inequalities and related problems.
\newblock \emph{Mathematical Programming}, 109\penalty0 (2):\penalty0 319--344, March 2007.
\newblock ISSN 1436-4646.
\newblock \doi{10.1007/s10107-006-0034-z}.
\newblock URL \url{https://doi.org/10.1007/s10107-006-0034-z}.

\bibitem[Ouyang \& Xu(2021)Ouyang and Xu]{ouyang2021lower}
Ouyang, Y. and Xu, Y.
\newblock Lower complexity bounds of first-order methods for convex-concave bilinear saddle-point problems.
\newblock \emph{Mathematical Programming}, 185\penalty0 (1):\penalty0 1--35, 2021.

\bibitem[Parikh et~al.(2014)Parikh, Boyd, et~al.]{parikh2014proximal}
Parikh, N., Boyd, S., et~al.
\newblock Proximal algorithms.
\newblock \emph{Foundations and trends{\textregistered} in Optimization}, 1\penalty0 (3):\penalty0 127--239, 2014.

\bibitem[Park \& Ryu(2022)Park and Ryu]{park2022exact}
Park, J. and Ryu, E.~K.
\newblock Exact optimal accelerated complexity for fixed-point iterations.
\newblock \emph{arXiv preprint arXiv:2201.11413}, 2022.

\bibitem[Pethick et~al.(2022)Pethick, Latafat, Patrinos, Fercoq, and Cevher{\aa}]{pethick2022escaping}
Pethick, T., Latafat, P., Patrinos, P., Fercoq, O., and Cevher{\aa}, V.
\newblock Escaping limit cycles: Global convergence for constrained nonconvex-nonconcave minimax problems.
\newblock In \emph{International Conference on Learning Representations (ICLR)}, 2022.

\bibitem[Pethick et~al.(2023)Pethick, Xie, and Cevher]{stablencnc}
Pethick, T., Xie, W., and Cevher, V.
\newblock Stable nonconvex-nonconcave training via linear interpolation.
\newblock \emph{CoRR}, abs/2310.13459, 2023.
\newblock \doi{10.48550/ARXIV.2310.13459}.
\newblock URL \url{https://doi.org/10.48550/arXiv.2310.13459}.

\bibitem[Rockafellar(1976)]{rockafellar_monotone_1976}
Rockafellar, R.~T.
\newblock Monotone {Operators} and the {Proximal} {Point} {Algorithm}.
\newblock \emph{SIAM Journal on Control and Optimization}, 14\penalty0 (5):\penalty0 877--898, August 1976.
\newblock ISSN 0363-0129.
\newblock \doi{10.1137/0314056}.
\newblock URL \url{https://epubs.siam.org/doi/abs/10.1137/0314056}.
\newblock Publisher: Society for Industrial and Applied Mathematics.

\bibitem[Ryu \& Boyd(2016)Ryu and Boyd]{ryu2016primer}
Ryu, E.~K. and Boyd, S.
\newblock Primer on monotone operator methods.
\newblock \emph{Appl. Comput. Math}, 15\penalty0 (1):\penalty0 3--43, 2016.

\bibitem[Ryu \& Yin(2022)Ryu and Yin]{Ryu22Large}
Ryu, E.~K. and Yin, W.
\newblock \emph{Large-Scale Convex Optimization via Monotone Operators}.
\newblock Cambridge University Press, 2022.

\bibitem[Sedlmayer et~al.(2023)Sedlmayer, Nguyen, and Bot]{InIterate23}
Sedlmayer, M., Nguyen, D., and Bot, R.~I.
\newblock A fast optimistic method for monotone variational inequalities.
\newblock In Krause, A., Brunskill, E., Cho, K., Engelhardt, B., Sabato, S., and Scarlett, J. (eds.), \emph{International Conference on Machine Learning, {ICML} 2023, 23-29 July 2023, Honolulu, Hawaii, {USA}}, volume 202 of \emph{Proceedings of Machine Learning Research}, pp.\  30406--30438. {PMLR}, 2023.
\newblock URL \url{https://proceedings.mlr.press/v202/sedlmayer23a.html}.

\bibitem[Song et~al.(2020)Song, Zhou, Zhou, Jiang, and Ma]{song2020optimistic}
Song, C., Zhou, Z., Zhou, Y., Jiang, Y., and Ma, Y.
\newblock Optimistic dual extrapolation for coherent non-monotone variational inequalities.
\newblock \emph{Advances in Neural Information Processing Systems (NeurIPS)}, 33:\penalty0 14303--14314, 2020.

\bibitem[Tran-Dinh(2022)]{tran2022connection}
Tran-Dinh, Q.
\newblock The connection between nesterov's accelerated methods and halpern fixed-point iterations.
\newblock \emph{arXiv preprint arXiv:2203.04869}, 2022.

\bibitem[Tran-Dinh(2023)]{tran2023extragradient}
Tran-Dinh, Q.
\newblock Extragradient-type methods with o(1/k) last-iterate convergence rates for co-hypomonotone inclusions.
\newblock \emph{Journal of Global Optimization}, pp.\  1--25, 2023.

\bibitem[Tran-Dinh \& Luo(2021)Tran-Dinh and Luo]{tran2021halpern}
Tran-Dinh, Q. and Luo, Y.
\newblock Halpern-type accelerated and splitting algorithms for monotone inclusions.
\newblock \emph{arXiv preprint arXiv:2110.08150}, 2021.

\bibitem[Yoon \& Ryu(2021)Yoon and Ryu]{yoon2021accelerated}
Yoon, T. and Ryu, E.~K.
\newblock Accelerated algorithms for smooth convex-concave minimax problems with $\mathcal{O}(1/k^2)$ rate on squared gradient norm.
\newblock In \emph{International Conference on Machine Learning (ICML)}, pp.\  12098--12109. PMLR, 2021.

\bibitem[Yoon \& Ryu(2022)Yoon and Ryu]{yoon2022accelerated}
Yoon, T. and Ryu, E.~K.
\newblock Accelerated minimax algorithms flock together, 2022.

\bibitem[Zhou et~al.(2017)Zhou, Mertikopoulos, Bambos, Boyd, and Glynn]{zhou2017stochastic}
Zhou, Z., Mertikopoulos, P., Bambos, N., Boyd, S., and Glynn, P.~W.
\newblock Stochastic mirror descent in variationally coherent optimization problems.
\newblock \emph{Advances in Neural Information Processing Systems (NeurIPS)}, 30, 2017.

\end{thebibliography}
\bibliographystyle{icml2024}

\newpage
\appendix
\onecolumn
\appendix

\section{Additional Related Work}\label{apx:related work}
\subsection{Additional Classical Results for Convex-Concave Min-Max Optimization}
\paragraph{Convergence in Gap Function.} Nemirovski and Nesterov show that the average iterate of extragradient-type methods has $O(\frac{1}{T})$ convergence rate in terms of gap function defined as $\max_{z' \in \Z}\InAngles{F(z'), z - z'}$~\citep{nemirovski_prox-method_2004,nesterov_dual_2007}, matching the lower bound for first-order methods \citep{ouyang2021lower}. Their results extend to the more general monotone variational inequalities. Convergence to an approximate solution of \CMI implies convergences with the same rate for the gap function, while the converse is not true (see \Cref{ex:weak gap}). 

\paragraph{Convergence of the Extragradient Method in Stronger Performance Measures.} For stronger performance measures such as the norm of the operator (when $\Z = \R^n$) or the residual (in constrained setting), classical results show that the best-iterate of the extragradient method converges at a rate of $O(\frac{1}{\sqrt{T}})$~\citep{korpelevich_extragradient_1976,facchinei_finite-dimensional_2003}. Recently, the same convergence rate is shown to hold even for the last-iterate of the extragradient method~\citep{gorbunov2022extragradient,cai2022finite}. Although $O(\frac{1}{\sqrt{T}})$ convergence on the residual is optimal for all $p$-SCIL algorithms \citep{golowich2020last, golowich2020tight}, a subclass of first-order methods that includes the extragradient method and many of its variations, faster rate is possible for other first-order methods. 

\subsection{Additional Results on Nonconvex-Nonconcave Min-Max Optimization}

\paragraph{(Weak) Minty Variational Inequality} We only introduce the definitions in the unconstrained setting, as that is the setting considered by several recent results, and all convergence rates are with respect to the operator norm. The \emph{Minty variational inequality} (MVI) condition (also called coherence or variationally stable) assumes the existence of point $z^*\in \real^n$ such that \begin{align*}
    \InAngles{F(z), z - z^*} \ge 0,\quad \forall z \in \R^n
\end{align*} 
is studied in e.g., \cite{dang2015convergence,zhou2017stochastic,liu2019towards,malitsky2020golden,song2020optimistic,liu2021first}. Extragradient-type algorithms has $O(\frac{1}{\sqrt{T}})$ convergence rate for Lipschitz operators that satisfy the MVI condition \citep{dang2015convergence}. \citet{diakonikolas2021efficient} proposes a weaker condition called \emph{weak MVI}: there exits $z^*$ and $\rho < 0$ such that
\begin{align*}
    \InAngles{F(z), z - z^*} \ge \rho \cdot \InNorms{F(z)}^2, \quad \forall z \in \R^n.
\end{align*} 
The weak MVI condition includes both MVI and negative comonotonicity \citep{bauschke2021generalized} as special cases. \citet{diakonikolas2021efficient} proposes the EG+ algorithm, which has $O(\frac{1}{\sqrt{T}})$ convergence rate under the weak MVI condition in the unconstrained setting. \citet{bohm2022solving} propose OGDA+ in the same setting while \citet{pethick2022escaping} generalizes EG+ to CEG+ algorithm which has $O(\frac{1}{\sqrt{T}})$ convergence rate under weak MVI condition in general (constrained) setting. 

\section{Additional Preliminaries}
\subsection{Monotone Inclusion and Variational Inequality}\label{sec:preliminary-MI&VI}

\paragraph{Variational Inequality.} An important special case of \CMI is the \emph{variational inequality} (VI) problem where the maximally monotone operator $A$ is chosen to be a normal cone operator $N_\Z$ for a closed and convex feasible set $\Z$. The VI problem has two variants. The \emph{Stampacchia Variational Inequality} (SVI) problem is to find $z^* \in \Z$ such that 
\begin{equation}\label{SVI}\tag{SVI}
    \InAngles{F(z^*), z^* - z} \le 0, \quad \forall z \in \Z.
\end{equation}
Such $z^*$ is called a \emph{strong solution} to VI. The \emph{Minty Variational Inequality} (MVI) problem is to find $z^* \in \Z$ such that  
\begin{equation}\label{MVI}\tag{MVI}
    \InAngles{F(z), z^* - z} \le 0, \quad \forall z \in \Z.
\end{equation}
Such $z^*$ is called a \emph{weak solution} to VI. 

When $F$ is continuous, then every solution to \ref{MVI} is also a solution to \ref{SVI}. When $F$ is monotone, every solution to \ref{SVI} is also a solution to (\ref{MVI}), and thus, the two solution sets are equivalent. We remark that the solution set to \CMI is the same as the solution set to \ref{SVI}.
For a convex set $\Z$, $\partial \indSet_\Z$ is a maximally monotone set-valued operator, and as such, \CMI generilizes MVI, and convex-concave min-max optimization problems for ($\rho=0$ and $A=\partial\indSet_\Z$) and serves as computational frameworks for numerous important applications in fields such as economics, engineering, probability and statistics, and machine learning~\cite{facchinei_finite-dimensional_2003,bauschke_convex_2011,ryu2016primer}.

Although the set of exact solutions to \CMI for $A=\partial \indSet_\Z$ coincides with the set of exact solutions to the corresponding variational inequality, the approximate solutions to these two problems differ due to different performance measures. An approximate solution to the comonotone inclusion problem~\CMI must have a small natural residual,\footnote{The natural residual of a point $z$ is simply the operator norm $\InNorms{F(z)}$ in the unconstrained case, i.e., $\Z=\real^n$, and equals to the norm of its natural map $z-\Pi_\Z[z-F(z)]$~\cite{facchinei_finite-dimensional_2003}.} while an approximate solution to the variational inequality only satisfies a weaker condition, i.e., its gap function is small.\footnote{There are several variations of the gap function. Depending on the exact definition, a small gap function value could mean an approximate \emph{weak} solution, i.e., approximately solve the Minty Variational Inequality (MVI), or an approximate \emph{strong} solution, i.e., approximately solve the Stampacchia Variational Inequality (SVI). Formal definitions and discussions are in Section~\ref{sec:preliminary-MI&VI}.} Indeed, it is well-known that an approximate solution to \CMI is also an approximate solution to the monotone variational equality, but the reverse is not true in general.

\paragraph{Approximate Solutions.}
We say $z \in \Z$ is an $\epsilon$-approximate solution to \ref{SVI} or \ref{MVI} if 
\begin{align*}
    &\InAngles{F(z), z - z'} \le \epsilon,  \forall z' \in \Z, \text{   or   } \\ &\InAngles{F(z'), z - z'} \le \epsilon,  \forall z' \in \Z, \text{ respectively.}
\end{align*}
When $F$ is monotone, it is clear that every $\epsilon$-approximate solution to \ref{SVI} is also an $\epsilon$-approximate solution to \ref{MVI}; but the reverse does not hold in general. 
When $F$ is monotone and $\Z$ is bounded by $D$, then any $\frac{\epsilon}{D}$-approximate solution to \CMI is an $\epsilon$-approximate solution to \ref{SVI} \citep[Fact 1]{diakonikolas2020halpern}. Note that when $\Z$ is unbounded, neither \ref{SVI} nor \ref{MVI} can be approximated. If we restrict the domain to be a bounded subset of (possibly unbounded) $\Z$, then we can define the (restricted) gap functions~\citep{nesterov_dual_2007} as
\begin{align*}
    \gap^{SVI}_{F,D}(z) := \max_{z' \in \Z \cap \mathcal{B}(z,D)} \InAngles{F(z), z - z'},\\
    \gap^{MVI}_{F,D}(z) := \max_{z' \in \Z \cap \mathcal{B}(z,D)} \InAngles{F(z'), z - z'}.
\end{align*}
The $O(\frac{1}{T})$ convergence rate for extragradient-type algorithm \citep{nemirovski_prox-method_2004,nesterov_dual_2007} is provided in terms of $\gap^{MVI}_{F,D}(z)$, which means convergence to an approximate \emph{weak} solution. Prior to our work, the $O(\frac{1}{T})$ convergence rate on $\gap^{SVI}_{F,D}(z)$ was only known in the unconstrained setting \citep{yoon2021accelerated}. When $F$ is monotone, then the tangent residual $\Ham_{F,D}(z) \le \frac{\epsilon}{D}$ (definition in section~\ref{sec:convergence criteria}) implies $\gap_{F,D}^{SVI}(z) \le \epsilon$ \citep[Lemma 2]{cai2022finite}. Therefore, our results (\Cref{thm: composite-EAG last-iterate rate}, \ref{thm: composite-FEG last-iterate rate}, and \ref{thm:proj-EAG last rate}) for the tangent residual and the natural residual also imply an $O(\frac{1}{T})$ convergence rate on $\gap_{F,D}^{SVI}(z)$ for \ref{SVI}. In the following, we show that the converse is not true in general.

\begin{example}[Gap function is weaker than natural residual]\label{ex:weak gap}
Consider an instance of the Monotone VI problem on the identity operator $F(x)=x$ in $\Z=[0,1]$.
\begin{itemize}
    \item Observe that the natural residual on $x\in \Z$ is $\InNorms{x - \Pi_{\Z}[x-F(x)]}=x$.
    \item Moreover, since $\Z=[0,1]$, observe that for any $x\in \Z$ and $D\geq 0$,
    \begin{align*}
    \gap_{F,D}^{SVI}(x)\leq \gap_{F,1}^{SVI}(x)
    =&\max_{x' \in [0,1]} x\cdot ( x - x')=x^2, \text{ and } \\
    \gap_{F,D}^{MVI}(x)\leq \gap_{F,1}^{MVI}(x)
    =&\max_{x' \in [0,1]} x'\cdot (x - x') = \frac{x^2}{4}.
    \end{align*}
\end{itemize}
As a result, any algorithm with $O(\frac{1}{T})$ convergence rate with respect to the gap function only implies a $O(\frac{1}{\sqrt{T}})$ convergence rate for the corresponding \CMI problem or the natural residual. 
\end{example}

\subsection{Proof of \Cref{fact:tangent residual smaller than natural residual}}
\label{app:ts<=ns}
\begin{proof}
For any $c \in A(z)$, we have
\begin{align*}
    r^{nat}_{F,A}(z) &= \InNorms{z - J_A\InBrackets{z - F(z)}} \\
    & = \InNorms{J_A\InBrackets{z + c}- J_A\InBrackets{z - F(z)}} \tag{$z = J_A[z+c]$} \\
    & \le \InNorms{F(z) + c}. \tag{non-expansiveness of $J_A$}
\end{align*}
Thus $r^{nat}_{F,A}(z) \le \min_{c \in A(z)} \InNorms{F(z) + c} = \Ham_{F,A}(z)$. 
\end{proof}

\section{Missing Proofs in \Cref{sec:composite-EAG}}
\label{app: composite-EAG}

\subsection{Proof of \Cref{lemma:composite-EAG V_1 bound}}
\begin{proof}
    We recall that $z_1 = J_{\eta A}[z_0 - \eta F(z_0)]$. Thus by non-expansiveness of the resolvent $J_{\eta A}$, we have for any $c_0 \in A(z_0)$, 
    \[
    \InNorms{z_1 - z_0} = \InNorms{J_{\eta A}[z_0 - \eta F(z_0)] - J_{\eta A}[z_0 - \eta c_0]} \le  \eta \InNorms{F(z_0) + c_0}. 
    \]
    Since the above holds for all $c_0 \in A(z_0)$, we get $\InNorms{z_1 - z_0} \le \eta r^{tan}_{F,A}(z_0)$. By definition of $c_1$, we also have
    \begin{align*}
        \InNorms{\eta F(z_1) + \eta c_1}^2 & = \InNorms{z_0 - z_1 + \eta F(z_1) - \eta F(z_0)}^2 \\
        &\le 2\InNorms{z_0 - z_1}^2 + 2 \InNorms{\eta F(z_1) - \eta F(z_0)}^2 \\
        &\le 4\InNorms{z_0 - z_1}^2.
    \end{align*}
    Now by definition of $V_1$, we have
    \begin{align*}
        V_1 &= \InNorms{\eta (F(z_1) + c_1)}^2 + \eta \InAngles{F(z_1) + c_1, z_1 - z_0} \\
        &\le \InNorms{\eta (F(z_1) + c_1)}^2 + \InNorms{\eta (F(z_1) + c_1)} \InNorms{z_1 - z_0} \\
        &\le 6\InNorms{z_1 - z_0}^2.
    \end{align*}
    This completes the proof.
\end{proof}

\subsection{Proof of \Cref{thm:composite-EAG monotone potential}}
\begin{proof}
    We require the following fact. 
    \begin{fact}\label{fact:step size}
        For any $L > 0$ and $\rho \ge - \frac{1}{20L}$,  $\eta = \frac{0.31}{L}$ satisfies
    \begin{align}
     1 + \frac{4\rho}{\eta} - \InParentheses{3 - \frac{4\rho}{\eta}}\cdot \eta^2 L^2 \ge \frac{9}{2000}, \label{eq:comonotone step size}
    \end{align}
    and $\frac{\rho}{\eta} > -\frac{1}{4}$
    \end{fact}
    
    Our plan is to show $V_k - V_{k+1}$ minus two non-negative terms is greater than $- \frac{1}{2}\InNorms{\eta F(z_{k+1}) + c_{k+1}}^2 + \frac{9k(k+1)}{4000}\InNorms{z_{k+\half} - z_{k+1}}^2$. 
    \paragraph{Non-negative Terms} Let $p = \frac{1}{3}$ and $c = -\frac{4p\rho}{\eta} \ge 0$. Since $F$ is $L$-Lipschitz,  we have
\begin{align*}
    \eta^2 L^2 \cdot \InNorms{z_{k+\half} - z_{k+1}}^2 - \InNorms{\eta F(z_{k+\half}) - \eta F(z_{k+1})}^2 \ge 0.
\end{align*}
Multiplying both sides of the above inequality by $(1+c)$ and rearranging terms, we get
\begin{align}
&p \cdot \InNorms{z_{k+\half} - z_{k+1}}^2 - \InNorms{\eta F(z_{k+\half}) - \eta F(z_{k+1})}^2 \nonumber \\
& + \InParentheses{(1+c)\eta^2 L^2 - p} \cdot \InNorms{z_{k+\half}}^2 - c \cdot \InNorms{\eta F(z_{k+\half}) - \eta F(z_{k+1})}^2 \ge 0. \label{eq:composite-EAG Lip}
\end{align}
Moreover, since $c_k \in A(z_k)$ and $c_{k+1} \in A(z_{k+1})$ and the fact that $F + A$ is $\rho$-comonotone, we have
\begin{align}
    \InAngles{\eta F(z_{k+1}) + \eta c_{k+1} -\eta F(z_k) - \eta c_k, z_{k+1} - z_k} - \frac{\rho}{\eta} \InNorms{\eta F(z_{k+1}) + \eta c_{k+1} -\eta F(z_k) - \eta c_k}^2 \ge 0. \label{eq:composite-EAG comonotone}
\end{align}

\paragraph{Descent Identity} We have the following identity holds by \Cref{prop:EAG identity} (\Cref{eq:identity composite-EAG}) since $\eta c_k = z_k - \eta F(z_k) + \frac{1}{k+1}(z_0 - z_k) - z_{k+\half}$ and $\eta c_{k+1} = z_k - \eta F(z_{k+\half}) + \frac{1}{k+1}(z_0 - z_k) - z_{k+1}$. 
\begin{align}
    &V_k - V_{k+1} - \frac{k(k+1)}{2p} \cdot \LHSI~\eqref{eq:composite-EAG Lip} - k(k+1) \cdot \LHSI~\eqref{eq:composite-EAG comonotone}  \nonumber \\
    &= \frac{(1-p)k(k+1)}{2p} \InNorms{\eta F(z_{k+\half}) - \eta F(z_{k+1})}^2 \label{eq:composite-EAG RHS-1} \\
    &\quad + (k+1) \cdot \InAngles{\eta F(z_{k+\half}) - \eta F(z_{k+1}), \eta F(z_{k+1}) + \eta c_{k+1}} \label{eq:composite-EAG RHS-2}\\
    &\quad + \frac{k(k+1)}{2p}\InParentheses{ (p - (1+c)\eta^2 L^2) \cdot \InNorms{z_{k+\half}- z_{k+1}}^2 +c \InNorms{\eta F(z_{k+\half}) - \eta F(z_{k+1})}^2} \label{eq:composite-EAG RHS-3}\\
    &\quad + \frac{k(k+1)\rho}{\eta} \InNorms{\eta F(z_{k+1}) + \eta c_{k+1} -\eta F(z_k) - \eta c_k}^2. \label{eq:composite-EAG RHS-4}
\end{align}
Since $\InNorms{a}^2 + \InAngles{a,b} = \InNorms{a+\frac{b}{2}}^2 -\frac{\InNorms{b}^2}{4}$, we have
\begin{align*}
    &\text{Expression} (\ref{eq:composite-EAG RHS-1}) + \text{Expression} (\ref{eq:composite-EAG RHS-2}) \\
    =& \InNorms{\sqrt{\frac{(1-p)k(k+1)}{2p}}\cdot \InParentheses{\eta F(z_{k+\half}) - \eta F(z_{k+1})} + \sqrt{\frac{p(k+1)}{2(1-p) k}} \cdot \InParentheses{\eta F(z_{k+1}) + \eta c_{k+1}}}^2 \notag \\
     & - \frac{k+1}{2k} \cdot \frac{p}{1-p} \InNorms{\eta F(z_{k+1}) + \eta c_{k+1}}^2\\ 
     & \ge -\frac{p}{1-p} \InNorms{\eta F(z_{k+1}) + \eta c_{k+1}}^2. \tag{$k \ge 1$}\\
     & = - \frac{1}{2} \InNorms{\eta F(z_{k+1}) + \eta c_{k+1}}^2.
\end{align*}
Now it remains to give a non-negative lower bound of Expression \eqref{eq:composite-EAG RHS-3} + Expression \eqref{eq:composite-EAG RHS-4}. Recall that $p = \frac{1}{3}$ and $c = -\frac{4\rho p}{\eta}$, thus 
\begin{align*}
   &(\frac{2}{k(k+1)}) \cdot \InParentheses{ \text{Expression } \eqref{eq:proj-EAG low deg RHS 4} + \text{Expression } \eqref{eq:proj-EAG low deg RHS 5}} \\
   &= \InParentheses{1 - \InParentheses{3 - \frac{4\rho}{\eta}}\cdot \eta^2 L^2}\cdot \InNorms{z_{k+\half} - z_{k+1}}^2 - \frac{4\rho}{\eta} \cdot  \cdot \InNorms{\eta F(z_{k+\half})-\eta F(z_{k+1})}^2 \\
   & \quad + \frac{2\rho}{\eta}\cdot \InNorms{\eta F(z_{k+1}) + \eta c_{k+1} -\eta F(z_k) - \eta c_k}^2   \\
   & \ge \InParentheses{1 - \InParentheses{3 - \frac{4\rho}{\eta}}\cdot \eta^2 L^2}\cdot \InNorms{z_{k+\half} - z_{k+1}}^2 + \frac{4\rho}{\eta}\cdot \InNorms{\eta F(z_{k+\half}) + \eta c_{k+1}  - \eta F(z_k) - \eta c_k}^2 
   \tag{$  \InNorms{A}^2 - \frac{1}{2}\InNorms{B}^2 \ge -\InNorms{A+B}^2$}\\
   & = \InParentheses{1 + \frac{4\rho}{\eta} - \InParentheses{3 - \frac{4\rho}{\eta}}\cdot \eta^2 L^2} \InNorms{z_{k+\half} - z_{k+1}}^2 \\
   &\ge \frac{9}{2000} \InNorms{z_{k+\half} - z_{k+1}}^2,
\end{align*}
where in the last equality we use the fact that $z_{k+\half} - z_{k+1} = \eta F(z_{k+\half}) + \eta c_{k+1}  - \eta F(z_k) - \eta c_k$ which holds by the update rule of \ref{composite-EAG}, and in the last inequality we use \Cref{fact:step size}.
\end{proof}

\subsection{Proof of \Cref{lemma:composite-EAG lower bound V_k}}
\begin{proof}
    Fix any $k \ge 1$. Since $0 \in F(z^*) + A(z^*)$, by $\rho$-comonotonicity and \Cref{fact:step size}, we have 
    \begin{equation}
        \InAngles{\eta F(z_k) + \eta c_k, z_k - z^*} \ge \frac{\rho}{\eta} \InNorms{\eta F(z_k) + \eta c_k}^2 > -\frac{1}{4} \InNorms{\eta F(z_k) + \eta c_k}^2. \label{eq:composite EAG comonotone-1}
    \end{equation}
    By definition of $V_k$, we have
    \begin{align*}
        V_k &= \frac{k(k+1)}{2}\cdot \InNorms{\eta F(z_k) + \eta c_k}^2 + k\cdot \InAngles{\eta F(z_k) + \eta c_k, z_k - z_0} \\
        &=  \frac{k(k+1)}{2}\cdot \InNorms{\eta F(z_k) + \eta c_k}^2 + k\cdot \InAngles{\eta F(z_k) + \eta c_k, z_k - z^*} + k\InAngles{\eta F(z_k) + \eta c_k, z^* - z_0} \\
        &\ge \frac{k(k+1)}{2}\cdot \InNorms{\eta F(z_k) + \eta c_k}^2 -\frac{1}{4}\InNorms{\eta F(z_k) + \eta c_k}^2 + k\InAngles{\eta F(z_k) + \eta c_k, z^* - z_0}  \tag{By \eqref{eq:composite EAG comonotone-1}} \\
        &\ge \frac{k(k+\frac{1}{2})}{2}\cdot \InNorms{\eta F(z_k) + \eta c_k}^2 - \frac{k^2}{4} \InNorms{\eta F(z_k) + \eta c_k}^2 - \InNorms{z_0 - z^*}^2 \\
        &= \frac{k(k+1)}{4}\cdot \InNorms{\eta F(z_k) + \eta c_k}^2 - \InNorms{z_0 - z^*}^2,
    \end{align*}
    where in the second last inequality, we apply $\InAngles{a, b} \ge -\frac{\alpha}{4}\InNorms{a^2} - \frac{1}{\alpha}\InNorms{b^2}$ with $a = \eta F(z_k) + \eta c_k$, $b = z^* -z_0$, and $\alpha = k$. This completes the proof.
\end{proof}

\subsection{Proof of \Cref{corollary:composite-EAG bounded summation}}
\begin{proof}
    By \Cref{thm:composite-EAG monotone potential}, we have
    \[
    \frac{9k(k+1)}{4000}\InNorms{z_{k+1} - z_{k+\half}}^2 \le V_k - V_{k+1} + \frac{1}{2}\InNorms{\eta F(z_{k+1})+\eta c_{k+1}}^2
    \]
    Telescoping the above inequality for $k = 1, 2, \ldots, T$ gives
    \begin{align*}
        \frac{9}{4000} \sum_{k=1}^T k(k+1) \InNorms{z_{k+1} - z_{k+\half}}^2 &\le V_1- V_{T+1} + \frac{1}{2} \sum_{k=1}^T \InNorms{\eta F(z_{k+1})+\eta c_{k+1}}^2 \\
        &\le V_1 + \InNorms{z_0 -z^*}^2 + 10H^2 \cdot \InParentheses{\sum_{k=1}^T \frac{1}{(t+1)^2}} \\
        & \le 9H^2.
    \end{align*}
    where in the second inequality we use (1) $V_{T+1} \ge -\InNorms{z_0 - z^*}^2$ by \Cref{lemma:composite-EAG lower bound V_k} and (2) $\InNorms{\eta F(z_{k+1})+\eta c_{k+1}}^2 \le \frac{20H^2)}{(t+1)^2}$ by \Cref{thm:proj-EAG monotone potential at constrained}, in the last inequality we use $\sum_{k=1}^\infty \frac{1}{(t+1)^2} = \frac{\pi^2}{6} -1 \le \frac{3}{4}$. This concludes $\sum_{k=1}^\infty k(k+1) \InNorms{z_{k+1} - z_{k+\half}}^2 \le 4000 H^2$. Since $\frac{1}{2}(k+1)^2 \le k(k+1)$ for all $k \ge 1$, it further implies $\InNorms{z_{k+1} - z_{k+\half}}^2 \le \frac{8000 H^2}{(k+1)^2}$ for all $k \ge 1$.
\end{proof}

\subsection{Proof of \Cref{thm: composite-EAG merging path}}
\begin{proof}
    Using the update rule of \ref{composite-EAG} and \ref{OHM} and recall: $\eta c_{k+1} := z_{k} - \eta F(z_{k+\half}) + \beta_k (z_0 - z_{k}) - z_{k+1}$ and $\eta d_{k+1} = w_{k+\half} - \eta F(w_{k+1}) - w_{k+1}$ we have
    \begin{align*}
        z_{k+1} - w_{k+1} &= \InParentheses{\beta_k z_0 + (1-\beta_k)z_k - \eta F(z_{k+\half}) - \eta c_{k+1}} - \InParentheses{\beta_k w_0 + (1-\beta_k)w_k -\eta F(w_{k+1}) - \eta d_{k+1}}\\
        & = (1-\beta_k) (z_k - w_k) + \eta \InParentheses{F(w_{k+1}) + d_{k+1} - F(z_{k+\half}) - c_{k+1}}, \tag{$w_0 = z_0$}
    \end{align*}
    which implies
    \begin{align*}
        \InNorms{z_{k+1} - w_{k+1}}^2 &= (1-\beta_k)^2 \InNorms{z_k - w_k}^2 +  \eta^2 \InNorms{F(w_{k+1}) + d_{k+1} - F(z_{k+\half}) - c_{k+1}}^2 \\
        & \quad + \underbrace{{2\InAngles{(1-\beta_k)(z_k - w_k), \eta \InParentheses{F(w_{k+1}) + d_{k+1} - F(z_{k+\half}) - c_{k+1}}}}}_{\textbf{I}}.
    \end{align*}
    We focus on term \textbf{I}.  We can verify
    \[
    z_{k+\half} - w_{k+1} = (1-\beta_k)(z_k - w_k) - \eta\InParentheses{ F(z_k) + c_{k} -  F(w_{k+1}) -  d_{k+1}}. 
    \]
    Thus term \textbf{I} can be rewritten as 
    \begin{align*}
        \textbf{I} &= 2\InAngles{(1-\beta_k)(z_k - w_k), \eta \InParentheses{F(w_{k+1}) + d_{k+1} - F(z_{k+\half}) - c_{k+1}}} \\
        &=2\InAngles{ z_{k+\half} - w_{k+1} + \eta\InParentheses{ F(z_k) + c_{k} -  F(w_{k+1}) -  d_{k+1}}, \eta \InParentheses{F(w_{k+1}) + d_{k+1} - F(z_{k+\half}) - c_{k+1}}} \\
        & = 2\eta^2 \InAngles{F(z_k) + c_k -  F(w_{k+1}) -  d_{k+1}, F(w_{k+1}) + d_{k+1} - F(z_{k+\half}) - c_{k+1}} + \\
        &\quad + \underbrace{2 \eta \InAngles{z_{k+\half} - w_{k+1},  F(w_{k+1}) + d_{k+1} - F(z_{k+\half}) - c_{k+1} } }_{\textbf{$A_k$}}.
    \end{align*}
    Combining the above (we keep term \textbf{$A_k$} for now), we get 
    \begin{align*}
        &\InNorms{z_{k+1} - w_{k+1}}^2\\
        &\le (1-\beta_k)^2 \InNorms{z_k - w_k}^2 +  \eta^2 \InNorms{F(w_{k+1}) + d_{k+1} - F(z_{k+\half}) - c_{k+1}}^2 \\
        &\quad + 2\eta^2 \InAngles{F(z_k) + c_k -  F(w_{k+1}) -  d_{k+1}, F(w_{k+1}) + d_{k+1} - F(z_{k+\half}) - c_{k+1}} + \textbf{$A_k$} \\
        &= (1-\beta_k)^2 \InNorms{z_k - w_k}^2 - \eta^2 \InNorms{F(w_{k+1}) + d_{k+1}}^2 + 2\eta^2 \InAngles{F(z_k) + c_k, F(w_{k+1}) + d_{k+1}  } \\
        &\quad - 2\eta^2 \InAngles{F(z_k) + c_k, F(z_{k+\half}) + c_{k+1}} + \eta^2 \InNorms{F(z_{k+\half}) + c_{k+1}}^2 + \textbf{$A_k$}. \\
        &\le  (1-\beta_k)^2 \InNorms{z_k - w_k}^2 + \eta^2 \InNorms{F(z_k) + c_k}^2 \tag{We use $-a^2 + 2ab \le b^2$} \\
        &\quad - 2\eta^2 \InAngles{F(z_k) + c_k, F(z_{k+\half}) + c_{k+1}} + \eta^2 \InNorms{F(z_{k+\half}) + c_{k+1}}^2 + \textbf{$A_k$} \\
        & = (1-\beta_k)^2 \InNorms{z_k - w_k}^2 + \eta^2 \InNorms{F(z_k) + c_k - F(z_{k+\half}) - c_{k+1}}^2 + \textbf{$A_k$} \\
        & = (1-\beta_k)^2 \InNorms{z_k - w_k}^2 +  \InNorms{z_{k+1} - z_{k+\half}}^2 + \textbf{$A_k$},
    \end{align*}
    where in the last equality we use the fact that $z_{k+1} - z_{k+\half} = \eta(F(z_k) + c_k - F(z_{k+\half}) - c_{k+1})$ by update rule of \ref{composite-EAG}. Plugging $\beta_k = \frac{1}{k+1}$ and multiplying both sides with $(k+1)^2$ gives
    \[
    (k+1)^2\InNorms{z_{k+1} - w_{k+1}}^2 \le k^2\InNorms{z_k - w_k}^2 + (k+1)^2 \InNorms{z_{k+1} - z_{k+\half}}^2 + (k+1)^2\textbf{$A_k$}.
    \]
    Telescoping the above gives 
    \begin{align*}
        &(k+1)^2\InNorms{z_{k+1} - w_{k+1}}^2 \le \InNorms{z_1 - w_1}^2 + \sum_{t=1}^k (t+1)^2 \InNorms{z_{t+1} - z_{t+\half}}^2 + \sum_{t=1}^k (t+1)^2 \textbf{$A_t$}. \\ 
        &\Rightarrow \InNorms{z_{k+1} - w_{k+1}}^2 \le \frac{\InNorms{z_1 - w_1}^2}{(k+1)^2} + \frac{1}{(k+1)^2}\sum_{t=1}^k (t+1)^2 \InNorms{z_{t+1} - z_{t+\half}}^2 + \frac{1}{(k+1)^2}\sum_{t=1}^k (t+1)^2 \textbf{$A_t$}
    \end{align*}
    It remains to bound $\frac{1}{(k+1)^2}\sum_{t=1}^k (t+1)^2 \InNorms{z_{t+1} - z_{t+\half}}^2$ and $\frac{1}{(k+1)^2}\sum_{t=1}^k (t+1)^2 \textbf{$A_t$}$. 

    For the first term, using \Cref{corollary:composite-EAG bounded summation}, we have
    \begin{align*}
        \sum_{t=1}^k (t+1)^2 \InNorms{z_{t+1} - z_{t+\half}}^2 &\le 2 \sum_{t=1}^k t(t+1) \InNorms{z_{t+1} - z_{t+\half}}^2 \le 8000 H^2.
    \end{align*}

    For the term with $A_t$, we need a more careful analysis.  We decompose $A_k = B_k + C_k + D_k$ as follows.
    \begin{align*}
        A_k &= 2 \eta \InAngles{z_{k+\half} - w_{k+1},  F(w_{k+1}) + d_{k+1} - F(z_{k+\half}) - c_{k+1}} \\
        &= 2 \eta \InAngles{z_{k+\half} -  z_{k+1},  F(w_{k+1}) + d_{k+1} - F(z_{k+\half}) - c_{k+1}} \\
        & \quad + 2\eta \InAngles{z_{k+1} - w_{k+1}, F(w_{k+1}) + d_{k+1} - F(z_{k+\half}) - c_{k+1}} \\
        &= \underbrace{2 \eta \InAngles{z_{k+\half} - z_{k+1},  F(w_{k+1}) + d_{k+1} - F(z_{k+\half}) - c_{k+1}}}_{B_k} + \underbrace{2\eta \InAngles{z_{k+1} - w_{k+1}, F(z_{k+1}) - F(z_{k+\half})}}_{C_k} \\
        &\quad  + \underbrace{2\eta \InAngles{ z_{k+1} - w_{k+1}, F(w_{k+1}) + d_{k+1} - F(z_{k+1}) - c_{k+1} }}_{D_k}.
    \end{align*}

     For $B_k$, we have
    \begin{align*}
        B_k &= 2\eta \InAngles{ z_{k+\half} - z_{k+1}, F(w_{k+1}) + d_{k+1} - F(z_{k+\half}) - c_{k+1} } \\
        &\le 2 \eta \InNorms{z_{k+1} - z_{k+\half}} \InParentheses{\InNorms{F(w_{k+1}) + d_{k+1}} + \InNorms{F(z_{k+1}) - F(z_{k+\half})}+\InNorms{F(z_{k+1}) + c_{k+1}}}\\
        & \le 2\eta L \InNorms{z_{k+1} - z_{k+\half}}^2 + 2 \eta \InNorms{z_{k+1} - z_{k+\half}} \InParentheses{\InNorms{F(w_{k+1}) + d_{k+1}} +\InNorms{F(z_{k+1}) + c_{k+1}}}\\
        &\le  \frac{16000 H^2}{(k+1)^2} + 2\eta \sqrt{\frac{8000 H^2}{(k+1)^2}} \cdot \InParentheses{ \frac{2\InNorms{z_0 - z^*}}{\eta (k+1)} + \frac{\sqrt{20 H^2}}{\eta(k+1)} } \\
        &\le \frac{17400 H^2}{(k+1)^2},
    \end{align*}
    where in the second last inequality we use the last-iterate convergence rate of \ref{OHM} (\Cref{thm:OHM}) and \ref{composite-EAG} (\Cref{thm: composite-EAG last-iterate rate}) as well as the bound on $\InNorms{z_{k+1} - z_{k+\half}}^2$ (\Cref{corollary:composite-EAG bounded summation}).

    For $C_k$, we have 
    \begin{align*}
        C_k &= 2\eta \InAngles{z_{k+1} - w_{k+1}, F(z_{k+1}) - F(z_{k+\half})} \\
        &\le 2\eta \InNorms{z_{k+1} - w_{k+1}} \InNorms{ F(z_{k+1}) -  F(z_{k+\half})} \\
        &\le 2\InNorms{z_{k+1}- w_{k+1}}\InNorms{z_{k+1} - z_{k+\half}}\tag{$F$ is $L$-Lipschitz and $\eta L\le 1$}\\
        & \le 400H \InNorms{z_{k+1} - z_{k+\half}},
    \end{align*}
    where in the last inequality we use $\InNorms{z_{k+1}- w_{k+1}} \le 200H$ is bounded by \Cref{coro:composite-EAG OHM bounded}. 
    
    For $D_k$, since $F+ N_\Z$ is $\rho$-comonotone, we have
    \begin{align*}
        D_k &= 2 \eta \InAngles{z_{k+1} - w_{k+1},  F(w_{k+1}) + d_{k+1} - F(z_{k+1}) - c_{k+1}} \\
        &\le -2\rho\eta \InNorms{F(w_{k+1}) + d_{k+1} - F(z_{k+1}) - c_{k+1}}^2 \\
        &\le -4\rho\eta \InNorms{F(w_{k+1}) + d_{k+1}}^2-4\rho\eta \InNorms{F(z_{k+1}) + c_{k+1}}^2\\
        &\le  -\frac{4\rho}{\eta} \cdot \frac{4\InNorms{z_0 - z^*}^2}{(k+1)^2} -\frac{4\rho}{\eta} \cdot \frac{20 H^2}{(k+1)^2} \tag{convergence rate of \ref{OHM} and \Cref{thm: composite-EAG last-iterate rate}} \\
        &\le \frac{24 H^2}{(k+1)^2}. \tag{$\frac{\rho}{\eta} \ge - \frac{1}{4}$}
    \end{align*} 

    Combining the above bounds for $B_k, C_k, D_k$, we get
    \begin{align*}
        (k+1)^2 A_k \le 17424 H^2 + 400H(k+1)^2\InNorms{z_{k+1} - z_{k+\half}}.
    \end{align*}
    Since $\sum_{t=1}^\infty (t+1)^2 \InNorms{z_{t+1} - z_{t+\half}}^2 \le 3200H^2$, using Cauchy-Schwartz, we get
    \begin{align*}
         \sum_{t=1}^k (t+1)^2 \InNorms{z_{t+1} - z_{t+\half}} &\le \sqrt{ \InParentheses{\sum_{t=1}^k (t+1)^2 \InNorms{z_{t+1} - z_{t+\half}}^2} \cdot \InParentheses{\sum_{t=1}^k (t+1)^2}} \\
         &\le 100H (k+1)^{3/2}.
    \end{align*} 
    Thus $\frac{1}{(k+1)^2} \sum_{t=1}^k (t+1)^2 \InNorms{z_{t+1} - z_{t+\half}} \le 100H (k+1)^{-\frac{1}{2}}$.
    
    Hence we get 
    \begin{align*}
        \frac{1}{(k+1)^2} \sum_{t=1}^k (t+1)^2 A_t = \frac{17424 H^2}{(k+1)^2} + \frac{40000 H^2}{(k+1)^{\half}}
    \end{align*}
    Combining all the above, we conclude that for all $k$, 
    \begin{align*}
        \InNorms{z_{k+1} - w_{k+1}}^2 &\le \frac{\InNorms{z_1 - w_1}^2}{(k+1)^2} + \frac{1}{(k+1)^2}\sum_{t=1}^k (t+1)^2 \InNorms{z_{t+1} - z_{t+\half}}^2 + \frac{1}{(k+1)^2}\sum_{t=1}^k (t+1)^2 \textbf{$A_t$} \\
        &\le \frac{40000 H^2}{(k+1)^{\half}} + \frac{30000H^2}{(k+1)^2}.
    \end{align*}
    This completes the proof.
\end{proof}

\subsubsection{Bounded Iterates of \ref{OHM} and \ref{composite-EAG}}
In this section, we prove auxiliary results needed in the proof of \Cref{thm: composite-EAG merging path}. 
We show that the iterates of \ref{OHM} and \ref{composite-EAG} all have bounded distance away from the initial point $z_0$. 
\begin{lemma}[Bounded Iterates of \ref{OHM}]
\label{lemma:OHM bounded iterates}
    The iterates $\{w_k\}_{k \ge 0}$ of \ref{OHM} satisfies for all $k \ge 0$
    \begin{align*}
        \InNorms{w_{k} - w_0}^2 &\le 4 \InNorms{w_0 - w^*}^2.
    \end{align*}
\end{lemma}
\begin{proof}
    When $k = 1$, by nonexpansiveness of the resolvent $J_{\eta (F + A)}$, we know 
    \[
    \InNorms{w_1 - w^*} = \InNorms{J_{\eta (F + A)}[w_0] - J_{\eta (F + A)}[w^*]} \le \InNorms{w_0 - w^*}. 
    \]
    This implies 
    \[
    \InNorms{w_1 - w_0}^2 \le 2\InNorms{w_1 - w^*}^2 + 2\InNorms{w_0 - w^*}^2 \le 4 \InNorms{w_0 - w^*}^2.
    \]
    For $k \ge 1$, by definitions of \ref{OHM} and $d_{k+1}$, we have
    \begin{align*}
        \InNorms{w_{k+1} - w_0}^2 &= \InNorms{\frac{k}{k+1}(w_k - w_0) - \eta( F(w_{k+1}) + d_{k+1})}^2 \\
        &\le \InParentheses{1 + \frac{1}{k}}\frac{k^2}{(k+1)^2} \InNorms{w_k - w_0}^2 +  (1 + k) \eta^2 \InNorms{( F(w_{k+1}) + d_{k+1})}^2 \tag{Young's inequality} \\
        &\le \frac{k}{k+1} \InNorms{w_k - w_0}^2 + \frac{4\InNorms{w_0 - w^*}^2}{k+1}. \tag{\Cref{thm:OHM}}
    \end{align*}
    This implies $(k+1)\InNorms{w_{k+1} - w_0}^2 \le k \InNorms{w_k - w_0}^2 + 4\InNorms{w_0 - w^*}^2$. By induction, it is easy to see that $\InNorms{w_{k} - w_0}^2 \le 4\InNorms{w_0 - w^*}^2$ for all $k \ge 0$.
\end{proof}

\begin{lemma}[Bounded Iterates of \ref{composite-EAG}]
\label{lemma:composite-EAG bounded iterates}
    The iterates $\{z_k\}_{k \ge 0}$ of \ref{composite-EAG} satisfies for all $k \ge 0$
    \begin{align*}
        \InNorms{z_{k}- z_0}^2 &\le 16040 H^2.
    \end{align*}
\end{lemma}
\begin{proof}
    For $k = 1$, $\InNorms{z_1 - z_0}^2 \le H^2$ is clear.  By definitions of \ref{composite-EAG} and $c_{k+1}$, we have
    \begin{align*}
        \InNorms{z_{k+1} - z_0}^2 &= \InNorms{\frac{k}{k+1}(z_k - z_0) - \eta( F(z_{k+\half}) + d_{k+1})}^2 \\
        &\le \InParentheses{1 + \frac{1}{k}}\frac{k^2}{(k+1)^2} \InNorms{z_k - z_0}^2 +  (1 + k) \eta^2 \InNorms{( F(z_{k+\half}) + d_{k+1})}^2 \tag{Young's inequality} \\
        &\le \frac{k}{k+1} \InNorms{z_k - z_0}^2 + 2(1+k)\eta^2 \InParentheses{ \InNorms{F(z_{k+\half}) - F(z_{k+1})}^2 + \InNorms{F(z_{k+1}) + d_{k+1}}^2 } \\
        &\le \frac{k}{k+1} \InNorms{z_k - z_0}^2 + \frac{1}{k+1}16040 H^2.\tag{\Cref{corollary:composite-EAG bounded summation} and \Cref{thm: composite-EAG last-iterate rate}}
    \end{align*}
    By induction, we get $\InNorms{z_{k+1} - z_0}^2 \le 16040 H^2$.
\end{proof}
Combining \Cref{lemma:OHM bounded iterates} and \Cref{lemma:composite-EAG bounded iterates} directly implies that the iterates of \ref{OHM} and \ref{composite-EAG} are at most constant away from each other.
\begin{corollary}\label{coro:composite-EAG OHM bounded}
    Let $w_0 = z_0$. The iterates $\{z_k\}_{k \ge 0}$ of \ref{composite-EAG} and the iterates $\{w_k\}_{k \ge 0}$ of \ref{OHM} satisfies for all $k \ge 0$,
    \begin{align*}
        \InNorms{w_{k} - z_{k}} \le \sqrt{(32080 + 8) H^2} \le 200H.
    \end{align*}
\end{corollary}

\section{Missing Proofs in \Cref{sec:composite-FEG}}
\label{app:composite FEG}

\subsection{Proof of \Cref{prop:U1 bound}}
\begin{proof}
Note that $z_{\half} = z_0$ and $z_1 = J_{\eta A}\InBrackets{z_0 - \eta F(z_0)}$. Thus we have $\eta c_1 = z_0 - \eta F(z_0) - z_1$. By definition of $U_1$, we have 
\begin{align*}
    U_1 &= \frac{1}{2} \InNorms{\eta F(z_1) + \eta c_1}^2 + \InAngles{\eta F(z_1) + \eta c_1, z_1 - z_0} \\
    & = \frac{1}{2} \InNorms{\eta F(z_1) - \eta F(z_0) + z_0 - z_1}^2 + \InAngles{\eta F(z_1) - \eta F(z_0), z_1 - z_0} - \InNorms{z_1 - z_0}^2 \\
    &= \frac{\eta^2}{2} \InNorms{F(z_1) - F(z_0)}^2 - \frac{1}{2} \InNorms{z_1 - z_0}^2 \\
    &\le \frac{\eta^2 L^2 - 1}{2} \InNorms{z_1 - z_0}^2.
\end{align*}
where we use $\eta c_1 = z_0 - \eta F(z_0) - z_1$ in the second equality, and the $L$-Lipshcitzness of $F$ in fourth equality. If $\eta \le \frac{1}{L}$, then $U_1 \le 0$.
\end{proof}

\subsection{Proof of \Cref{lem:composite-FEG monotone potential}}

\begin{proof}
Fix any $k \ge 1$. We first present several inequalities. Since $F$ is $L$-Lipschitz, we have
\begin{align}\label{eq:new LHS 1}
&\InParentheses{-\frac{(k+1)^2}{2}}\cdot \left( \eta^2 L^2\cdot\InNorms{z_{k+\half}-z_{k+1}}^2 - \InNorms{\eta F(z_{k+\half})-\eta F(z_{k+1})}^2\right)  \le 0. 
\end{align}

Additionally, since $F + A$ is $\rho$-comonotone, $c_k \in A(z_k)$, and $c_{k+1} \in A(z_{k+1})$, we have
\begin{align}\label{eq:new LHS 2}
    \InParentheses{-k(k+1)}\cdot &\biggl ( \InAngles{ \eta F(z_{k+1}) + \eta c_{k+1} - \eta F(z_k) - \eta c_k , z_{k+1} - z_k } \notag  \\
    & \quad\quad\quad- \frac{\rho}{\eta} \InNorms{\eta F(z_{k+1}) + \eta c_{k+1} - \eta F(z_k) - \eta c_k }^2 \biggr )\le 0.
\end{align}

\notshow{
By definition, $ c_k \in A(z_k)$ and $ c_{k+1} \in A(z_{k+1})$. Since $A$ is maximaly monotone, we have
\begin{align}
    (-(k+1)(k+2))\cdot \InAngles{ \eta c_{k+1} - \eta c_k , z_{k+1} - z_k)} \le 0. \label{eq:EAG-G low deg LHS 4}
\end{align}
}

The following identity holds due to Identity \eqref{identity AS} in Proposition~\ref{prop:EAG identity}: we treat $x_0$ as $z_0$; $x_t$ as $z_{k+\frac{t-1}{2}}$ for $t \in \{1,2,3\}$; $y_t$ as $\eta F(z_{k+\frac{t-1}{2}})$ for $t \in \{1,2,3\}$; $u_1$ as $\eta c_k$, and $u_3$ as $\eta c_{k+1}$; $p$ as $\eta^2 L^2$, $q$ as $k$, and $r$ as $\frac{\rho}{\eta}$. Note that by the update rule of \ref{composite-FEG}, we have $\eta c_k =  \frac{z_k +\frac{1}{k+1} (z_0 - z_k) - \frac{k}{k+1}(1 + 2\frac{\rho}{\eta}) \cdot \eta F(z_k) - z_{k+\half}}{\frac{k}{k+1}(1 + 2\frac{\rho}{\eta})}$, and by definition, we have $\eta c_{k+1} = z_k  + \frac{1}{k+1}(z_0 - z_k) -  \eta F(z_{k+\half}) - \frac{2k\frac{\rho}{\eta}}{k+1}\cdot (\eta F(z_k) + \eta c_k)- z_{k+1}$.
\begin{align*}
     & U_k - U_{k+1} + \LHSI~\eqref{eq:new LHS 1} + \LHSI~\eqref{eq:new LHS 2} \notag \\
     =& \frac{(1-\eta^2 L^2)(k+1)^2}{2} \cdot \InNorms{z_{k+1} - z_{k+\half}}^2.
\end{align*}
This completes the proof.
\end{proof}

The following corollary will be used in the proof of \Cref{thm: composite-FEG merging path}.
\begin{corollary}
\label{corollary:composite-FEG bounded summation}
    In the same setup as \Cref{thm: composite-FEG last-iterate rate} but we assume $\rho = 0$, then we have
    \begin{align*}
        \sum_{k=1}^\infty (k+1)^2 \InNorms{z_{k+\half} - z_{k+1}}^2 \le \frac{2 H_0^2}{1 - \eta^2 L^2}.
    \end{align*}
\end{corollary}
\begin{proof}
    By \Cref{lem:composite-FEG monotone potential}, we get $\frac{1-\eta^2 L^2}{2} (k+1)^2 \InNorms{z_{k+\half} - z_{k+1}}^2 \le U_k - U_{k+1}$ holds for any $k \ge 1$. Then the claim follows by telescoping the above inequality and notice that $U_1 - U_{k+1} \le H_0^2$.
\end{proof}

\subsection{Proof of \Cref{lemma:composite-FEG lower bound V_k}}
\begin{proof}
    Fix any $T \ge 1$. According to Lemma~\ref{lem:composite-FEG monotone potential}, we have $U_T \le U_1$. Then by definition of $U_T$, we have
    \begin{align*}
    U_T 
    & = \InParentheses{ \frac{T^2}{2}\InParentheses{1+\frac{2\rho}{\eta}} - \frac{\rho}{\eta} T} \cdot \InNorms{\eta F(z_T) + \eta c_T}^2 + T \cdot \InAngles{\eta F(z_T) + \eta c_T, z_T - z^*} \\& \quad + T \cdot \InAngles{\eta F(z_T) + \eta c_T, z^* - z_0} \\
    & \ge \InParentheses{ \frac{T^2}{2}\InParentheses{1+\frac{2\rho}{\eta}} - \frac{\rho}{\eta} T} \cdot \InNorms{\eta F(z_T) + \eta c_T}^2 + \frac{\rho}{\eta} T \cdot \InNorms{\eta F(z_T) + \eta c_T}^2  \\
    & \quad + T \cdot \InAngles{\eta F(z_T) + \eta c_T, z^* - z_0} \\
    &= \frac{\eta \InParentheses{\eta + 2\rho}T^2}{2} \cdot \InNorms{F(z_T) + c_T}^2 + T \cdot \InAngles{\eta F(z_T) + \eta c_T, z^* - z_0} \\
    & \ge \frac{\eta \InParentheses{\eta + 2\rho}T^2}{2} \cdot \InNorms{F(z_T) + c_T}^2 - \frac{\eta \InParentheses{\eta + 2\rho}T^2}{4} \cdot \InNorms{F(z_T) + c_T}^2 \\
    & \quad - \frac{\eta}{\eta + 2\rho} \InNorms{z_0 - z^*}^2\\ 
    & = \frac{\eta \InParentheses{\eta + 2\rho}T^2}{4} \cdot \InNorms{F(z_T) + c_T}^2 - \frac{\eta}{\eta + 2\rho} \InNorms{z_0 - z^*}^2.
    \end{align*}
In the first inequality, we use the fact that $z^*$ is a solution of the \CMI with the $\rho$-comonotone operator $F+A$.
In the second inequality, we use $\InAngles{a,b} \ge -\frac{\delta}{4} \InNorms{a}^2 - \frac{1}{\delta}\InNorms{b}^2$ for $\delta > 0$.
\end{proof}

\subsection{Proof of \Cref{thm: composite-FEG merging path}}
\begin{proof}
    Using the update rule of \ref{composite-FEG} and \ref{OHM} and recall: $\eta c_{k+1} := z_{k} - \eta F(z_{k+\half}) + \beta_k (z_0 - z_{k}) - z_{k+1}$ and $\eta d_{k+1} = w_{k+\half} - \eta F(w_{k+1}) - w_{k+1}$ we have
    \begin{align*}
        z_{k+1} - w_{k+1} &= \InParentheses{\beta_k z_0 + (1-\beta_k)z_k - \eta F(z_{k+\half}) - \eta c_{k+1}} - \InParentheses{\beta_k w_0 + (1-\beta_k)w_k -\eta F(w_{k+1}) - \eta d_{k+1}}\\
        & = (1-\beta_k) (z_k - w_k) + \eta \InParentheses{F(w_{k+1}) + d_{k+1} - F(z_{k+\half}) - c_{k+1}}, \tag{$w_0 = z_0$}
    \end{align*}
    which implies
    \begin{align*}
        \InNorms{z_{k+1} - w_{k+1}}^2 &= (1-\beta_k)^2 \InNorms{z_k - w_k}^2 +  \eta^2 \InNorms{F(w_{k+1}) + d_{k+1} - F(z_{k+\half}) - c_{k+1}}^2 \\
        & \quad + \underbrace{{2\InAngles{(1-\beta_k)(z_k - w_k), \eta \InParentheses{F(w_{k+1}) + d_{k+1} - F(z_{k+\half}) - c_{k+1}}}}}_{\textbf{I}}.
    \end{align*}
    We focus on term \textbf{I}.  We can verify
    \[
    z_{k+\half} - w_{k+1} = (1-\beta_k)(z_k - w_k) - \eta(1-\beta_k)( F(z_k) + c_{k}) -  \eta (F(w_{k+1}) +d_{k+1}). 
    \]
    Thus term \textbf{I} can be rewritten as 
    \begin{align*}
        \textbf{I} &= 2\InAngles{(1-\beta_k)(z_k - w_k), \eta \InParentheses{F(w_{k+1}) + d_{k+1} - F(z_{k+\half}) - c_{k+1}}} \\
        &=2\InAngles{ z_{k+\half} - w_{k+1} + \eta (1-\beta_k) (F(z_k) + c_{k}) -  \eta(F(w_{k+1})+  d_{k+1}), \eta \InParentheses{F(w_{k+1}) + d_{k+1} - F(z_{k+\half}) - c_{k+1}}} \\
        & = 2\eta^2 \InAngles{(1-\beta_k)(F(z_k) + c_k) -  (F(w_{k+1}) +  d_{k+1}), F(w_{k+1}) + d_{k+1} - F(z_{k+\half}) - c_{k+1}} + \\
        &\quad + \underbrace{2 \eta \InAngles{z_{k+\half} - w_{k+1},  F(w_{k+1}) + d_{k+1} - F(z_{k+\half}) - c_{k+1} } }_{\textbf{$A_k$}}.
    \end{align*}
    Combining the above (we keep term \textbf{$A_k$} for now), we get 
    \begin{align*}
        &\InNorms{z_{k+1} - w_{k+1}}^2\\
        &= (1-\beta_k)^2 \InNorms{z_k - w_k}^2 +  \eta^2 \InNorms{F(w_{k+1}) + d_{k+1} - F(z_{k+\half}) - c_{k+1}}^2 \\
        &\quad + 2\eta^2 \InAngles{ (1-\beta_k)(F(z_k) + c_k) -  (F(w_{k+1}) + d_{k+1}), F(w_{k+1}) + d_{k+1} - F(z_{k+\half}) - c_{k+1}} + \textbf{$A_k$} \\
        &= (1-\beta_k)^2 \InNorms{z_k - w_k}^2 - \eta^2 \InNorms{F(w_{k+1}) + d_{k+1}}^2 + 2\eta^2 (1-\beta_k) \InAngles{F(z_k) + c_k, F(w_{k+1}) + d_{k+1}  } \\
        &\quad - 2\eta^2(1-\beta_k) \InAngles{F(z_k) + c_k, F(z_{k+\half}) + c_{k+1}} + \eta^2 \InNorms{F(z_{k+\half}) + c_{k+1}}^2 + \textbf{$A_k$}. \\
        &\le  (1-\beta_k)^2 \InNorms{z_k - w_k}^2 + \eta^2 \InNorms{(1-\beta_k)(F(z_k) + c_k)}^2 \tag{We use $-a^2 + 2ab \le b^2$} \\
        &\quad - 2\eta^2 (1-\beta_k) \InAngles{F(z_k) + c_k, F(z_{k+\half}) + c_{k+1}} + \eta^2 \InNorms{F(z_{k+\half}) + c_{k+1}}^2 + \textbf{$A_k$} \\
        & = (1-\beta_k)^2 \InNorms{z_k - w_k}^2 + \eta^2 \InNorms{(1-\beta_k)(F(z_k) + c_k) - F(z_{k+\half}) - c_{k+1}}^2 + \textbf{$A_k$} \\
        & = (1-\beta_k)^2 \InNorms{z_k - w_k}^2 +  \InNorms{z_{k+1} - z_{k+\half}}^2 + \textbf{$A_k$},
    \end{align*}
    where in the last equality we use the fact that $z_{k+1} - z_{k+\half} = \eta(1-\beta_k)(F(z_k) + c_k)  - \eta F(z_{k+\half}) - \eta c_{k+1}$ by update rule of \ref{composite-FEG}. Plugging $\beta_k = \frac{1}{k+1}$ and multiplying both sides with $(k+1)^2$ gives
    \[
    (k+1)^2\InNorms{z_{k+1} - w_{k+1}}^2 \le k^2\InNorms{z_k - w_k}^2 + (k+1)^2 \InNorms{z_{k+1} - z_{k+\half}}^2 + (k+1)^2\textbf{$A_k$}.
    \]
    Telescoping the above gives 
    \begin{align*}
        &(k+1)^2\InNorms{z_{k+1} - w_{k+1}}^2 \le \InNorms{z_1 - w_1}^2 + \sum_{t=1}^k (t+1)^2 \InNorms{z_{t+1} - z_{t+\half}}^2 + \sum_{t=1}^k (t+1)^2 \textbf{$A_t$}. \\ 
        &\Rightarrow \InNorms{z_{k+1} - w_{k+1}}^2 \le \frac{\InNorms{z_1 - w_1}^2}{(k+1)^2} + \frac{1}{(k+1)^2}\sum_{t=1}^k (t+1)^2 \InNorms{z_{t+1} - z_{t+\half}}^2 + \frac{1}{(k+1)^2}\sum_{t=1}^k (t+1)^2 \textbf{$A_t$}
    \end{align*}
    It remains to bound $\frac{1}{(k+1)^2}\sum_{t=1}^k (t+1)^2 \InNorms{z_{t+1} - z_{t+\half}}^2$ and $\frac{1}{(k+1)^2}\sum_{t=1}^k (t+1)^2 A_t$. By \Cref{corollary:composite-FEG bounded summation}, we have $\sum_{t=1}^\infty(t+1)^2 \InNorms{z_{t+1} - z_{t+\half}}^2 \le \frac{2H_0^2}{1 - \eta^2 L^2}$.

    For the term with $A_t$, we need a more careful analysis.  We decompose $A_k = B_k + C_k + D_k$ as follows.
    \begin{align*}
        A_k &= 2 \eta \InAngles{z_{k+\half} - w_{k+1},  F(w_{k+1}) + d_{k+1} - F(z_{k+\half}) - c_{k+1}} \\
        &= 2 \eta \InAngles{z_{k+\half} -  z_{k+1},  F(w_{k+1}) + d_{k+1} - F(z_{k+\half}) - c_{k+1}} \\
        & \quad + 2\eta \InAngles{z_{k+1} - w_{k+1}, F(w_{k+1}) + d_{k+1} - F(z_{k+\half}) - c_{k+1}} \\
        &= \underbrace{2 \eta \InAngles{z_{k+\half} - z_{k+1},  F(w_{k+1}) + d_{k+1} - F(z_{k+\half}) - c_{k+1}}}_{B_k} + \underbrace{2\eta \InAngles{z_{k+1} - w_{k+1}, F(z_{k+1}) - F(z_{k+\half})}}_{C_k} \\
        &\quad  + \underbrace{2\eta \InAngles{ z_{k+1} - w_{k+1}, F(w_{k+1}) + d_{k+1} - F(z_{k+1}) - c_{k+1} }}_{D_k}.
    \end{align*}

    For $B_k$, we have
    \begin{align*}
        B_k &= 2\eta \InAngles{ z_{k+\half} - z_{k+1}, F(w_{k+1}) + d_{k+1} - F(z_{k+\half}) - c_{k+1} } \\
        &\le 2 \eta \InNorms{z_{k+1} - z_{k+\half}} \InParentheses{\InNorms{F(w_{k+1}) + d_{k+1}} + \InNorms{F(z_{k+1}) - F(z_{k+\half})}+\InNorms{F(z_{k+1}) + c_{k+1}}}\\
        & \le 2\eta L \InNorms{z_{k+1} - z_{k+\half}}^2 + 2 \eta \InNorms{z_{k+1} - z_{k+\half}} \InParentheses{\InNorms{F(w_{k+1}) + d_{k+1}} +\InNorms{F(z_{k+1}) + c_{k+1}}}\\
        &\le  \frac{4 H_0^2}{(1-\eta^2 L^2)(k+1)^2} + 2\eta \sqrt{\frac{2H_0^2}{(1-\eta^2 L^2)(k+1)^2}} \cdot \InParentheses{ \frac{2\InNorms{z_0 - z^*}}{\eta (k+1)} + \frac{2 H_0}{\eta(k+1)} } \\
        &\le \frac{20 H_0^2}{(1-\eta^2 L^2)(k+1)^2},
    \end{align*}
    where in the second last inequality we use $\eta L \le 1$, the last-iterate convergence rate of \ref{OHM} (\Cref{thm:OHM}) and \ref{composite-FEG} (\Cref{thm: composite-FEG last-iterate rate}) as well as the bound on $\InNorms{z_{k+1} - z_{k+\half}}^2$ (\Cref{corollary:composite-FEG bounded summation}); in the last inequality, we use $ H_0 \ge \InNorms{z_0 -z^*}$.

    For $C_k$, we have 
    \begin{align*}
        C_k &= 2\eta \InAngles{z_{k+1} - w_{k+1}, F(z_{k+1}) - F(z_{k+\half})} \\
        &\le 2\eta \InNorms{z_{k+1} - w_{k+1}} \InNorms{ F(z_{k+1}) -  F(z_{k+\half})} \\
        &\le 2\InNorms{z_{k+1}- w_{k+1}}\InNorms{z_{k+1} - z_{k+\half}}\tag{$F$ is $L$-Lipschitz and $\eta L\le 1$}\\
        & \le \frac{12H_0}{\sqrt{1-\eta^2 L^2}} \InNorms{z_{k+1} - z_{k+\half}},
    \end{align*}
    where in the last inequality we use $\InNorms{z_{k+1}- w_{k+1}} \le \frac{6H_0}{\sqrt{1-\eta^2 L^2}}$ is bounded by \Cref{coro:composite-FEG OHM bounded}. 
    
    For $D_k$, since $F+ N_\Z$ is $\rho$-comonotone, we have
    \begin{align*}
        D_k &= 2 \eta \InAngles{z_{k+1} - w_{k+1},  F(w_{k+1}) + d_{k+1} - F(z_{k+1}) - c_{k+1}} \\
        &\le -2\rho\eta \InNorms{F(w_{k+1}) + d_{k+1} - F(z_{k+1}) - c_{k+1}}^2 \\
        &\le -4\rho\eta \InNorms{F(w_{k+1}) + d_{k+1}}^2-4\rho\eta \InNorms{F(z_{k+1}) + c_{k+1}}^2\\
        &\le  -\frac{4\rho}{\eta} \cdot \frac{4\InNorms{z_0 - z^*}^2}{(k+1)^2} -\frac{4\rho}{\eta} \cdot \frac{4 H_0^2}{(1-\eta^2 L^2)(k+1)^2} \tag{convergence rate of \ref{OHM} and \Cref{thm: composite-FEG last-iterate rate}} \\
        &\le \frac{8 H_0^2}{(1-\eta^2 L^2)(k+1)^2}. \tag{$\frac{\rho}{\eta} \ge - \frac{1}{4}$}
    \end{align*} 

    Combining the above bounds for $B_k, C_k, D_k$, we get
    \begin{align*}
        (k+1)^2 A_k \le \frac{28 H_0^2}{1-\eta^2 L^2} + \frac{12H_0}{\sqrt{1-\eta^2 L^2}} (k+1)^2\InNorms{z_{k+1} - z_{k+\half}}.
    \end{align*}
    Since $\sum_{t=1}^\infty (t+1)^2 \InNorms{z_{t+1} - z_{t+\half}}^2 \le \frac{2H_0^2}{1-\eta^2 L^2}$, using Cauchy-Schwartz, we get
    \begin{align*}
         \sum_{t=1}^k (t+1)^2 \InNorms{z_{t+1} - z_{t+\half}} &\le \sqrt{ \InParentheses{\sum_{t=1}^k (t+1)^2 \InNorms{z_{t+1} - z_{t+\half}}^2} \cdot \InParentheses{\sum_{t=1}^k (t+1)^2}} \\
         &\le \frac{2 H_0}{\sqrt{1-\eta^2 L^2}} (k+1)^{3/2}.
    \end{align*} 
    Thus $\frac{12H_0}{\sqrt{1-\eta^2 L^2}}\sum_{t=1}^k (t+1)^2 \InNorms{z_{t+1} - z_{t+\half}} \le \frac{24 H_0^2}{(1-\eta^2 L^2)}(k+1)^{3/2}$.
    
    Hence we get 
    \begin{align*}
        \frac{1}{(k+1)^2} \sum_{t=1}^k (t+1)^2 A_t = \frac{28 H_0^2}{(1-\eta^2 L^2)(k+1)^2} + \frac{24 H_0^2}{(1-\eta^2 L^2)(k+1)^{\half}}.
    \end{align*}
    Combining all the above, we concludes that for all $k$, 
    \begin{align*}
        \InNorms{z_{k+1} - w_{k+1}}^2 &\le \frac{\InNorms{z_1 - w_1}^2}{(k+1)^2} + \frac{1}{(k+1)^2}\sum_{t=1}^k (t+1)^2 \InNorms{z_{t+1} - z_{t+\half}}^2 + \frac{1}{(k+1)^2}\sum_{t=1}^k (t+1)^2 \textbf{$A_t$} \\
        &\le \frac{H_0^2}{1-\eta^2 L^2} \cdot\InParentheses{\frac{24}{(k+1)^{\half}} + \frac{36}{(k+1)^2}}.
    \end{align*}
    This completes the proof.
\end{proof}

\subsubsection{Bounded Iterates of \ref{composite-FEG}}
In this section, we prove auxiliary results needed in the proof of \Cref{thm: composite-FEG merging path}. 
We show that the iterates of \ref{OHM} and \ref{composite-FEG} all have bounded distance away from the initial point $z_0$. 
\begin{lemma}[Bounded Iterates of \ref{composite-FEG}]
\label{lemma:composite-FEG bounded iterates}
    Let $\rho = 0$. The iterates $\{z_k\}_{k \ge 0}$ of \ref{composite-FEG} satisfies for all $k \ge 0$
    \begin{align*}
        \InNorms{z_{k}- z_0}^2 &\le \frac{12H_0^2}{1-\eta^2 L^2}.
    \end{align*}
\end{lemma}
\begin{proof}
    For $k = 1$, $\InNorms{z_1 - z_0}^2 \le H_0^2$ is clear.  By definitions of \ref{composite-FEG} and $c_{k+1}$, we have
    \begin{align*}
        \InNorms{z_{k+1} - z_0}^2 &= \InNorms{\frac{k}{k+1}(z_k - z_0) - \eta( F(z_{k+\half}) + d_{k+1})}^2 \\
        &\le \InParentheses{1 + \frac{1}{k}}\frac{k^2}{(k+1)^2} \InNorms{z_k - z_0}^2 +  (1 + k) \eta^2 \InNorms{( F(z_{k+\half}) + d_{k+1})}^2 \tag{Young's inequality} \\
        &\le \frac{k}{k+1} \InNorms{z_k - z_0}^2 + 2(1+k)\eta^2 \InParentheses{ \InNorms{F(z_{k+\half}) - F(z_{k+1})}^2 + \InNorms{F(z_{k+1}) + d_{k+1}}^2 } \\
        &\le \frac{k}{k+1} \InNorms{z_k - z_0}^2 + \frac{1}{k+1} \cdot \frac{12H_0^2}{1-\eta^2 L^2}.\tag{\Cref{corollary:composite-FEG bounded summation} and \Cref{thm: composite-FEG last-iterate rate}}
    \end{align*}
    By induction, we get $\InNorms{z_{k+1} - z_0}^2 \le \frac{12H_0^2}{1-\eta^2 L^2}$.
\end{proof}
Combining \Cref{lemma:composite-FEG bounded iterates} and \Cref{lemma:OHM bounded iterates} gives the following cororllary.
\begin{corollary}\label{coro:composite-FEG OHM bounded}
    Let $w_0 = z_0$. The iterates $\{z_k\}_{k \ge 0}$ of \ref{composite-FEG} and the iterates $\{w_k\}_{k \ge 0}$ of \ref{OHM} satisfies for all $k \ge 0$,
    \begin{align*}
        \InNorms{w_{k} - z_{k}} \le \frac{6H_0}{\sqrt{1-\eta^2 L^2}}.
    \end{align*}
\end{corollary}

\section{proj-EAG for comonotone inclusion with point convergence}\label{sec:proj-EAG}
In this section, we study constrained variational inequality problem, a special case of \CMI where $A = \partial \indSet_\Z = N_\Z$ is the normal cone operator of a closed convex set $\Z \subseteq\R^n$. This problem captures constrained non-convex-non-concave min-max optimization. We extend the extra anchored gradient (EAG) algorithm in the unconstrained case to this setting and propose the following projected extra anchored gradient (\ref{proj-EAG}) algorithm.
\begin{equation}\label{proj-EAG}\tag{proj-EAG}
    \begin{aligned}
        z_{k+\half} &= \Pi_\Z[\beta_k z_0 + (1- \beta_k) z_k - \eta F(z_k)] \\
        z_{k+1} &= \Pi_\Z[\beta_k z_0 + (1- \beta_k) z_k - \eta F(z_{k+\half})]
    \end{aligned} 
\end{equation}
where $\beta_k = \frac{1}{k+1}$. Note that all the iterates $\{z_k, z_{k+\half}\}_{k \ge 0}$ lies in the feasible set $\Z \subseteq \R^n$. As a consequence, we only require $L$-Lipschitzness and $\rho$-comonotonicity hold on the feasible set $\Z$, but not the whole $\R^n$ space. Specifically, we assume $F: \Z \rightarrow \Z$ is $L$-Lipschitz and $F + N_\Z$ is $\rho$-comonotone  with $\rho \ge -\frac{1}{20L}$.

The two main results of the section are:
\begin{itemize}
    \item[1.] \ref{proj-EAG} has $O(\frac{1}{T})$ last-iterate convergence rate with respect to the tangent residual. 
    \item[2.] \ref{proj-EAG} has point convergence
\end{itemize}

\subsection{Last-Iterate Convergence Rate}
We use the following potential function for \[V_k := \frac{k(k+1)}{2}\cdot \InNorms{\eta F(z_k) + \eta c_k}^2 + k\cdot \InAngles{\eta F(z_k) + \eta c_k, z_k - z_0},\qquad k\ge 1,\]
where $c_k := \frac{\beta_{k-1} z_0 + (1-\beta_{k-1})z_{k-1} - \eta F(z_{k-\half}) - z_k}{\eta}$ for $k \ge 1$. Introducing $c_{k}$ gives the identity $z_{k+1} = \beta_k z_0 + (1- \beta_k) z_k - \eta F(z_{k+\half}) - \eta c_{k+1}$ for all $k \ge 0$. Recall that by definition of projection on $\Z$, we have $c_k \in N_\Z(z_k)$. 

\begin{lemma}[Upper bound of $V_1$]\label{lemma:proj-EAG V_1 bound}
    In the same setup as \Cref{thm:proj-EAG monotone potential at constrained}, we have $V_1 \le  16\InNorms{z_0 - z_\half}^2 \le 16 \eta^2 r^{tan}_{F, N_\Z}(z_0)^2$ 
\end{lemma}
\begin{proof}
    By nonexpansiveness of $\Pi_\Z$, $L$-Lipschitzness of $F$, and $\eta L \le 1$, we have $\InNorms{z_1 - z_\half}^2 \le \InNorms{z_\half - z_0}^2$. This also implies $\InNorms{z_1 - z_0}^2 \le 2\InNorms{z_1 - z_\half}^2 + 2 \InNorms{z_\half - z_0}^2 \le 4 \InNorms{z_\half - z_0}^2$. Then we get 
    \begin{align*}
        \InNorms{\eta F(z_1) + \eta c_1}^2 & = \InNorms{z_0 - z_1 + \eta F(z_1) - \eta F(z_\half)}^2 \\
        &\le 2\InNorms{z_0 - z_1}^2 + 2 \InNorms{z_\half - z_1}^2 \\
        &\le 10\InNorms{z_0 - z_\half}^2.
    \end{align*}
    Now by definition of $V_1$,
    \begin{align*}
        V_1 &= \InNorms{\eta (F(z_1) + c_1)}^2 + \eta \InAngles{F(z_1) + c_1, z_1 - z_0} \\
        &\le (10 +2\sqrt{10})\InNorms{z_0 - z_\half}^2 \le 16\InNorms{z_0 - z_\half}^2.
    \end{align*}
    Let $c_0 \in N_\Z(z_0)$. Using nonexpansiveness of $J_{\eta A}$, we have $\InNorms{z_\half - z_0} \le \InNorms{\eta F(z_0) + \eta c_0}$. Thus $\InNorms{z_\half - z_0} \le \eta r^{tan}_{F, N_\Z}(z_0)$.
\end{proof}

\begin{theorem}[Approximate monotonicity of the potential]\label{thm:proj-EAG monotone potential at constrained}
Let $\Z \subseteq \R^n$ is closed convex set and suppose $F:\Z \rightarrow \R^n$ is $L$-Lispchitz and $F + N_\Z$ is a $\rho$-comonotone operator with $0\ge \rho \ge  -\frac{1}{20L}$. Let $z_0 \in \Z$ be any starting point and $\{z_k, z_{k+\half}\}_{k\ge 0}$ be the iterates of \ref{proj-EAG} with step size $\eta = \frac{0.31}{L}$ that satisfies \Cref{fact:step size}. Then for any $k \ge 1$, \[
V_{k+1} \le V_k + \frac{1}{2} \InNorms{\eta F(z_{k+1}) + \eta c_{k+1}}^2 - \frac{9k(k+1)}{4000}\InNorms{z_{k+\half} - z_{k+1}}^2.
\]
\end{theorem}
\begin{proof}
We first present several inequalities. Let $p = \frac{1}{3}$ and $c = -\frac{4p\rho}{\eta} \ge 0$. Since $F$ is $L$-Lipschitz,  we have
\begin{align*}
    \eta^2 L^2 \cdot \InNorms{z_{k+\half} - z_{k+1}}^2 - \InNorms{\eta F(z_{k+\half}) - \eta F(z_{k+1})}^2 \ge 0.
\end{align*}
Multiplying the both sides of the above inequality by $(1+c)$ and rearranging terms, we get
\begin{align}
&p \cdot \InNorms{z_{k+\half} - z_{k+1}}^2 - \InNorms{\eta F(z_{k+\half}) - \eta F(z_{k+1})}^2 \nonumber \\
& + \InParentheses{(1+c)\eta^2 L^2 - p} \cdot \InNorms{z_{k+\half}}^2 - c \cdot \InNorms{\eta F(z_{k+\half}) - \eta F(z_{k+1})}^2 \ge 0. \label{eq:proj-EAG low deg LHS 2}.
\end{align}
Since $F + N_\Z$ satisfies negative comonotonicity and $0 \in N_\Z(z)$ for any $z \in \Z$, we have
\begin{align}
   & (-k(k+1))\cdot \InParentheses{ \InAngles{ \eta F(z_{k+1}) - \eta F(z_k) , z_{k+1} - z_k)} - \frac{\rho}{\eta} \cdot\InNorms{\eta F(z_{k+1}) - \eta F(z_k)}^2 }\le 0. \label{eq:proj-EAG low deg LHS 3}
\end{align}

Since $z_{k+\half}=\Pi_{\Z}[z_k-\eta F(z_k) + \frac{1}{k+1}(z_0 - z_k)]$,
we can infer that $z_k-\eta F(z_k) +\frac{1}{k+1}(z_0 - z_k) - z_{k+\half}\in N_\Z(z_{k+\half})$. Moreover, by definition of $c_k$ and $c_{k+1}$, we know $c_k \in N_\Z(z_k)$ and $c_{k+1} \in N_\Z(z_{k+1})$. Therefore, we have
\begin{align}
   & (-k(k+1))\cdot \InAngles{ z_k - \eta F(z_k) - z_{k+\half} +\frac{1}{k+1}(z_0 - z_k),z_{k+\half} - z_{k+1} } \le 0, \label{eq:proj-EAG low deg LHS 4}\\
   & (-k(k+1))\cdot \InAngles{ \eta c_{k+1} ,z_{k+1} - z_k } \le 0, \label{eq:proj-EAG low deg LHS 5}\\
   & (-k(k+1))\cdot \InAngles{ \eta c_k,z_{k} - z_{k+\half} } \le 0. \label{eq:proj-EAG low deg LHS 6}
\end{align}

Moreover, by definition, we have $\eta c_{k+1} = z_k - \eta F(z_{k+\half}) + \frac{1}{k+1}(z_0 - z_k) - z_{k+1}$). Then have the following identity holds. The correctness of the identity follows by \Cref{eq:identity proj-EAG} in \Cref{prop:EAG identity}:  we treat $x_0$ as $z_0$; $x_t$ as $z_{k+\frac{t-1}{2}}$ for $t \in \{1,2,3\}$; $y_t$ as $\eta F(z_{k+\frac{t-1}{2}}$ for $t \in \{1,2,3\}$; $u_1$ as $\eta c_k$ and $u_3$ as $\eta c_{k+1}$; $q$ as $k$. Note that Term \eqref{eq:proj-EAG low deg RHS 1}, \eqref{eq:proj-EAG low deg RHS 2}, and \eqref{eq:proj-EAG low deg RHS 3} comes from the right hand side of \Cref{eq:identity proj-EAG}, while Term \eqref{eq:proj-EAG low deg RHS 4} directly comes from Inequality \eqref{eq:proj-EAG low deg LHS 2} and Term \eqref{eq:proj-EAG low deg RHS 5} directly comes from Inequality \eqref{eq:proj-EAG low deg LHS 3}.
\begin{align}
     & V_k - V_{k+1} - \frac{k(k+1)}{2p} \cdot \LHSI~\eqref{eq:proj-EAG low deg LHS 2} + \LHSI~\eqref{eq:proj-EAG low deg LHS 3} + \LHSI~\eqref{eq:proj-EAG low deg LHS 4}  \notag \\
     \quad & +\LHSI~\eqref{eq:proj-EAG low deg LHS 5} + \LHSI~\eqref{eq:proj-EAG low deg LHS 6}   \notag\\
     =& \frac{k(k+1)}{2}\cdot \InNorms{z_{k+\half} - z_k + \eta F(z_k) + \eta c_k + \frac{1}{k+1}(z_k - z_0)}^2 \label{eq:proj-EAG low deg RHS 1} \\
     \quad &+ \frac{(1-p)k(k+1)}{2p}\cdot \InNorms{\eta F(z_{k+\half}) - \eta F(z_{k+1})}^2 \label{eq:proj-EAG low deg RHS 2}\\
     \quad & + (k+1) \cdot \InAngles{\eta F(z_{k+\half}) - \eta F(z_{k+1}), \eta F(z_{k+1}) + \eta c_{k+1}}\label{eq:proj-EAG low deg RHS 3} \\
     \quad &+ \frac{k(k+1)}{2p} \cdot \InParentheses{(p - (1+c)\eta^2 L^2) \InNorms{z_{k+\half} - z_{k+1}}^2 + c\cdot \InNorms{\eta F(z_{k+\half})-\eta F(z_{k+1})}^2} \label{eq:proj-EAG low deg RHS 4} \\
     \quad &+  k(k+1)\frac{\rho}{\eta}\InNorms{\eta F(z_{k+1}) - \eta F(z_k)}^2.\label{eq:proj-EAG low deg RHS 5} 
\end{align}
Since $\InNorms{a}^2 + \InAngles{a,b} = \InNorms{a+\frac{b}{2}}^2 -\frac{\InNorms{b}^2}{4}$, we have
\begin{align*}
    &\text{Expression} (\ref{eq:proj-EAG low deg RHS 2}) + \text{Expression} (\ref{eq:proj-EAG low deg RHS 3}) \\
    =& \InNorms{\sqrt{\frac{(1-p)k(k+1)}{2p}}\cdot \InParentheses{\eta F(z_{k+\half}) - \eta F(z_{k+1})} + \sqrt{\frac{p(k+1)}{2(1-p) k}} \cdot \InParentheses{\eta F(z_{k+1}) + \eta c_{k+1}}}^2 \notag \\
     & - \frac{k+1}{2k} \cdot \frac{p}{1-p} \InNorms{\eta F(z_{k+1}) + \eta c_{k+1}}^2\\ 
     & \ge -\frac{p}{1-p} \InNorms{\eta F(z_{k+1}) + \eta c_{k+1}}^2. \tag{$k \ge 1$}\\
     & = - \frac{1}{2} \InNorms{\eta F(z_{k+1}) + \eta c_{k+1}}^2.
\end{align*}

Now it remains to give a non-negative lower bound of Expression \eqref{eq:proj-EAG low deg RHS 4} + Expression \eqref{eq:proj-EAG low deg RHS 5}. Recall that $p = \frac{1}{3}$ and $c = -\frac{4\rho p}{\eta}$, thus 
\begin{align*}
   &(\frac{2}{k(k+1)}) \cdot \InParentheses{ \text{Expression } \eqref{eq:proj-EAG low deg RHS 4} + \text{Expression } \eqref{eq:proj-EAG low deg RHS 5}} \\
   &= \InParentheses{1 - \InParentheses{3 - \frac{4\rho}{\eta}}\cdot \eta^2 L^2}\cdot \InNorms{z_{k+\half} - z_{k+1}}^2 - \frac{4\rho}{\eta} \cdot \InParentheses{ \cdot \InNorms{\eta F(z_{k+\half})-\eta F(z_{k+1})}^2 - \frac{1}{2}\cdot \InNorms{\eta F(z_{k+1}) - \eta F(z_k)}^2  } \\
   & \ge \InParentheses{1 - \InParentheses{3 - \frac{4\rho}{\eta}}\cdot \eta^2 L^2}\cdot \InNorms{z_{k+\half} - z_{k+1}}^2 + \frac{4\rho}{\eta}\cdot \InNorms{\eta F(z_{k+\half}) - \eta F(z_k)}^2 
   \tag{$  \InNorms{A}^2 - \frac{1}{2}\InNorms{B}^2 \ge -\InNorms{A-B}^2$}\\
   & \ge \InParentheses{1 + \frac{4\rho}{\eta} - \InParentheses{3 - \frac{4\rho}{\eta}}\cdot \eta^2 L^2} \InNorms{z_{k+\half} - z_{k+1}}^2,
\end{align*}
where the last inequality holds because $\InNorms{z_{k+\half} - z_{k+1}}^2 \le \InNorms{\eta F(z_{k+\half}) - \eta F(z_k)}^2$, which is due to update rule of \ref{proj-EAG} and the fact that  $\Pi_Z$ is non-expansive. 
Hence, we have $V_{k+1} \le V_k + \frac{1}{2} \InNorms{\eta F(z_{k+1}) + \eta c_{k+1}}^2$ -  $\InParentheses{1 + \frac{4\rho}{\eta} - \InParentheses{3 - \frac{4\rho}{\eta}}\cdot \eta^2 L^2} \frac{k(k+1)}{2} \InNorms{z_{k+\half} - z_{k+1}}^2$. Plugging $\eta = \frac{0.31}{L}$, we have  $\InParentheses{1 + \frac{4\rho}{\eta} - \InParentheses{3 - \frac{4\rho}{\eta}}\cdot \eta^2 L^2} \ge \frac{9}{2000}$ ( by \Cref{fact:step size}). This completes the proof.
\end{proof}

We also show that $V_k$ is of order $\Omega(k^2 \cdot r^{tan}(z_k))$. 
\begin{lemma}\label{lemma:proj-EAG lower bound V_k}
    In the same setup as \Cref{thm:proj-EAG monotone potential at constrained}, for any $k \ge 1$, we have
    \[
    \frac{k(k+1)}{4} \InNorms{\eta F(z_k) + \eta c_k}^2 \le V_k + \InNorms{z_0 - z^*}^2
    \]
    In particular, $V_k \ge - \InNorms{z_0 -z^*}^2$.
\end{lemma}
\begin{proof}
    Fix any $k \ge 1$. Since $0 \in F(z^*) + N_\Z(z^*)$, by $\rho$-comonotonicity and \Cref{fact:step size}, we have 
    \begin{equation}
        \InAngles{\eta F(z_k) + \eta c_k, z_k - z^*} \ge \frac{\rho}{\eta} \InNorms{\eta F(z_k) + \eta c_k}^2 > -\frac{1}{4} \InNorms{\eta F(z_k) + \eta c_k}^2. \label{eq:comonotone-1}
    \end{equation}
    By definition of $V_k$, we have
    \begin{align*}
        V_k &= \frac{k(k+1)}{2}\cdot \InNorms{\eta F(z_k) + \eta c_k}^2 + k\cdot \InAngles{\eta F(z_k) + \eta c_k, z_k - z_0} \\
        &=  \frac{k(k+1)}{2}\cdot \InNorms{\eta F(z_k) + \eta c_k}^2 + k\cdot \InAngles{\eta F(z_k) + \eta c_k, z_k - z^*} + k\InAngles{\eta F(z_k) + \eta c_k, z^* - z_0} \\
        &\ge \frac{k(k+1)}{2}\cdot \InNorms{\eta F(z_k) + \eta c_k}^2 -\frac{1}{4}\InNorms{\eta F(z_k) + \eta c_k}^2 + k\InAngles{\eta F(z_k) + \eta c_k, z^* - z_0}  \tag{By \Cref{eq:comonotone-1}} \\
        &\ge \frac{k(k+\frac{1}{2})}{2}\cdot \InNorms{\eta F(z_k) + \eta c_k}^2 - \frac{k^2}{4} \InNorms{\eta F(z_k) + \eta c_k}^2 - \InNorms{z_0 - z^*}^2 \\
        &= \frac{k(k+1)}{4}\cdot \InNorms{\eta F(z_k) + \eta c_k}^2 - \InNorms{z_0 - z^*}^2,
    \end{align*}
    where in the second last inequality, we apply $\InAngles{a, b} \ge -\frac{\alpha}{4}\InNorms{a^2} - \frac{1}{\alpha}\InNorms{b^2}$ with $a = \eta F(z_k) + \eta c_k$, $b = z^* -z_0$, and $\alpha = k$. This completes the proof.
\end{proof}

Combinging the approximate monotonicity of the potential function $V_k$ in \Cref{thm:proj-EAG monotone potential at constrained} and the above lower bound of $V_k$, we can prove the $O(\frac{1}{T})$ last-iterate convergence rate easily. 
\begin{theorem}[Last-iterate convergence rate of \eqref{proj-EAG} for comonotone inclusion problem]\label{thm:proj-EAG last rate}
    In the same setup as \Cref{thm:proj-EAG monotone potential at constrained}, we have for any $k \ge 1$,
    \[
    \InNorms{F(z_k) + c_k}^2 \le \frac{20(V_1 + \InNorms{z_0 -z^*}^2)}{\eta ^2 k^2} \le  \frac{20H^2}{\eta ^2 k^2}.
    \]
    where $H^2 = 16\InNorms{z_\half - z_0}^2 + \InNorms{z_0 - z^*}^2 \le 16\eta^2 r^{tan}_{F, N_\Z}(z_0)^2 + \InNorms{z_0 - z^*}^2$. 
\end{theorem}
\begin{proof}
    For $k = 1$, from \Cref{lemma:proj-EAG lower bound V_k}, we directly get that $\InNorms{\eta F(z_1) + \eta c_1}^2 \le 4(V_1 + \InNorms{z_0 - z^*}^2)$.
    
    Now fix any $k \ge 2$. Combining \Cref{thm:proj-EAG monotone potential at constrained} and \Cref{lemma:proj-EAG lower bound V_k}, we have 
    \begin{align*}
        \frac{k(k+1)}{4}\InNorms{\eta F(z_k) + \eta c_k}^2 &\le V_k + \InNorms{z_0 - z^*}^2\\
        &\le V_1 +  \InNorms{z_0 - z^*}^2 + \frac{1}{2} \cdot \sum_{t=2}^k \InNorms{\eta F(z_t) + \eta c_t}^2.
    \end{align*}
    Subtracting $\frac{1}{2}\InNorms{F(z_k) + \eta c_k}^2$ from both sides and noting that $\frac{1}{4}k \ge \frac{1}{2}$ for $k \ge 2$ gives
    \[
    \frac{k^2}{4} \InNorms{\eta F(z_k) + \eta c_k}^2 \le V_1 +  \InNorms{z_0 - z^*}^2 + \frac{1}{2} \cdot \sum_{t=2}^{k-1} \InNorms{\eta F(z_t) + \eta c_t}^2
    \]
    Now we can apply \Cref{prop:sequence analysis} with $C_1 = V_1 + \InNorms{z_0 - z^*}^2$  to conclude that $\InNorms{\eta F(z_k) + \eta c_k}^2 \le \frac{20(V_1 + \InNorms{z_0 - z^*}^2)}{k^2}$. Applying \Cref{lemma:proj-EAG V_1 bound}, we know $V_1 \le 16\InNorms{z_\half - z_0}^2  \le 16\eta^2 r^{tan}_{F, N_\Z}(z_0)^2$ gives the desired result. 
\end{proof}

\begin{corollary}\label{corollary:proj-EAG bounded summation}
    In the same setup as \Cref{thm:proj-EAG monotone potential at constrained}, we have
    \[
     \sum_{k=1}^\infty k(k+1) \InNorms{z_{k+1} - z_{k+\half}}^2 \le 4000 H^2.
    \]
    It also implies $\InNorms{z_{k+1} - z_{k+\half}}^2 \le \frac{8000 H^2}{(k+1)^2}$ for all $k \ge 1$.
\end{corollary}
\begin{proof}
    By \Cref{thm:proj-EAG monotone potential at constrained}, we have
    \[
    \frac{9k(k+1)}{4000}\InNorms{z_{k+1} - z_{k+\half}}^2 \le V_k - V_{k+1} + \frac{1}{2}\InNorms{\eta F(z_{k+1})+\eta c_{k+1}}^2
    \]
    Telescoping the above inequality for $k = 1, 2, \ldots, T$ gives
    \begin{align*}
        \frac{9}{4000} \sum_{k=1}^T k(k+1) \InNorms{z_{k+1} - z_{k+\half}}^2 &\le V_1- V_{T+1} + \frac{1}{2} \sum_{k=1}^T \InNorms{\eta F(z_{k+1})+\eta c_{k+1}}^2 \\
        &\le V_1 + \InNorms{z_0 -z^*}^2 + 10H^2 \cdot \InParentheses{\sum_{k=1}^T \frac{1}{(t+1)^2}} \\
        & \le 9H^2.
    \end{align*}
    where in the second inequality we use (1) $V_{T+1} \ge -\InNorms{z_0 - z^*}^2$ by \Cref{lemma:proj-EAG lower bound V_k} and (2) $\InNorms{\eta F(z_{k+1})+\eta c_{k+1}}^2 \le \frac{20H^2}{(t+1)^2}$ by \Cref{thm:proj-EAG monotone potential at constrained}, in the last inequality we use $\sum_{k=1}^\infty \frac{1}{(t+1)^2} = \frac{\pi^2}{6} -1 \le \frac{3}{4}$. This concludes $\sum_{k=1}^\infty k(k+1) \InNorms{z_{k+1} - z_{k+\half}}^2 \le 4000 H^2$. Since $\frac{1}{2}(k+1)^2 \le k(k+1)$ for all $k \ge 1$, it further implies $\InNorms{z_{k+1} - z_{k+\half}}^2 \le \frac{8000 H^2}{(k+1)^2}$ for all $k \ge 1$.
\end{proof}

\subsection{Point Convergence of \ref{proj-EAG} in the Comonotone Case}

In the following, we show that $\InNorms{z_{k+1} - w_{k+1}} \rightarrow 0$, i.e., the trajectory of \ref{proj-EAG} and \ref{OHM} merges. This would directly prove the point convergence of OHM.

\begin{theorem}
    Let $\Z \subseteq \R^n$ be a closed convex set,  $F: \Z \rightarrow \Z$ be $L$-Lipschitz, and $F+ N_\Z$ be $\rho$-comonotone with $\rho \ge -\frac{1}{20L}$. Let $\beta_k= \frac{1}{k+1}$,  $\eta = \frac{0.31}{L}$, and $w_0 = z_0$. Then the iterates $\{z_k\}_{k\geq 0}$ of \ref{proj-EAG} and the iterates $\{w_k\}_{k\geq 0}$ of \ref{OHM} merges, i.e., 
    \[
    \InNorms{z_{k+1} - w_{k+1}}^2 \le \frac{40000 H^2}{(k+1)^{\half}} + \frac{30000H^2}{(k+1)^2} = O(\frac{H^2}{(k+1)^{\half}})
    \]
    In particular, $\lim_{k \rightarrow \infty} \InNorms{z_{k+1} - w_{k+1}} = 0$.
\end{theorem}
\begin{proof}
    Using the update rule of \ref{proj-EAG} and \ref{OHM} and recall: $\eta c_{k+1} := z_{k} - \eta F(z_{k+\half}) + \beta_k (z_0 - z_{k}) - z_{k+1}$ and $\eta d_{k+1} = w_{k+\half} - \eta F(w_{k+1}) - w_{k+1}$ we have
    \begin{align*}
        z_{k+1} - w_{k+1} &= \InParentheses{\beta_k z_0 + (1-\beta_k)z_k - \eta F(z_{k+\half}) - \eta c_{k+1}} - \InParentheses{\beta_k w_0 + (1-\beta_k)w_k -\eta F(w_{k+1}) - \eta d_{k+1}}\\
        & = (1-\beta_k) (z_k - w_k) + \eta \InParentheses{F(w_{k+1}) + d_{k+1} - F(z_{k+\half}) - c_{k+1}}, \tag{$w_0 = z_0$}
    \end{align*}
    which implies
    \begin{align*}
        \InNorms{z_{k+1} - w_{k+1}}^2 &= (1-\beta_k)^2 \InNorms{z_k - w_k}^2 +  \eta^2 \InNorms{F(w_{k+1}) + d_{k+1} - F(z_{k+\half}) - c_{k+1}}^2 \\
        & \quad + \underbrace{{2\InAngles{(1-\beta_k)(z_k - w_k), \eta \InParentheses{F(w_{k+1}) + d_{k+1} - F(z_{k+\half}) - c_{k+1}}}}}_{\textbf{I}}.
    \end{align*}
    We focus on term \textbf{I}. Denote $c_{k+\half}:= \frac{\beta_k z_0 + (1-\beta_k)z_k - \eta F(z_k) - z_{k+\half}}{\eta} \in N_\Z(z_{k+\half})$. We can verify
    \[
    z_{k+\half} - w_{k+1} = (1-\beta_k)(z_k - w_k) - \eta\InParentheses{ F(z_k) + c_{k+\half} -  F(w_{k+1}) -  d_{k+1}}. 
    \]
    Thus term \textbf{I} can be rewritten as 
    \begin{align*}
        \textbf{I} &= 2\InAngles{(1-\beta_k)(z_k - w_k), \eta \InParentheses{F(w_{k+1}) + d_{k+1} - F(z_{k+\half}) - c_{k+1}}} \\
        &=2\InAngles{ z_{k+\half} - w_{k+1} + \eta\InParentheses{ F(z_k) + c_{k+\half} -  F(w_{k+1}) -  d_{k+1}}, \eta \InParentheses{F(w_{k+1}) + d_{k+1} - F(z_{k+\half}) - c_{k+1}}} \\
        & = 2\eta^2 \InAngles{F(z_k) + c_{k+\half} -  F(w_{k+1}) -  d_{k+1}, F(w_{k+1}) + d_{k+1} - F(z_{k+\half}) - c_{k+1}} + \\
        &\quad + \underbrace{2 \eta \InAngles{z_{k+\half} - w_{k+1},  F(w_{k+1}) + d_{k+1} - F(z_{k+\half}) - c_{k+1} } }_{\textbf{$A_k$}}.
    \end{align*}
    Combining the above (we keep term \textbf{$A_k$} for now), we get 
    \begin{align*}
        &\InNorms{z_{k+1} - w_{k+1}}^2\\
        &\le (1-\beta_k)^2 \InNorms{z_k - w_k}^2 +  \eta^2 \InNorms{F(w_{k+1}) + d_{k+1} - F(z_{k+\half}) - c_{k+1}}^2 \\
        &\quad + 2\eta^2 \InAngles{F(z_k) + c_{k+\half} -  F(w_{k+1}) -  d_{k+1}, F(w_{k+1}) + d_{k+1} - F(z_{k+\half}) - c_{k+1}} + \textbf{$A_k$} \\
        &= (1-\beta_k)^2 \InNorms{z_k - w_k}^2 - \eta^2 \InNorms{F(w_{k+1}) + d_{k+1}}^2 + 2\eta^2 \InAngles{F(z_k) + c_{k+\half}, F(w_{k+1}) + d_{k+1}  } \\
        &\quad - 2\eta^2 \InAngles{F(z_k) + c_{k+\half}, F(z_{k+\half}) + c_{k+1}} + \eta^2 \InNorms{F(z_{k+\half}) + c_{k+1}}^2 + \textbf{$A_k$}. \\
        &\le  (1-\beta_k)^2 \InNorms{z_k - w_k}^2 + \eta^2 \InNorms{F(z_k) + c_{k+\half}}^2 \tag{We use $-a^2 + 2ab \le b^2$} \\
        &\quad - 2\eta^2 \InAngles{F(z_k) + c_{k+\half}, F(z_{k+\half}) + c_{k+1}} + \eta^2 \InNorms{F(z_{k+\half}) + c_{k+1}}^2 + \textbf{$A_k$} \\
        & = (1-\beta_k)^2 \InNorms{z_k - w_k}^2 + \eta^2 \InNorms{F(z_k) + c_{k+\half} - F(z_{k+\half}) - c_{k+1}}^2 + \textbf{$A_k$} \\
        & = (1-\beta_k)^2 \InNorms{z_k - w_k}^2 +  \InNorms{z_{k+1} - z_{k+\half}}^2 + \textbf{$A_k$}
    \end{align*}
    Plugging $\beta_k = \frac{1}{k+1}$ and multiplying both sides with $(k+1)^2$ gives
    \[
    (k+1)^2\InNorms{z_{k+1} - w_{k+1}}^2 \le k^2\InNorms{z_k - w_k}^2 + (k+1)^2 \InNorms{z_{k+1} - z_{k+\half}}^2 + (k+1)^2\textbf{$A_k$}.
    \]
    Telescoping the above gives 
    \begin{align*}
        &(k+1)^2\InNorms{z_{k+1} - w_{k+1}}^2 \le \InNorms{z_1 - w_1}^2 + \sum_{t=1}^k (t+1)^2 \InNorms{z_{t+1} - z_{t+\half}}^2 + \sum_{t=1}^k (t+1)^2 \textbf{$A_t$}. \\ 
        &\Rightarrow \InNorms{z_{k+1} - w_{k+1}}^2 \le \frac{\InNorms{z_1 - w_1}^2}{(k+1)^2} + \frac{1}{(k+1)^2}\sum_{t=1}^k (t+1)^2 \InNorms{z_{t+1} - z_{t+\half}}^2 + \frac{1}{(k+1)^2}\sum_{t=1}^k (t+1)^2 \textbf{$A_t$}
    \end{align*}
    It remains to show that $\frac{1}{(k+1)^2}\sum_{t=1}^k (t+1)^2 \InNorms{z_{t+1} - z_{t+\half}}^2 = o(1)$ and $\frac{1}{(k+1)^2}\sum_{t=1}^k (t+1)^2 \textbf{$A_t$} = o(1)$. 
    
    For the first term, using \Cref{corollary:proj-EAG bounded summation}, we have
    \begin{align*}
        \sum_{t=1}^k (t+1)^2 \InNorms{z_{t+1} - z_{t+\half}}^2 &\le 2 \sum_{t=1}^k t(t+1) \InNorms{z_{t+1} - z_{t+\half}}^2 \\
        &\le 8000 H^2.
    \end{align*}

    We need a more careful analysis of $A_k$ for the second term. We decompose $A_k = B_k + C_k + D_k$ as follows.
    \begin{align*}
        A_k &= 2 \eta \InAngles{z_{k+\half} - w_{k+1},  F(w_{k+1}) + d_{k+1} - F(z_{k+\half}) - c_{k+1}} \\
        &= 2 \eta \InAngles{z_{k+\half} -  z_{k+1},  F(w_{k+1}) + d_{k+1} - F(z_{k+\half}) - c_{k+1}} \\
        &\quad + 2\eta \InAngles{z_{k+1} - w_{k+1}, F(w_{k+1}) + d_{k+1} - F(z_{k+\half}) - c_{k+1}} \\
        &= \underbrace{2 \eta \InAngles{z_{k+\half} - z_{k+1},  F(w_{k+1}) + d_{k+1} - F(z_{k+\half}) - c_{k+1}}}_{B_k} + \underbrace{2\eta \InAngles{z_{k+1} - w_{k+1}, F(z_{k+1}) - F(z_{k+\half})}}_{C_k} \\
        &\quad  + \underbrace{2\eta \InAngles{ z_{k+1} - w_{k+1}, F(w_{k+1}) + d_{k+1} - F(z_{k+1}) - c_{k+1} }}_{D_k}.
    \end{align*}

     For $B_k$, we have
    \begin{align*}
        B_k &= 2\eta \InAngles{ z_{k+\half} - z_{k+1}, F(w_{k+1}) + d_{k+1} - F(z_{k+\half}) - c_{k+1} } \\
        &\le 2 \eta \InNorms{z_{k+1} - z_{k+\half}} \InParentheses{\InNorms{F(w_{k+1}) + d_{k+1}} + \InNorms{F(z_{k+1}) - F(z_{k+\half})}+\InNorms{F(z_{k+1}) + c_{k+1}}}\\
        & \le 2\eta L \InNorms{z_{k+1} - z_{k+\half}}^2 + 2 \eta \InNorms{z_{k+1} - z_{k+\half}} \InParentheses{\InNorms{F(w_{k+1}) + d_{k+1}} +\InNorms{F(z_{k+1}) + c_{k+1}}}\\
        &\le  \frac{16000 H^2}{(k+1)^2} + 2\eta \sqrt{\frac{8000 H^2}{(k+1)^2}} \cdot \InParentheses{ \frac{2\InNorms{z_0 - z^*}}{\eta (k+1)} + \frac{\sqrt{20 H^2}}{\eta(k+1)} } \\
        &\le \frac{17140 H^2}{(k+1)^2}.
    \end{align*}
    where in the second last inequality we use the last-iterate convergence rate of \ref{OHM} and \ref{proj-EAG} (\Cref{thm:proj-EAG last rate}) as well as the bound on $\InNorms{z_{k+1} - z_{k+\half}}^2$ (\Cref{corollary:proj-EAG bounded summation}).

    For $C_k$, we have 
    \begin{align*}
        C_k &= 2\eta \InAngles{z_{k+1} - w_{k+1}, F(z_{k+1}) - F(z_{k+\half})} \\
        &\le 2\eta \InNorms{z_{k+1} - w_{k+1}} \InNorms{ F(z_{k+1}) -  F(z_{k+\half})} \\
        &\le 2\InNorms{z_{k+1}- w_{k+1}}\InNorms{z_{k+1} - z_{k+\half}}\tag{$F$ is $L$-Lipschitz and $\eta L\le 1$}\\
        & \le 400H \InNorms{z_{k+1} - z_{k+\half}},
    \end{align*}
    where in the last inequality we use $\InNorms{z_{k+1}- w_{k+1}} \le 90H$ is bounded by \Cref{coro:proj-EAG OHM bounded}.

    For $D_k$, since $F+ N_\Z$ is $\rho$-comonotone, we have
    \begin{align*}
        D_k &= 2 \eta \InAngles{z_{k+1} - w_{k+1},  F(w_{k+1}) + d_{k+1} - F(z_{k+1}) - c_{k+1}} \\
        &\le -2\rho\eta \InNorms{F(w_{k+1}) + d_{k+1} - F(z_{k+1}) - c_{k+1}}^2 \\
        &\le -4\rho\eta \InNorms{F(w_{k+1}) + d_{k+1}}^2-4\rho\eta \InNorms{F(z_{k+1}) + c_{k+1}}^2\\
        &\le  -\frac{4\rho}{\eta} \cdot \frac{4\InNorms{z_0 - z^*}^2}{(k+1)^2} -\frac{4\rho}{\eta} \cdot \frac{20 H^2}{(k+1)^2} \tag{convergence rate of \ref{OHM} and \Cref{thm:proj-EAG last rate}} \\
        &\le \frac{24 H^2}{(k+1)^2}. \tag{$\frac{\rho}{\eta} \ge - \frac{1}{4}$}
    \end{align*}
    
    Combining the above bounds for $B_k, C_k, D_k$, we get
    \begin{align*}
        (k+1)^2 A_k \le 17424 H^2 + 400H(k+1)^2\InNorms{z_{k+1} - z_{k+\half}}.
    \end{align*}
    Since $\sum_{t=1}^\infty (t+1)^2 \InNorms{z_{t+1} - z_{t+\half}}^2 \le 3200H^2$, using Cauchy-Schwartz, we get
    \begin{align*}
         \sum_{t=1}^k (t+1)^2 \InNorms{z_{t+1} - z_{t+\half}} &\le \sqrt{ \InParentheses{\sum_{t=1}^k (t+1)^2 \InNorms{z_{t+1} - z_{t+\half}}^2} \cdot \InParentheses{\sum_{t=1}^k (t+1)^2}} \\
         &\le 100H (k+1)^{3/2}.
    \end{align*} 
    Thus $\frac{1}{(k+1)^2} \sum_{t=1}^k (t+1)^2 \InNorms{z_{t+1} - z_{t+\half}} \le 100H (k+1)^{-\frac{1}{2}}$.
    
    Hence we get 
    \begin{align*}
        \frac{1}{(k+1)^2} \sum_{t=1}^k (t+1)^2 A_t = \frac{17424 H^2}{(k+1)^2} + \frac{40000 H^2}{(k+1)^{\half}}
    \end{align*}
    Combining all the above, we conclude that for all $k$, 
    \begin{align*}
        \InNorms{z_{k+1} - w_{k+1}}^2 &\le \frac{\InNorms{z_1 - w_1}^2}{(k+1)^2} + \frac{1}{(k+1)^2}\sum_{t=1}^k (t+1)^2 \InNorms{z_{t+1} - z_{t+\half}}^2 + \frac{1}{(k+1)^2}\sum_{t=1}^k (t+1)^2 \textbf{$A_t$} \\
        &\le \frac{40000 H^2}{(k+1)^{\half}} + \frac{30000H^2}{(k+1)^2}.
    \end{align*}
    This completes the proof. 
\end{proof}

\begin{lemma}[Bounded Iterates of \ref{proj-EAG}]
\label{lemma:proj-EAG bounded iterates}
    The iterates $\{z_k\}_{k \ge 0}$ of \ref{proj-EAG} satisfies for all $k \ge 0$
    \begin{align*}
        \InNorms{z_{k}- z_0}^2 &\le 16040 H^2.
    \end{align*}
\end{lemma}
\begin{proof}
    For $k = 1$, the proof of \Cref{lemma:proj-EAG V_1 bound} gives $\InNorms{z_1 - z_0}^2 \le H^2$.  By definitions of \ref{proj-EAG} and $c_{k+1}$, we have
    \begin{align*}
        \InNorms{z_{k+1} - z_0}^2 &= \InNorms{\frac{k}{k+1}(z_k - z_0) - \eta( F(z_{k+\half}) + d_{k+1})}^2 \\
        &\le \InParentheses{1 + \frac{1}{k}}\frac{k^2}{(k+1)^2} \InNorms{z_k - z_0}^2 +  (1 + k) \eta^2 \InNorms{( F(z_{k+\half}) + d_{k+1})}^2 \tag{Young's inequality} \\
        &\le \frac{k}{k+1} \InNorms{z_k - z_0}^2 + 2(1+k)\eta^2 \InParentheses{ \InNorms{F(z_{k+\half}) - F(z_{k+1})}^2 + \InNorms{F(z_{k+1}) + d_{k+1}}^2 } \\
        &\le \frac{k}{k+1} \InNorms{z_k - z_0}^2 + \frac{1}{k+1}16040 H^2.\tag{\Cref{corollary:proj-EAG bounded summation} and \Cref{thm:proj-EAG last rate}}
    \end{align*}
    By induction, we get $\InNorms{z_{k+1} - z_0}^2 \le 16040 H^2$.
\end{proof}
\begin{corollary}\label{coro:proj-EAG OHM bounded}
    Let $w_0 = z_0$. The iterates $\{z_k\}_{k \ge 0}$ of \ref{proj-EAG} and the iterates $\{w_k\}_{k \ge 0}$ of \ref{OHM} satisfies for all $k \ge 0$,
    \begin{align*}
        \InNorms{w_{k} - z_{k}} \le \sqrt{(32080 + 8) H^2} \le 200H.
    \end{align*}
\end{corollary}

\section{Auxiliary Propositions}\label{sec:auxiliary}

\begin{remark}
    The proofs for our algorithms' last-iterate convergence rates are all based on potential function arguments. The core of these arguments is to prove the potential function's (approximate) monotonicity. The three identities below simplify tedious algebraic calculations and are useful to show the potential functions' (approximate) monotonicity. 
\end{remark}
\begin{proposition}\label{prop:EAG identity}
Let $x_0, x_1, x_2, x_3, y_1, y_2, y_3, u_1, u_3$ be arbitrary vectors in $\R^n$. Let $q > 0$, $p > 0$, and $r \ne -\frac{1}{2}$ be real numbers. 

If $u_3 = x_1 - y_2 + \frac{1}{q+1}(x_0 - x_1) - x_3$, then the following identity holds: 
\begin{nalign}\label{eq:identity proj-EAG}
    &\frac{q(q+1)}{2} \cdot \InNorms{y_1 + u_1}^2 + q\cdot \InAngles{y_1+u_1, x_1 - x_0} \\
    &- \InParentheses{ \frac{(q+1)(q+2)}{2} \cdot \InNorms{y_3 + u_3}^2 + (q+1)\cdot \InAngles{y_3+u_3, x_3 - x_0}}\\
    &-\frac{q(q+1)}{2p} \cdot \InParentheses{p \cdot \InNorms{x_2 - x_3}^2 - \InNorms{y_2 - y_3}^2} \\
    & - q(q+1) \cdot \InAngles{y_3 - y_1, x_3 - x_1} \\
    &-q(q+1) \cdot \InAngles{x_1 - y_1 - x_2 + \frac{1}{q+1}(x_0 - x_1), x_2 - x_3} \\
    & -q(q+1) \cdot \InAngles{u_3, x_3 - x_1} \\
    & -q(q+1) \cdot \InAngles{u_1, x_1 - x_2} \\
    =& \frac{q(q+1)}{2} \cdot \InNorms{x_2 -x_1 + y_1 + u_1 + \frac{1}{q+1}(x_1 - x_0)}^2 \\
    & + \frac{(1-p)q(q+1)}{2p} \cdot \InNorms{y_2 - y_3}^2 \\
    & + (q+1) \cdot \InAngles{y_2 - y_3, y_3 + u_3 }
\end{nalign}

If $u_1 = x_1 - y_1 + \frac{1}{q+1}(x_0 - x_1) - x_2$ and $u_3 = x_1 - y_2 + \frac{1}{q+1}(x_0 - x_1) - x_3$, then the following identity holds:
\begin{nalign}\label{eq:identity composite-EAG}
    &\frac{q(q+1)}{2} \cdot \InNorms{y_1 + u_1}^2 + q\cdot \InAngles{y_1+u_1, x_1 - x_0} \\
    &- \InParentheses{ \frac{(q+1)(q+2)}{2} \cdot \InNorms{y_3 + u_3}^2 + (q+1)\cdot \InAngles{y_3+u_3, x_3 - x_0}}\\
    &-\frac{q(q+1)}{2p} \cdot \InParentheses{p \cdot \InNorms{x_2 - x_3}^2 - \InNorms{y_2 - y_3}^2} \\
    & - q(q+1) \cdot \InAngles{y_3 + u_3
    - y_1 - u_1, x_3 - x_1} \\
    =& \frac{(1-p)q(q+1)}{2p} \cdot \InNorms{y_2 - y_3}^2 \\
    & + (q+1) \cdot \InAngles{y_2 - y_3, y_3 + u_3 }
\end{nalign}

If $u_1 = \frac{x_1 + \frac{1}{q+1}(x_0 - x_1) - \frac{q}{q+1}(1+2r)y_1 - x_2}{\frac{q}{q+1}(1+2r)}$ and $u_3 = x_1 +  \frac{1}{q+1}(x_0 - x_1) - y_2 - \frac{2rq}{q+1}(y_1 + u_1) - x_3$, then the following identity holds:
\begin{nalign}\label{identity AS}
    &\InParentheses{\frac{q^2}{2}(1+2r)-rq}\cdot \InNorms{y_1 + u_1}^2 + q \cdot \InAngles{y_1 + u_1, x_1 - x_0}  \\
    & - \InParentheses{\frac{(q+1)^2}{2}(1+2r)-r(q+1)}\cdot \InNorms{y_3 + u_3}^2 - (q+1) \cdot \InAngles{y_3 + u_3, x_3 - x_0} \\
    & - \frac{(q+1)^2}{2} \cdot \InParentheses{ p \cdot \InNorms{x_2 - x_3}^2 - \InNorms{y_2 - y_3}^2} \\
    & - q(q+1) \cdot \InParentheses{\InAngles{y_3 + u_3 - y_1 - u_1, x_3 - x_1} - r \InNorms{y_3 + u_3 - y_1 - u_1}^2} \\
    = & \frac{(1-p)(q+1)^2}{2} \cdot \InNorms{x_2 - x_3}^2.
\end{nalign}

\end{proposition}
\begin{proof}
 We verify the three identities using {\sc Matlab}. Readers can find the verification code at \url{https://github.com/weiqiangzheng1999/Accelerated-Comonotone-Inclusion}. 
\end{proof}

\begin{proposition}\label{prop:sequence analysis}
Let $\{a_k \in \R^{+}\}_{k\ge 2}$ be a sequence of real numbers. Let $C_1 \ge 0$ and $p \in (0, \frac{1}{3})$ be two real numbers. If the following condition holds for every $k\geq 2$,
\begin{align}\label{eq:sequence condition}
    \frac{k^2}{4} \cdot a_k \le C_1 + \frac{p}{1-p} \cdot \sum_{t=2}^{k-1} a_t,
\end{align}
then for each $k\geq 2$ we have 
\begin{align}\label{eq:induction assumption}
    a_k \le \frac{4\cdot C_1}{1-3p} \cdot \frac{1}{k^2}.
\end{align}

\end{proposition}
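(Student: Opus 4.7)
The plan is to prove Inequality~\eqref{eq:induction assumption} by strong induction on $k \ge 2$, using at step $k$ the hypothesis $a_t \le \frac{4C_1}{(1-3p)t^2}$ for every $t \in \{2,\ldots,k-1\}$. The base case $k=2$ is immediate: the sum in~\eqref{eq:sequence condition} is empty, so $a_2 \le C_1$, and since $p \in (0,1/3)$ yields $\frac{1}{1-3p} > 1$, we conclude $a_2 \le \frac{C_1}{1-3p} = \frac{4C_1}{(1-3p)\cdot 2^2}$, matching the claim.

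For the inductive step, substitute the hypothesis into~\eqref{eq:sequence condition} to obtain
\[
    \frac{k^2}{4}\, a_k \;\le\; C_1 \;+\; \frac{4pC_1}{(1-p)(1-3p)} \sum_{t=2}^{k-1} \frac{1}{t^2}.
\]
The desired conclusion $a_k \le \frac{4C_1}{(1-3p)k^2}$ is equivalent, after elementary rearrangement, to the clean inequality $\sum_{t=2}^{k-1} \frac{1}{t^2} \le \frac{3(1-p)}{4}$. Here is where the assumption $p < 1/3$ must enter quantitatively: the threshold $\frac{3(1-p)}{4}$ lies in $(1/2, 3/4)$ for such $p$.

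The hard part will be that the elementary bound $\sum_{t=2}^{\infty}\frac{1}{t^2} = \pi^2/6 - 1 \approx 0.645$ only verifies the required inequality for roughly $p \lesssim 0.14$, not for the full range $(0,1/3)$. To bridge this gap my plan is to track the running quantity $L_k := C_1 + \frac{p}{1-p}\sum_{t=2}^{k-1} a_t$. Since the recursion itself gives $a_k \le \frac{4L_k}{k^2}$, one obtains the multiplicative recurrence $L_{k+1} \le L_k\bigl(1 + \frac{4p}{(1-p)k^2}\bigr)$, which iterates to $L_k \le C_1\prod_{j=2}^{k-1}\bigl(1+\frac{4p}{(1-p)j^2}\bigr)$. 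The proposition then reduces to the clean product estimate $\prod_{j=2}^{\infty}\bigl(1+\frac{4p}{(1-p)j^2}\bigr) \le \frac{1}{1-3p}$ for $p \in [0,1/3)$. I would verify this last inequality either via Euler's identity $\prod_{j=1}^{\infty}(1+y^2/j^2) = \frac{\sinh(\pi y)}{\pi y}$ applied with $y = 2\sqrt{p/(1-p)}$ (matching the leading-order behaviour at $p=0$ and observing that the right-hand side diverges as $p \to 1/3$), or by a direct monotonicity argument comparing derivatives in $p$ on $(0,1/3)$.
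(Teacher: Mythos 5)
Your proposal has a genuine gap --- it is a sketch with an acknowledged hole, not a complete proof --- and it also takes a genuinely different route from the paper's. You correctly diagnose why the naive strong induction stalls: plugging the hypothesis directly into \eqref{eq:sequence condition} and using $\sum_{t\ge 2}t^{-2}=\pi^2/6-1\approx 0.645$ forces the requirement $\sum_{t=2}^{k-1}t^{-2}\le\tfrac{3(1-p)}{4}$, which fails once $p$ exceeds roughly $0.14$. Your pivot to the running quantity $L_k$, the multiplicative recurrence $L_{k+1}\le L_k\bigl(1+\tfrac{4p}{(1-p)k^2}\bigr)$, and the reduction to the product bound $\prod_{j\ge 2}\bigl(1+\tfrac{4p}{(1-p)j^2}\bigr)\le\tfrac{1}{1-3p}$ is sound algebra, and the product bound is in fact true --- it even follows from the crude estimate $\log\prod\le\tfrac{4p}{1-p}(\pi^2/6-1)$ together with the single-variable inequality $\tfrac{4p(\pi^2/6-1)}{1-p}\le-\log(1-3p)$, which can be verified by differentiating and noting that the quadratic $3(1-p)^2-4(\pi^2/6-1)(1-3p)$ has negative discriminant since $\pi^2/2-5<0$. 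But you only gesture at Euler's product or an unstated monotonicity argument, so as written the key quantitative step is missing.

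The paper avoids all of this with one small move you did not consider: before invoking the inductive hypothesis, peel off the $t=2$ term and bound it by $a_2\le C_1$ (which is just \eqref{eq:sequence condition} at $k=2$). This yields $\tfrac{k^2}{4}a_k\le\tfrac{C_1}{1-p}+\tfrac{p}{1-p}\sum_{t=3}^{k-1}a_t$; applying the hypothesis only to $t\ge 3$ and rearranging, the condition one needs becomes simply $\sum_{t=3}^{k-1}t^{-2}\le\tfrac{1}{2}$, which is $p$-independent and holds with room to spare since $\sum_{t\ge 3}t^{-2}=\pi^2/6-5/4\approx 0.395$. The constants then close exactly: $\tfrac{C_1}{1-p}+\tfrac{2pC_1}{(1-p)(1-3p)}=\tfrac{C_1}{1-3p}$. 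In short, the ``hard part'' you identified dissolves once the first summand is handled separately, making the paper's argument both complete and substantially shorter than the route you outline.
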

\begin{proof}
We prove the statement by induction.

\noindent{\bf Base Case: $k = 2$. } From Inequality~\eqref{eq:sequence condition}, we have 
\begin{align*}
    \frac{2^2}{4} \cdot a_2 \le C_1\quad \Rightarrow \quad a_2 \le C_1 \le \frac{4\cdot C_1}{1-3p} \cdot \frac{1}{2^2}.
\end{align*}
Thus, Inequality~\eqref{eq:induction assumption} holds for $k= 2$. 

\noindent{\bf Inductive Step: for any $k \ge 3$. } 
Fix some $k \ge 3$ and assume that Inequality~\eqref{eq:induction assumption} holds for all $2 \le t \le k -1$. We slightly abuse notation and treat the summation in the form $\sum_{t=3}^{2}$ as 0.
By Inequality~\eqref{eq:sequence condition}, we have
\begin{align*}
\frac{k^2}{4}\cdot a_k &\le C_1 + \frac{p}{1-p} \cdot \sum_{t = 2}^{k-1} a_t \\ 
&\le \frac{C_1}{1-p} +  \frac{p}{1-p} \cdot \sum_{t = 3}^{k-1} a_t \tag{$a_2 \le C_1$}\\
& \le \frac{C_1}{1-p} +  \frac{4p\cdot C_1 }{(1-p)(1-3p)} \cdot \sum_{t = 3}^{k-1} \frac{1}{t^2} \tag{Induction assumption on Inequality~\eqref{eq:induction assumption}}\\
& \le \frac{C_1}{1-p} +  \frac{2p\cdot C_1 }{(1-p)(1-3p)} \tag{$\sum_{t=3}^\infty \frac{1}{t^2} = \frac{\pi^2}{6}-\frac{5}{4} \le \frac{1}{2}$} \\
& = \frac{C_1}{1-3p}.
\end{align*}
This complete the inductive step. Therefore, for all $k\ge 2$, we have $a_k \le \frac{4\cdot C_1}{1-3p} \cdot \frac{1}{k^2}$.
\end{proof}

\begin{proposition}\label{prop:1/2 sequence analysis}
Let $\{a_k \in \R^{+}\}_{k\ge 2}$ be a sequence of real numbers and $C_1 \ge 0$. If the following condition holds for every $k\geq 2$,
\begin{align}\label{eq:1/2 sequence condition}
    \frac{k^2}{4} \cdot a_k \le C_1 + \frac{1}{2} \cdot \sum_{t=2}^{k-1} a_t,
\end{align}
then for each $k\geq 2$ we have 
\begin{align}\label{eq:1/2 induction assumption}
    a_k \le \frac{20C_1}{k^2}
\end{align}

\end{proposition}
\begin{proof}
We prove the statement by induction.

\noindent{\bf Base Case: $k = 2, 3, 4$. } From Inequality~\eqref{eq:1/2 sequence condition}, we can directly calculate that $a_2, a_3, a_4 \le C_1$.
Thus, Inequality~\eqref{eq:1/2 induction assumption} holds for $k= 2, 3,4$. 

\noindent{\bf Inductive Step: for any $k \ge 5$. } 
Fix some $k \ge 5$ and assume that Inequality~\eqref{eq:1/2 induction assumption} holds for all $2 \le t \le k -1$. We slightly abuse notation and treat the summation in the form $\sum_{t=5}^{4}$ as 0.
By Inequality~\eqref{eq:1/2 sequence condition}, we have
\begin{align*}
\frac{k^2}{4}\cdot a_k &\le C_1 +\frac{1}{2}\cdot \sum_{t = 2}^{k-1} a_t \\ 
&\le \frac{5}{2}C_1 +  \frac{1}{2} \cdot \sum_{t = 5}^{k-1} a_t \tag{$a_2, a_3, a_4 \le C_1$}\\
& \le \frac{5}{2}C_1 +  10C_1 \cdot \sum_{t = 5}^{k-1} \frac{1}{t^2} \tag{Induction assumption on Inequality~\eqref{eq:induction assumption}}\\
& \le 5C_1. \tag{$\sum_{t=5}^\infty \frac{1}{t^2} =  \le \frac{1}{4}$} \\
\end{align*}
This complete the inductive step. Therefore, for all $k\ge 2$, we have $a_k \le \frac{20C_1}{k^2}$.
\end{proof}

\end{document}